\newtheorem{definition}{Definition}
\newtheorem{lemma}{Lemma}
\newtheorem{theorem}{Theorem}
\newtheorem{corollary}{Corollary}
\tikzset{
point/.style={circle,draw=black,inner sep=0pt,minimum size=3pt}
}
\pgfplotsset{
    soldot/.style={color=blue,only marks,mark=*}
    }
\author{}
\pgfplotsset{compat=1.14}
\renewcommand{\[}{\begin{equation}}
\renewcommand{\]}{\end{equation}}
\newcommand{\T}{\mathcal{T}}
\renewcommand{\r}{\mathbf{r}}
\newcommand{\cC}{\mathcal{C}}
\renewcommand{\H}{\mathcal{H}}
\newcommand{\X}{\mathcal{X}}
\renewcommand{\S}{\mathcal{S}}
\newcommand{\HH}{\mathbb{H}}
\newcommand{\cu}{\check{u}}
\newcommand{\VV}{\mathbb{V}}
\newcommand{\WW}{\mathbb{W}}
\newcommand{\diag}{\mathrm{diag}}
\renewcommand{\v}{\mathbf{v}}
\renewcommand{\u}{\mathbf{u}}
\newcommand{\cuu}{\check{\mathbf{u}}}
\newcommand{\pp}{\mathbf{p}}
\newcommand{\Id}{\mathrm{Id}}
\newcommand{\z}{\mathbf{z}}
\newcommand{\cI}{\check{I}}
\newcommand{\calpha}{\check{\alpha}}
\newcommand{\cV}{\V^m_{\calpha,r}(\cI)}
\newcommand{\cP}{\check{P}^m_{\calpha,r}}
\newcommand{\cpi}{\check{\pi}^m_{\calpha,r}}
\newcommand{\cp}{\check{\varphi}}
\newcommand{\cpsi}{\check{\psi}}
\newcommand{\ch}{\check{h}}
\newcommand{\cO}{\mathcal{O}^{m}_{\calpha,w,r}}
\newcommand{\cv}{\check{v}}
\newcommand{\cPi}{\check{\Pi}^m_{\calpha,\r}}
\newcommand{\cVV}{\VV^{m}_{\calpha,\r}(\cI)}
\newcommand{\Ik}{I_{k_0}}
\newcommand{\Vk}{\V^m_{\alpha,r}(\Ik)}
\renewcommand{\Pr}{P^m_{\alpha,r}}
\newcommand{\cQ}{\mathcal{Q}^{m}_{\calpha,w,r}(\cI)}
\newcommand{\cQQ}{\mathbb{Q}^{m}_{\calpha,S,\r}(\cI)}
\newcommand{\cL}{\check{\mathscr{L}}^{m}_{\calpha,r}}
\newcommand{\ucl}{\tilde{u}}
\newcommand{\cw}{\check{w}}
\newcommand{\w}{\mathbf{w}}
 \newcommand{\bb}[1]{\left\llbracket#1\right\rrbracket}
\newcommand{\tA}{\tilde{A}}
\newcommand{\C}{\tilde{C}}
\renewcommand{\c}{\mathbf{c}}
\newcommand{\g}{\mathbf{g}}
\newcommand{\Ru}{\mathcal{R}\u}
\renewcommand{\L}{\mathscr{L}}
\newcommand{\norm}[1]{\left\lVert#1\right\rVert}
\newcommand{\abs}[1]{\left|#1\right|}
\renewcommand{\C}{\mathcal{C}}
\newcommand{\N}{\mathcal{N}}
\newcommand{\M}{\mathcal{M}}
\renewcommand{\P}{\mathcal{P}}
\newcommand{\V}{\mathcal{V}}
\newcommand{\p}{\varphi}
\newcommand{\e}{\mathbf{e}}
\newcommand{\q}{\mathbf{q}}
\newcommand{\Ep}{\mathcal{E}^{m,+}_{\calpha,r}}
\newcommand{\Es}{\mathcal{E}^{m,s}_{\calpha,r}}
\newcommand{\Esp}{\mathcal{E}^{m,s'}_{\calpha,r}}
\newcommand{\ddx}{\frac{\mathrm{d}}{\mathrm{d}x}}
\newcommand{\hpm}{\hat{\p}_-}
\title{A Unified Immersed Finite Element Error Analysis \\ for One-Dimensional Interface Problems}
\newcommand{\comment}[1]{}
\newcommand{\commentout}[1]{{}} 
\author{Slimane Adjerid\thanks{adjerids@math.vt.edu} \and Tao Lin\thanks{tlin@vt.edu} 
\and Haroun Meghaichi\thanks{haroun@vt.edu}}
\date{Department of Mathematics, Virginia Tech
}
\begin{document}

\maketitle

\begin{abstract}

    It has been noted that the traditional scaling argument cannot be directly applied to the error analysis of immersed finite elements (IFE) because, in general, the spaces on the reference element associated with the IFE spaces on different interface elements via the standard affine mapping are not the same. By analyzing a mapping from the involved Sobolev space to the IFE space, this article is able to extend the scaling argument framework to the error estimation for the approximation capability of a class of IFE spaces in one spatial dimension. As demonstrations of the versatility of this unified error analysis framework, the manuscript applies the proposed scaling argument to obtain optimal IFE error estimates for a typical first-order linear hyperbolic interface problem, a second-order elliptic interface problem, and the fourth-order Euler-Bernoulli beam interface problem, respectively.

\end{abstract}

{\bf keywords:}
Immersed finite elements, error analysis, Bramble Hilbert lemma, scaling argument.

\section{Introduction}

Partial differential equations with discontinuous coefficients arise in many areas of sciences and engineering such as heat transfer, acoustics, structural mechanics, and electromagnetism. The discontinuity of the coefficients results in multiple challenges in the design and the analysis of numerical methods and it is an active area of research in the communities of finite element, finite volume, as well as finite difference method.

The immersed finite element (IFE) methods can use an interface independent mesh to solve an interface problem. Many publications were about IFE methods using either linear \cite{liImmersedInterfaceMethod1998,liImmersedFiniteElement2004,liNewCartesianGrid2003b}, bilinear \cite{X.He_T.Lin_Y.Lin,linRectangularImmersedFinite2001}, or trilinear polynomials \cite{guoImmersedFiniteElement2020, SVallaghe_TPapadopoulo_TriLinear_IFE}. IFE methods have been applied to a variety of problems, such as parabolic interface problems \cite{heImmersedFiniteElement2013,linImmersedFiniteElement2013,linOptimalErrorBounds2020}, hyperbolic interface problems \cite{adjeridErrorEstimatesImmersed2020}, the acoustic interface problem \cite{adjeridHigherOrderImmersed2014,moonImmersedDiscontinuousGalerkin2016} and Stokes and Navier-Stokes interface problems \cite{adjeridImmersedDiscontinuousFinite2019,2021ChengZhang,2021JonesZhang,2022WangZhangZhuang}. IFE methods with higher degree polynomials have also been explored \cite{adjeridHigherDegreeImmersed2018,
adjeridHIGHDEGREEIMMERSED2017,
adjeridPthDegreeImmersed2009,
2021ChengZhang,
guoHigherDegreeImmersed2019,
2019LinLinZhuan_Helmholtz1}. In particular, Adjerid and Lin \cite{adjeridPthDegreeImmersed2009} constructed IFE spaces of arbitrary degree and analyzed their approximation capabilities.

In \cite{adjeridHigherOrderImmersed2014,moonImmersedDiscontinuousGalerkin2016}, Adjerid and Moon discussed IFE methods for the following acoustic interface problem
\begin{equation}\begin{cases}
p_t(x,t)=\rho(x) c(x)^2 v_x(x,t),& x\in(a,\alpha)\cup (\alpha,b),\\
\rho(x) v_t(x,t)=p_x(x,t),&x\in(a,\alpha)\cup (\alpha,b),\\
[v]_{x=\alpha}=[p]_{x=\alpha}=0.
\end{cases}\label{eqn:acoustic_acoustic_problem} \end{equation}
where $\rho,c$ are equal to $\rho_+,c_+$ on interval $(\alpha,b)$ and to $\rho_-,c_-$ on $(a,\alpha)$. Assuming that the exact solution $(u, p)$ has sufficient regularity in $(a, \alpha)$ and $(\alpha, b)$, respectively, we can follow the idea in
\cite{Lombard_Piraux_2001_1D} to show that the exact solution satisfies the following so-called extended jump conditions:
\begin{equation}\frac{\partial^k}{\partial x^k} p(\alpha^+,t)=r^p_k
\frac{\partial^k}{\partial x^k} p(\alpha^-,t),\qquad \frac{\partial^k}{\partial x^k} v(\alpha^+,t)=r^v_k
\frac{\partial^k}{\partial x^k} v(\alpha^-,t),\qquad k=0,1,\dots,\label{eqn:extended_conditions_acoustic}
\end{equation}
for certain positive constants $r^p_k$ and $r^v_k$. In \cite{adjeridHigherOrderImmersed2014,moonImmersedDiscontinuousGalerkin2016}, IFE spaces based on polynomials of degree up to $4$ were developed with these extended jump conditions, and these IFE spaces were used with a discontinuous Galerkin (DG) method to solve the above acoustic interface problem with pertinent initial and boundary conditions. Numerical examples presented in \cite{adjeridHigherOrderImmersed2014,moonImmersedDiscontinuousGalerkin2016} demonstrated the optimal convergence of this DG IFE method, but we have not seen any error analysis about it in the related literature.

Extended jump conditions have also been used in the development of higher degree IFE spaces for solving other interface problems
\cite{adjeridHigherDegreeImmersed2018,
adjeridHIGHDEGREEIMMERSED2017,
adjeridPthDegreeImmersed2009,
2021ChengZhang,
guoHigherDegreeImmersed2019,
2019LinLinZhuan_Helmholtz1}. This motivates us to look for a unified framework for the error analysis for methods based on IFE spaces constructed with the extended jump conditions such as those in \eqref{eqn:extended_conditions_acoustic}. As an initial effort, our focus here is on one-dimensional interface problems.

One challenge in error estimation for IFE methods is that the scaling argument commonly used in error estimation for traditional finite element methods cannot be directly applied. In the standard scaling argument, local finite element spaces on elements in a sequence of meshes are mapped to the same finite element space on the reference element via an affine transformation. However, the same affine transformation will map the local IFE spaces on interface elements in a sequence of meshes to different IFE spaces on the reference element because of the variation of interface location in the reference element, see the illustration in \autoref{fig:phys_ref}. A straightforward application of the scaling argument to the analysis of the approximation capability of an IFE space will result in error bounds of a form $C(\calpha)h^r$, i.e., the constant factor $C(\calpha)$ in the derived
error bounds depend on the location of the interface in the reference element, and this kind of error bounds cannot be used to show the convergence of the related IFE method unless one can show that the constant factor $C(\calpha)$ is bounded for all $\calpha$ in the reference element, which, to our best knowledge, is difficult to establish. Alternative analysis techniques such as multi-point Taylor expansions are used \cite{adjeridPthDegreeImmersed2009} which becomes awkward for higher degree IFE spaces, particularly so for higher degree IFE spaces in higher dimension. To circumvent this predicament of the classical scaling argument, we introduce a mapping between the related Sobolev space and the IFE space by using weighted averages
of the derivatives in terms of the coefficients in the jump conditions. We show that the Sobolev norm of the error of this mapping can be bounded by the related Sobolev semi-norm. This essential property enables us to establish a Bramble-Hilbert type lemma for the IFE spaces, and, to our best knowledge, this is the first result that makes the scaling argument applicable in the error analysis of a class of IFE methods. For demonstrating the versatility of this unified error analysis framework, we apply it to establish, for the first time,  the optimal approximation capability of the IFE space designed for the acoustic interface problem \eqref{eqn:acoustic_acoustic_problem}. Similarly, we apply this immersed scaling argument to the IFE space designed for an elliptic interface problem considered in \cite{adjeridPthDegreeImmersed2009} as well as the IFE space for the Euler-Bernoulli Beam interface problem considered in  \cite{linErrorAnalysisImmersed2017,linImmersedFiniteElement2011,wangHermiteCubicImmersed2005} leading to much simpler and elegant proofs.

\commentout{
        In this paper, we provide a detailed proof for the convergence rate of the IFE-DG scheme for a general class of problems including the acoustic interface problem \eqref{eqn:acoustic_acoustic_problem}.  The novelty here is the use of the scaling argument, that is, we construct our IFE spaces and local projection operators on the reference element $[0,1]$ and map them to the physical element that contains the interface point. To our knowledge, the scaling argument, although being a classical tool in FEM, has not been used before to analyze the properties of immersed finite element method. The main concern is that $\calpha$, the  relative position of the interface point in the interface element changes when the mesh is refined as \autoref{fig:phys_ref} shows because of different relative positions of the interface point $\alpha$ in interface elements.
}

\begin{figure}
    \centering
    \includegraphics{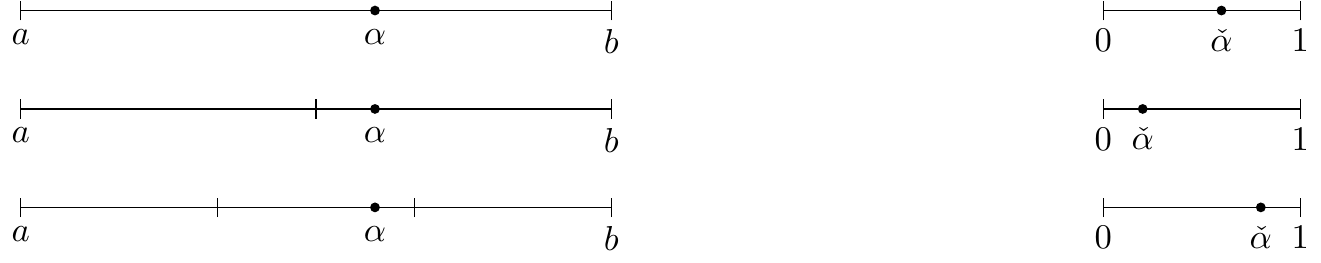}
    \caption{The relative position of the interface (on the right) changes as the we refine the mesh (on the left).}
    \label{fig:phys_ref}
\end{figure}

\commentout{
      In order for the scaling argument to yield an optimal convergence rate, we need to show that our estimates are independent of $\calpha$, namely, we have to prove that the necessary projection operators norms are bounded uniformly for all $\calpha\in(0,1)$. In addition to that, we need an immersed version of the Bramble-Hilbert lemma for functions that have jump discontinuities of the form \eqref{eqn:extended_conditions_acoustic}. This constitutes a sizeable overhead in the analysis. However, it makes the use of the classical techniques, such as Radau projection, Lobatto projection and Céa's lemma applicable to interface problems with some minor changes. Furthermore, our general approach applies to the elliptic interface problem $-(\beta u')'=f$, where $\beta$ is piecewise constant, and to the parabolic interface problem $u_t=(\beta u')'$ fairly easily.
}
  The paper is organized as follows. In \autoref{sec:notation}, we introduce the notation and spaces used in the rest of the paper. In \autoref{sec:properties_of_v}, we restrict ourselves to study of the IFE  functions on the interval $[0,1]$, we show that they have similar properties to polynomials, for example, they both have the same maximum number of roots, they both admit a Lagrange basis and they both satisfy an inverse and a trace inequality. In \autoref{sec:Bramble_Hilbert}, we define the notion of uniformly bounded RIFE operators and how the scaling argument is applicable using an immersed Bramble-Hilbert lemma. In \autoref{sec:one_D_class}, we study the convergence of the DG-IFE method for the acoustic interface problem \eqref{eqn:acoustic_acoustic_problem}.
  In \autoref{sec:Already_established_results}, we give shorter and simpler
  proofs for the optimal convergence of IFE methods for the second-order elliptic interface problem as well as the
  fourth-order Euler-Bernoulli beam interface problem.

\section{Preliminaries}
\label{sec:notation}

Throughout the article, we will consider a bounded open interval $I=(a,b)$ with $\abs{a}, \abs{b} < \infty$, and let $\alpha\in I$ be  the interface point dividing $I$ into two open intervals $I^{-}=(a,\alpha), I^+=(\alpha,b)$. This convention extends to any other open interval $B\subseteq \mathbb{R}$ with $B^-=B\cap(-\infty,\alpha)$ and $B^+=B\cap(\alpha,\infty)$. For every bounded open interval $B$ not containing $\alpha$, let $W^{m,p}(B)$ be the usual Sobolev space on $B$ equipped with the norm $\norm{\cdot}_{m,p,B}$ and the seminorm $|\cdot|_{m,p,B}$. We are particularly interested in the case of $p=2$ corresponding to the Hilbert space $H^m(B)=W^{m,2}(B)$, and we will use $\norm{\cdot}_{m,B}$ and $|\cdot|_{m,B}$ to denote $\norm{\cdot}_{m,2,B}, |\cdot|_{m,2,B}$ respectively for convenience. We will use $(\cdot,\cdot)_{B}$ and $(\cdot,\cdot)_{w,B}$ to denote the classical and the weighted $L^2$ inner product defined as
$$(f,g)_{B}= \int_{B}f(x)g(x)\ dx,\qquad  (f,g)_{w,B}= \int_{B}w(x)f(x)g(x)\ dx,\qquad w(x)>0,\ \forall\ x\in B. $$
Given a positive finite sequence $r=(r_i)_{i=0}^{\tilde m}, ~{\tilde m} \geq 0$
and an open interval $B$ containing $\alpha$, we introduce the following piecewise Sobolev space:
\begin{align}
\H^{m+1}_{\alpha,r}(B)&=\left\{u\mid u_{\mid B^\pm}\in H^{m{+1}}(B^{\pm}), u^{(k)}(\alpha^+)=r_k u^{(k)}(\alpha^-),\ \forall k=0,1,\dots,m\right\}, ~0 \leq m \leq {\tilde m}. \label{eqn:defintion_of_H}
\end{align}
\comment{ We need $u\in H^{k+1}(B^{\pm})$ in order for the trace $u^{(k)}(\alpha^{\pm})$ to exist according to the trace theorem. For example, take $u(x)=x^{2/3}$ on $[0,1]$, we have $u\in H^1([0,1])$ and $u'(0^{+})=\infty$.
}
\comment{
The two spaces are the same in the following sense: $H^{m+1}(B^\pm)\hookrightarrow C^m(B^\pm)$. Therefore, given $(u,u^+)\in H^{m+1}(B^-)\times H^{m+1}(B^+)$, there is $(v^-,v^+)\in C^{m}(B^-)\times C^{m}(B^+)$ such that $u^\pm=v^\pm$ a.e. Furthermore, the trace theorem guarantees that $v^{\pm}(\alpha^\mp)=\text{Tr}[u^{\pm}](\alpha^\pm)$.
}
The norms, semi-norms and the inner product that we will use on $\H^{m+1}_{\alpha,r}(B)$ are 
$$
 \norm{\cdot}_{m+1,B}=\sqrt{\norm{\cdot}_{m+1,B^-}^2+\norm{\cdot}_{m+1,B^+}^2},
~~\abs{\cdot}_{m,B}=\sqrt{\abs{\cdot}_{m,B^-}^2+\abs{\cdot}_{m,B^+}^2}, ~~\left(f,g\right)_{w,B}=\left(f,g\right)_{w,B^-}+\left(f,g\right)_{w,B^+}.
$$
We note, by the Sobolev embedding theory, that $\H^{m+1}_{\alpha,r}(B)$ is a subspace of
\begin{align}
\C^{m}_{\alpha,r}(\overline{B})&=\left\{u\mid u_{\mid B^\pm}\in C^{m}(\overline{B^{\pm}}), u^{(k)}(\alpha^+)=r_k u^{(k)}(\alpha^-),\ \forall k=0,1,\dots,m\right\}, ~0 \leq m \leq {\tilde m}. \label{eqn:defintion_of_C}\
\end{align}

\commentout{
 Throughout the paper, we will use $u'$ to denote the piecewise derivative of $u$, i.e., given $u\in \X^m_{\alpha,r}(B)$, $u_{\mid B^\pm}'$ is the classical derivative of $u_{\mid B^\pm}$ and it should not be confused with the distributional derivative of $u$:
\begin{equation}
    u'(x)=\begin{cases}\frac{du}{dx}(x),& x\in B\backslash\{\alpha\},\\
    \frac{du}{dx}(\alpha^-),& x= \alpha.
    \end{cases}
    \label{eqn:definition_of_derivative}
\end{equation}
}
By dividing $I$ into $N$ sub-intervals, we obtain the following partition of $I$:

$$I_k=(x_{k-1},x_k),\quad \T_h=\{I_k\}_{k=1}^N,\quad a=x_0<x_1<\dots<x_N=b,\qquad h=\max_{1\le k\le N}(x_{k}-x_{k-1}).$$
We will assume that there is $k_0\in\{1,2,\dots,N\}$ such that $x_{k_0-1}<\alpha<x_{k_0}$, which is equivalent to $\alpha\in \overset{\circ}{I}_{k_0}$. We define the discontinuous immersed finite element space $W^m_{\alpha,r}$ on the interval $I$ as

\begin{equation}{\cal W}^m_{\alpha,r}(\T_h)=\left\{\p\mid \p_{\mid{I_k}}\in\P^m(I_k) \text{ for } k\in\{1,\dots,N\}\backslash\{k_0\} \text{ and } \p_{\mid I_{k_0}}\in \V^m_{\alpha,r}(\Ik)\right\},\label{eqn:global_IFE_space}\end{equation}
where $\P^m(I_k)$ is the space of polynomials of degree at most $m$ on $I_k$ and $\V^m_{\alpha,r}(I_{k_0})$ is the local immersed finite element (\textit{LIFE}) space defined as:
\begin{align}
\V^m_{\alpha,r}(\Ik)= \left\{\varphi \in \C^{m}_{\alpha,r}(\Ik) \mid \p_{\mid \Ik^s}\in \P^m\left(\Ik^s\right),\ s=+,-\right\}, ~0 \leq m \leq {\tilde m}.
\end{align}

In discussions from now on, given a function $v$ in $\H^{m+1}_{\alpha,r}$ (or $\cC^{m}_{\alpha,r}$ or $\V^m_{\alpha,r}$), its derivative is understood in the piecewise sense unless specified otherwise. By definition, we can readily verify that $\V^{m-1}_{\alpha,r}(\Ik)\subset\V^{m}_{\alpha,r}(\Ik)$ for
a given finite sequence $r=(r_i)_{i=0}^{\tilde m}, ~{\tilde m} \geq 1$.
In order to  study the LIFE space $\V^m_{\alpha,r}(\Ik)$, we will investigate the properties of the
corresponding reference IFE (\textit{RIFE}) space $\V^m_{\calpha,r}(\cI)$ on the reference interval $\cI=[0,1]$ with an interface point $\calpha\in(0,1)$. Our goal is to extend the scaling argument to
such IFE spaces and use the IFE scaling argument to show IFE spaces such as $\V^m_{\alpha,r}(\Ik)$
have the optimal approximation capability, i.e., every function in $\H^{m+1}_{\alpha,r}(\Ik)$ can be approximated by functions from the IFE space $\V^m_{\alpha,r}(\Ik)$ at the  optimal convergence rate.

Following the convention in the error analysis literature for finite element methods, we will often use a generic constant $C$ in estimates whose value varies depending on the context, but this generic constant is independent of $h$ and the interface $\alpha \in I_{k_0}$ or $\calpha \in \cI$ unless otherwise declared.

\section{Properties of the RIFE space \texorpdfstring{$\cV$}{VI}}
\label{sec:properties_of_v}

\commentout{
        In this section, we will present some properties of the RIFE space $\V^m_{\calpha,r}(\cI)$ that will be used to establish the approximation capability of the space and the existence and approximation capability of the immersed Radau projection that will be a key tool for proving  the optimal convergence rate of the immersed DG scheme for the kinematic wave equation with discontinuous coefficients as well as the 1-D acoustic wave equation with discontinuous coefficients.
}
For a given function $\cp\in\V^m_{\calpha,r}(\cI)$, we will write $\cp=(\cp_-,\cp_+)$ where $\cp_{s}=\cp_{\mid\cI^s}\in \P^m(\cI^s)$ for $s=+,-$.   Additionally, we will use $\cp_{s}^{(k)}(\calpha)$ to denote $\lim_{x\to \calpha^s}\cp_s^{(k)}(x), ~s = \pm$ for a given integer $k\ge 0$. For clarity, we will use $s'$ to denote the dual of $s$, i.e., if $s=\pm$, then $s'=\mp$.

\begin{lemma}
\label{lem:Dimension and uniqueness of extensions} Let ${\tilde m} \geq m\ge 0, \{r_k\}_{k=0}^{\tilde m} \subset\mathbb{R}_+, \calpha\in (0,1)$ and $s\in\{+,-\}$. The following statements hold
\begin{enumerate}
    \item For every $\cp_{s}\in \P^m(\cI^s)$ there is a unique $\cp_{s'} \in \P^m(\cI^{s'})$ such that $\cp=(\cp_-,\cp_+)\in\cV$.

 \item The dimension of $\V^m_{\calpha,r}(\cI)$ is $m+1$.

 \item The set $\{\N^k_{\calpha,r}\}_{k=0}^m$, where
   \begin{equation}\N^k_{\calpha,r}(x)=\begin{cases}
    (x-\calpha)^k,& x\in \cI^-,\\
    r_k(x-\calpha)^k,& x\in \cI^+,
    \end{cases}\label{eqn:definition_of_canonical_basis}\end{equation}
    forms a basis of $\V^m_{\calpha,r}(\cI)$ and will be referred to as the canonical basis.
    \end{enumerate}
\end{lemma}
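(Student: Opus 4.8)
The plan is to rest the entire lemma on one elementary fact: a polynomial of degree at most $m$ is completely determined by its Taylor data $\bigl(\cp^{(0)}(\calpha),\dots,\cp^{(m)}(\calpha)\bigr)$ at the interface point, together with the observation that the jump conditions defining $\cV$ couple the two one-sided Taylor data through the positive scalars $r_0,\dots,r_m$ in an invertible, indeed diagonal, way. For Part 1 I would treat the case $s=-$ (the case $s=+$ being symmetric). Given $\cp_-\in\P^m(\cI^-)$, membership $\cp=(\cp_-,\cp_+)\in\cV$ forces $\cp_+^{(k)}(\calpha)=r_k\,\cp_-^{(k)}(\calpha)$ for $k=0,\dots,m$, so the one-sided Taylor data of the unknown $\cp_+$ is prescribed. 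Since $\deg \cp_+\le m$, I would simply set
$$\cp_+(x)=\sum_{k=0}^m \frac{r_k\,\cp_-^{(k)}(\calpha)}{k!}\,(x-\calpha)^k,$$
which is the unique degree-$\le m$ polynomial with the required derivatives at $\calpha$; existence and uniqueness then follow from the invertibility of the Taylor-coefficient map on $\P^m$. The case $s=+$ is identical after dividing by $r_k$, which is legitimate precisely because each $r_k>0$.

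Part 2 I would deduce immediately from Part 1. The restriction map $\cp\mapsto\cp_-$ from $\cV$ into $\P^m(\cI^-)$ is linear; Part 1 shows it is surjective, and it is injective because $\cp_-=0$ forces every $\cp_-^{(k)}(\calpha)=0$, hence every $\cp_+^{(k)}(\calpha)=0$ and so $\cp_+=0$. Thus $\cV\cong\P^m(\cI^-)$ and $\dim\cV=m+1$.

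For Part 3, since there are exactly $m+1$ canonical functions and $\dim\cV=m+1$ by Part 2, it suffices to check that each $\N^k_{\calpha,r}\in\cV$ and that the family is linearly independent. Membership is where the diagonal structure pays off: on each side $\N^k_{\calpha,r}$ is a scalar multiple of $(x-\calpha)^k$, so $\bigl(\N^k_{\calpha,r}\bigr)_s^{(j)}(\calpha)$ vanishes unless $j=k$, in which case it equals $k!$ for $s=-$ and $r_k\,k!$ for $s=+$. The jump relation $\cp_+^{(j)}(\calpha)=r_j\,\cp_-^{(j)}(\calpha)$ then holds trivially for $j\ne k$ (both sides zero) and reduces to $r_k\,k!=r_k\cdot k!$ for $j=k$. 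Linear independence follows by restricting $\sum_k c_k\N^k_{\calpha,r}=0$ to $\cI^-$, where it becomes $\sum_k c_k(x-\calpha)^k\equiv 0$, forcing all $c_k=0$.

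I do not expect a genuine obstacle here; the only point requiring care is the use of $r_k>0$ rather than merely $r_k\ne 0$, which guarantees invertibility of the coupling in both directions and is exactly the hypothesis furnished by the statement. The conceptual content is simply that the jump conditions act as an invertible diagonal change of Taylor coordinates between the two sides, which is also why the canonical basis in Part 3 is the natural choice.
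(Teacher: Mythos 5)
Your proof is correct and follows essentially the same route as the paper's: the explicit Taylor expansion $\cp_{s'}(x)=\sum_{k=0}^m \frac{r_k^{\pm 1}\,\cp_{s}^{(k)}(\calpha)}{k!}(x-\calpha)^k$ for Part 1, the restriction isomorphism $\cV\cong\P^m(\cI^-)$ for Part 2, and linear independence via restriction to $\cI^-$ for Part 3. Your only addition is the explicit verification that each $\N^k_{\calpha,r}$ belongs to $\cV$, a step the paper treats as immediate.
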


\begin{proof} We will prove the statements in order:
\begin{enumerate}
\item Let $\cp_{\pm}\in \P^m(\cI^{\pm})$, then  $(\cp_-,\cp_+)\in \cV$ if and only if $$\cp_{\mp}^{(k)}(\calpha)= \left(r_k\right)^{\mp 1} \cp_{\pm}^{(k)}(\calpha),\qquad k=0,1,\dots,m,$$
 which uniquely defines a polynomial $\cp_{\mp}\in \P^m(\cI^\mp)$: $$\cp_{\mp}(x)=\sum_{k=0}^m \frac{\left(r_k\right)^{\mp 1} \cp_{\pm}^{(k)}(\calpha)}{k!} (x-\calpha)^k.$$

\item We have shown that the maps $\cp_-\mapsto \cp$ is well defined and injective which implies that the map $\cp\mapsto \cp_-$ is surjective since every $\cp_-\in \P^m(\cI^-)$ can be extended to $\cp\in\cV$. Hence, $\cV$ is isomorphic to $\P^m(\cI^-)$ implying that the dimension of $\V^m_{\calpha,r}(\cI)$ is $m+1$.

\item We only need to show that $\{\N^k_{\calpha,r}\}_{k=0}^m$ is linearly independent: Assume that $\cp=\sum_{k=0}^mc_k\N^k_{\calpha,r}\equiv 0$, then $\cp_-\equiv 0$ which implies that $c_k=0$ for all $k=0,1,\dots,m$.
\end{enumerate}
\end{proof}

The results in \autoref{lem:Dimension and uniqueness of extensions} allows us to introduce
an extension operator that maps $\cp_s$ to $\cp_{s'}$.

\begin{definition}
Let ${\tilde m} \geq m\ge 0, \{r_k\}_{k=0}^{\tilde m} \subset\mathbb{R}_+, \calpha\in (0,1)$ and $s\in\{+,-\}$. We define the extension operator $\Esp:\P^m(\cI^s)\to\P^m(\cI^{s'})$ that maps every $\cp_{s}\in \P^m(\cI^s)$ to $\Esp(\cp_s)=\cp_{s'}$ such that $(\cp_{-},\cp_+)\in\cV$.
\end{definition}

By \autoref{lem:Dimension and uniqueness of extensions}, the extension operator $\Esp$ is well-defined and is linear. Furthermore, by \autoref{lem:Dimension and uniqueness of extensions} again, this extension operator is also invertible. Consequently, the dimension of the RIFE space is the same as the dimension of the traditional polynomial space of the same degree.



Next, we will estimate the operator norm of $\Esp$. Let $\ch_-=\calpha,\ \ch_+=1-\calpha$ be the lengths of the sub-intervals $\cI^\pm$ formed by $\calpha$. First, let us consider the following example $\cp=(\cp_-,\cp_+)=\N^m_{\calpha,r}$ defined by \eqref{eqn:definition_of_canonical_basis}, we have

$$\norm{\Esp(\cp_-)}_{0,\cI^+}=\norm{\cp_+}_{0,\cI^+}= |r_m|\left(\frac{\ch_+}{\ch_-}\right)^{m}\sqrt{\frac{\ch_+}{\ch_-}}\norm{\cp_-}_{0,\cI^-},\qquad \text{where } \ch_-=\calpha,\ \ch_+=1-\calpha.$$
Hence, if $h_->h_+$, we get $\norm{\cp_+}_{0,\cI^+}\le |r_m|\norm{\cp_-}_{0,\cI^-}$. In the following lemma, we will show that a similar result holds for all $\cp\in \cV$. Consequently, for every interface position $\calpha \in I$, one of the two extension operators $\Esp, ~s' = -, +$ will be bounded independently of $\calpha$.

\begin{lemma}
\label{lem:norm_of_extension}
There exists a constant $C>0$ that depends on $m$ such that for every $(\cp_-,\cp_+)\in \cV$, we have
\begin{equation}\norm{\Esp\cp_s}_{0,\cI^{s'}}=\norm{\cp_{s'}}_{0,\cI^{s'}}\le C \sqrt{\frac{\ch_{s'}}{\ch_{s}}}
    \left(\max_{0\le i\le m}r_i^{{\tilde s}}\right)\max\left(1,\left(\frac{\ch_{s'}}{\ch_{s}}\right)^m\right)\norm{\cp_{s}}_{0,\cI^{s}},\qquad s=+,-,\label{eqn:raw_norm_Emp}
\end{equation}
where ${\tilde s} = {\color{red}\mp}1$ for $s = \pm$.
\comment{
    Notice that I wrote $r_i^{{\tilde s}}$ which means $r_{i}^{\pm 1}$.
}
In particular, if $\ch_s\ge \ch_{s'}$, we have
     \begin{equation}
    \norm{\cp_{s'}}_{0,\cI^{s'}}\le C \sqrt{\ch_{s'}}
    \left(\max_{0\le i\le m}r_i^{{\tilde s}}\right)\norm{\cp_{s}}_{0,\cI^s}
    .\label{eqn:norm_of_smallest}\end{equation}

\end{lemma}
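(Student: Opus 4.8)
The plan is to reduce everything to the fixed reference interval $(0,1)$ via affine scaling, so that all the $\calpha$- and $r$-dependence is confined to explicit scalar factors, while the polynomial structure is handled once and for all by norm equivalence on the finite-dimensional space $\P^m((0,1))$. First I would invoke \autoref{lem:Dimension and uniqueness of extensions} to write the extension explicitly: if $\cp_s(x)=\sum_{k=0}^m a_k(x-\calpha)^k$, then
$$\cp_{s'}(x)=\Esp(\cp_s)(x)=\sum_{k=0}^m (r_k)^{\tilde{s}}\,a_k\,(x-\calpha)^k,$$
so the two one-sided polynomials share the same Taylor data at $\calpha$ up to the multiplicative factors $(r_k)^{\tilde{s}}$.

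Next I would rescale each side to $(0,1)$. Writing $x=\calpha+\sigma_s \ch_s t$ on $\cI^s$ (with $\sigma_-=-1,\sigma_+=+1$) turns $\cp_s$ into $q_s(t)=\sum_k c_k^s t^k$ with $c_k^s=a_k(\sigma_s\ch_s)^k$, and gives the exact scaling $\norm{\cp_s}_{0,\cI^s}^2=\ch_s\norm{q_s}_{0,(0,1)}^2$; the same computation applies to $\cp_{s'}$ with coefficients $c_k^{s'}=(r_k)^{\tilde{s}}a_k(\sigma_{s'}\ch_{s'})^k$. Comparing term by term yields the clean identity
$$|c_k^{s'}|=(r_k)^{\tilde{s}}\Big(\tfrac{\ch_{s'}}{\ch_s}\Big)^k|c_k^s|\le \Big(\max_{0\le i\le m}r_i^{\tilde{s}}\Big)\max\Big(1,\big(\tfrac{\ch_{s'}}{\ch_s}\big)^m\Big)\,|c_k^s|, \qquad 0\le k\le m.$$

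The decisive step is then norm equivalence on $\P^m((0,1))$, which is where all the $m$-dependence of $C$ is generated and which, crucially, is \emph{independent of} $\calpha$ and $r$ because it is stated on the fixed interval. I would combine the elementary upper bound $\norm{q_{s'}}_{0,(0,1)}\le\sum_k|c_k^{s'}|\le (m+1)\max_k|c_k^{s'}|$ with the nontrivial lower bound $\max_k|c_k^s|\le C(m)\norm{q_s}_{0,(0,1)}$, valid since $\{t^k\}_{k=0}^m$ is a basis of a finite-dimensional space so the $L^2$ and coefficient norms are equivalent. Chaining these through the coefficient comparison gives $\norm{q_{s'}}_{0,(0,1)}\le C(\max_i r_i^{\tilde{s}})\max(1,(\ch_{s'}/\ch_s)^m)\norm{q_s}_{0,(0,1)}$, and multiplying by the scaling factors $\sqrt{\ch_{s'}}$ and $\sqrt{\ch_s}$ produces \eqref{eqn:raw_norm_Emp}.

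Finally, for \eqref{eqn:norm_of_smallest} I would specialize to $\ch_s\ge\ch_{s'}$: then $\max(1,(\ch_{s'}/\ch_s)^m)=1$, and since $\ch_s+\ch_{s'}=1$ forces $\ch_s\ge\tfrac12$, we get $\sqrt{\ch_{s'}/\ch_s}\le\sqrt2\,\sqrt{\ch_{s'}}$, the factor $\sqrt2$ being absorbed into $C$. The only point demanding care is verifying that the norm-equivalence constant depends on $m$ alone; this is precisely the payoff of pulling back to the fixed reference interval, where the polynomial space and its basis no longer see $\calpha$, so that every trace of the interface position has been isolated into the explicit powers of $\ch_{s'}/\ch_s$ and the factors $r_i^{\tilde{s}}$.
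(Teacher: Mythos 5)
Your proposal is correct and follows essentially the same route as the paper's proof: the extension acts diagonally on the Taylor/monomial data at $\calpha$, rescaling to the unit interval isolates the explicit factors $r_k^{\tilde{s}}\left(\ch_{s'}/\ch_s\right)^k$, and a finite-dimensional norm equivalence on the fixed space $\P^m([0,1])$ supplies the $\calpha$-independent constant $C(m)$. The only cosmetic differences are that the paper rescales just the $s$-side and compares the $L^2$ norm with the endpoint-derivative values $\max_{0\le i\le m}|p^{(i)}(1)|$ (absorbing a harmless factor $\sum_{i=0}^m 1/i!\le e$ from the Taylor expansion), whereas you rescale both pieces symmetrically and compare the $L^2$ norm with the monomial-coefficient sup norm --- the same mechanism expressed through a different functional.
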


\begin{proof}
  First, we note that \eqref{eqn:norm_of_smallest} is a straightforward consequence of \eqref{eqn:raw_norm_Emp}. Here, we only need prove \eqref{eqn:raw_norm_Emp} for $s=-$ since the case $s=+$ can be proven similarly. For every $(\cp_-,\cp_+)\in \cV$, we first define $\hpm\in \P^m([0,1])$ as $\hpm(\xi)=\cp_-(\ch_- \xi)$ which yields

  \begin{equation}\hpm^{(i)}(1)= \ch_-^{i}\cp_-^{(i)}(\calpha),\quad i=0,1,\dots,m.
  \label{eqn:phi_ref_map}
  \end{equation}
Now, let us write $\cp_+$ as a finite Taylor sum around $\calpha$ and use $\cp_+^{(i)}(\calpha)=r_i\cp_-^{(i)}(\calpha)$ to obtain:
   $$\cp_+(x)=\sum_{i=0}^m \cp_+^{(i)}(\calpha)\frac{(x-\calpha)^i}{i!}
   =\sum_{i=0}^m r_i\cp_-^{(i)}(\calpha)\frac{(x-\calpha)^i}{i!}.
$$
Using \eqref{eqn:phi_ref_map}, we can replace $\cp_-^{(i)}(\calpha)$ by $\ch_-^{-i}\hpm^{(i)}(1)$:
\begin{equation}
\cp_+(x)=\sum_{i=0}^m r_i \hpm^{(i)}(1)\ch_-^{-i}\frac{(x-\calpha)^i}{i!}. \label{eqn:final_p+}
\end{equation}
We square and integrate \eqref{eqn:final_p+}, then we apply the change of variables $z=x-\calpha$ to get
$$\norm{\cp_+}_{0,\cI^+}^2= \int_{\calpha}^1\left(\sum_{i=0}^m r_i \hpm^{(i)}(1)\ch_-^{-i}\frac{(x-\calpha)^i}{i!} \right)^2\ dx= \int_{0}^{\ch_+}\left(\sum_{i=0}^m r_i \hpm^{(i)}(1)\left(\frac{z}{\ch_-}\right)^i \frac{1}{i!}\right)^2\ dz.$$
We can bound $r_i$ and $|\hpm^{(i)}(1)|$ by their maximum values for $0\le i\le m$ and we can bound $\left(\frac{z}{\ch_-}\right)^i $ by $$\left(\frac{z}{\ch_-}\right)^i \le \max\left(1,\left(\frac{\ch_+}{\ch_-}\right)^m\right).$$
We also have $\sum_{i=0}^m \frac{1}{i!}\le e$.  Using these  bounds, we get

\begin{align}\norm{\cp_+}_{0,\cI^+}^2&\le \left(\max_{0\le i\le m}r_i\right)^2
\left(\max_{0\le i\le m}
|\hpm^{(i)}(1)|\right)^2\max\left(1,\left(\frac{\ch_+}{\ch_-}\right)^m\right)^2\int_0^{\ch_+} e^{2}\ dz\notag\\ & = \left(\max_{0\le i\le m}r_i\right)^2
\left(\max_{0\le i\le m}|\hpm^{(i)}(1)|\right)^2\max\left(1,\left(\frac{\ch_+}{\ch_-}\right)^m\right)^2 \ch_+e^2.\label{eqn:first_p+_bound}\end{align}
Since $\P^m([0,1])$ is a finite dimensional space, all norms are equivalent. In particular, there is a constant $C(m)$ such that
\begin{equation*}
\left(\max_{0\le i\le m}|p^{(i)}(1)| \right)\le C(m)\norm{p}_{0,[0,1]}, ~\forall p\in \P^m([0,1]), 
\end{equation*}
which leads to
\begin{equation}\left(\max_{0\le i\le m}|\hpm^{(i)}(1)|\right)^2\le C(m)\norm{\hpm}_{0,[0,1]}^2.\label{eqn:equivalence_of_norms_revisited}\end{equation}
By using a change of variables, we can show that \begin{equation}\norm{\hpm}_{0,[0,1]}^2=\frac{1}{\ch_-}\norm{\cp_-}_{0,\cI^-}^2.\label{eqn:L2_scaling}\end{equation}
Finally, we combine \eqref{eqn:first_p+_bound}, \eqref{eqn:equivalence_of_norms_revisited} and \eqref{eqn:L2_scaling} to get

$$\norm{\mathcal{E}_{\check{\alpha}, r}^{m, +}\cp_-}^2 =  \norm{\cp_+}_{0,\cI^+}^2\le C(m)\left(\max_{0\le i\le m}r_i\right)^2\left(\frac{\ch_+}{\ch_-}\right)\max\left(1,\left(\frac{\ch_+}{\ch_-}\right)^m\right)^2\norm{\cp_-}_{0,\cI^-}^2$$
which is \eqref{eqn:raw_norm_Emp} for $s = -$.

\end{proof}

Next, we will use the bounds on the extension operator $\Es$ to establish inverse inequalities which are independent of $\calpha$ for the RIFE space.

\commentout{
        \begin{lemma}\label{lemma:inverse_inequality} 
        Let ${\tilde m} \geq m\ge 0, \{r_k\}_{k=0}^{\tilde m} \subset\mathbb{R}_+$, and $ \calpha\in (0,1)$, then there is $C(m,r)>0$ that depends on the sequence $r$ and the degree $m$ such that  for every $\cp\in \V^m_{\calpha,r}(\cI)$, we have
        \begin{align}
        \norm{\cp'}_{0,\cI}\le C(m,r) \norm{\cp}_{0,\cI}, ~~(\text{{\color{red}for higher order derivatives???}}) \label{eqn:inverse_inequality}
        \end{align}
        where $\cp'=(\cp_-',\cp_+')$.
        \end{lemma}
}

\begin{lemma} \label{lemma:inverse_inequality}
Let $\tilde{m}\ge m\ge 0, \{r_k\}_{k=0}^{\tilde{m}}\subset\mathbb{R}_+$,  $ \calpha\in (0,1)$. Then there exists $C(m,r)>0$ independent of $\calpha$ such that for every $\cp\in \V^m_{\calpha,r}(\cI)$ we have
\begin{align}
|\cp|_{i,\cI}\le C(m,r) \norm{\cp}_{0,\cI}, ~0 \leq i \leq m+1. \label{eqn:inverse_inequality}
\end{align}
\end{lemma}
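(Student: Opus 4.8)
The plan is to split the seminorm over the two sub-intervals $\cI^-$ and $\cI^+$ and to handle the larger and the smaller one by entirely different means. By symmetry (interchanging $+\leftrightarrow-$ and replacing $r_k$ by $r_k^{-1}$ throughout, which is harmless since each $r_k>0$), I may assume $\ch_-\ge \ch_+$, i.e. $\calpha\ge 1/2$, so that the left piece has length $\ch_-\ge 1/2$ while the right piece satisfies $\ch_+\le 1/2\le 1$. Since $\cp=(\cp_-,\cp_+)$ is piecewise a polynomial of degree at most $m$, the case $i=m+1$ is trivial ($\cp^{(m+1)}\equiv 0$ on each piece), so it suffices to treat $0\le i\le m$ and to bound $\abs{\cp_-}_{i,\cI^-}$ and $\abs{\cp_+}_{i,\cI^+}$ separately by $C(m,r)\norm{\cp}_{0,\cI}$.

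On the large piece $\cI^-$ I would simply use the ordinary polynomial inverse inequality obtained by scaling. On $[0,1]$ all norms on $\P^m([0,1])$ are equivalent, so $\abs{\hat p}_{i,[0,1]}\le C(m)\norm{\hat p}_{0,[0,1]}$; transplanting this to $\cI^-$ via the affine map of length $\ch_-$ gives $\abs{\cp_-}_{i,\cI^-}\le C(m)\,\ch_-^{-i}\norm{\cp_-}_{0,\cI^-}$. Because $\ch_-\ge 1/2$, the factor $\ch_-^{-i}\le 2^m$ is an absolute constant, whence $\abs{\cp_-}_{i,\cI^-}\le C(m)\norm{\cp_-}_{0,\cI^-}\le C(m)\norm{\cp}_{0,\cI}$, independently of $\calpha$.

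The small piece $\cI^+$ is the crux, and here the naive inverse inequality is useless: scaling produces the factor $\ch_+^{-i}$, which blows up as $\calpha\to 1$. Instead I would exploit the interface structure directly, as in the proof of \autoref{lem:norm_of_extension}. Writing $\cp_+$ as its Taylor polynomial about $\calpha$ and using the jump relations $\cp_+^{(k)}(\calpha)=r_k\cp_-^{(k)}(\calpha)$ from \autoref{lem:Dimension and uniqueness of extensions}, differentiation $i$ times gives $\cp_+^{(i)}(x)=\sum_{k=i}^m r_k\cp_-^{(k)}(\calpha)\,(x-\calpha)^{k-i}/(k-i)!$. Squaring, integrating over $\cI^+$ and substituting $z=x-\calpha\in[0,\ch_+]$, every power $z^{k-i}\le\ch_+^{k-i}\le 1$ stays bounded because $\ch_+\le 1$; pulling out $\max_k r_k$ and $\sum 1/(k-i)!\le e$ then yields $\abs{\cp_+}_{i,\cI^+}^2\le C(m)\,\ch_+\,(\max_k r_k)^2\,(\max_{0\le k\le m}\abs{\cp_-^{(k)}(\calpha)})^2$.

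It remains to control the nodal derivatives $\cp_-^{(k)}(\calpha)$, which I would bound exactly as in \autoref{lem:norm_of_extension}: via the rescaled polynomial $\hpm(\xi)=\cp_-(\ch_-\xi)$ one has $\cp_-^{(k)}(\calpha)=\ch_-^{-k}\hpm^{(k)}(1)$, and norm equivalence on $\P^m([0,1])$ together with $\norm{\hpm}_{0,[0,1]}^2=\ch_-^{-1}\norm{\cp_-}_{0,\cI^-}^2$ gives $\abs{\cp_-^{(k)}(\calpha)}\le C(m)\,\ch_-^{-k-1/2}\norm{\cp_-}_{0,\cI^-}$. Since $\ch_-\ge 1/2$, the negative powers of $\ch_-$ are again absolute constants, so $\abs{\cp_-^{(k)}(\calpha)}\le C(m)\norm{\cp_-}_{0,\cI^-}$. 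Substituting this and using $\ch_+\le 1$ gives $\abs{\cp_+}_{i,\cI^+}\le C(m,r)\norm{\cp_-}_{0,\cI^-}\le C(m,r)\norm{\cp}_{0,\cI}$, and adding the two pieces yields \eqref{eqn:inverse_inequality}. The main obstacle, and the whole point of the argument, is that one cannot apply a generic inverse inequality on the vanishing sub-interval; the remedy is to push all derivative information onto the nodal values at $\calpha$, which live on the well-conditioned large sub-interval, while using that the small sub-interval has length at most one so that no negative power of $\ch_+$ is ever produced.
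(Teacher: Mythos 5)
Your proof is correct and takes essentially the same route as the paper's: after the identical WLOG reduction $\ch_-\ge\ch_+$ (so $\ch_-\ge 1/2$), you treat the large piece by the classical scaled inverse inequality and the small piece by the Taylor-expansion-about-$\calpha$ plus jump-condition mechanism that underlies \autoref{lem:norm_of_extension}. The only difference is organizational: the paper proves $i=1$ by combining the commutation identity \eqref{eqn:first_shift_operator} with the extension bound \eqref{eqn:norm_of_smallest} and dispatches higher $i$ with ``similar arguments'' (iterating with the shifted sequences $\tau^j(r)$), whereas you inline that extension estimate with the $i$ differentiations built into a single closed-form expansion, handling all orders at once and making explicit the step the paper leaves implicit.
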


\begin{proof}
The estimate given in \eqref{eqn:inverse_inequality} obviously holds for $i = 0$ and $i = m+1$. Without loss of generality, assume that $\ch_-\ge \ch_+$, this implies that $\ch_-\ge \frac{1}{2}$. Then, using the classical inverse inequality \cite{brennerPolynomialApproximationTheory1994} we have

\begin{equation}\norm{\cp'_-}_{0,\cI^-}\le C \ch_-^{-1} \norm{\cp_-}_{0,\cI^-}\le 2C \norm{\cp_-}_{0,\cI^-}.\label{eqn:first_inv}
\end{equation}
By the Taylor expansion of $\cp'_+(x)$ at $x = \check{\alpha}$, we have
\begin{equation}
\cp'_+ = \mathcal{E}^{m-1,+}_{\calpha, \tau(r)}\left(\ddx \cp_-\right), \label{eqn:first_shift_operator}
\end{equation}
where $\tau:(r_0,r_1,\dots,r_m)\mapsto (r_1,\dots,r_m)$ is the shift operator. By \eqref{eqn:norm_of_smallest} and the inverse inequality given in \eqref{eqn:first_inv}, we have
\begin{equation}
\norm{\cp'_+}_{0,\cI^+}\le C(m)\left(\max_{1\le i \le m}r_i\right) \norm{\cp_-'}_{0,\cI^-}\le C(m,r) \norm{\cp_-}_{0,\cI^-}. \label{eqn:second_inv}
\end{equation}
Therefore, we have
$$\abs{\cp}_{1, \cI} = \norm{\cp'}_{0,\cI}\le C(m,r) \norm{\cp_-}_{0,\cI^-}\le C(m,r) \norm{\cp}_{0,\cI}$$
which proves \eqref{eqn:inverse_inequality} for $i=1$.
Applying similar arguments, we can prove \eqref{eqn:inverse_inequality} for other values of $i$.

\comment{Here's a proof of \eqref{eqn:first_shift_operator}: Let $$\cp_-(x)=\sum_{i=0}^m c_i(x-\calpha)^i,$$
then $\cp_+=\Ep(\cp_-)$ is
$$\cp_+(x)=\sum_{i=0}^m r_i c_i(x-\calpha)^i.$$
By taking the derivative and rearranging the sum, we get
$$\cp_+(x)=\sum_{i=0}^{m-1} r_{\color{red}i+1} (i+1)c_{i+1}(x-\calpha)^i,\qquad \cp_-'(x)=\sum_{i=0}^{m-1}  (i+1)c_{i+1}(x-\calpha)^i.$$

Hence, $\cp'_+= \mathcal{E}^{m-1,+}_{\calpha, \tau(r)}\ddx \cp_-$.
}

\end{proof}

Since $\cp_+= \Ep \left(\cp_-\right)$, the formula in \eqref{eqn:first_shift_operator} leads to the following identity about the permutation of the classical differential operator and the extension operator:
\begin{equation}
\ddx \Ep \left(\cp_-\right)= \cp'_+ = \mathcal{E}^{m-1,+}_{\calpha, \tau(r)}\left(\ddx \cp_-\right),
~\forall m \geq 1. \label{eqn:DiffOp_EOp_exchange}
\end{equation}

As a piecewise function $\cp = (\cp_-, \cp_+) \in \cV$, the value of $\cp$ at $\calpha$ is not defined in general since the two sided limits $\cp_-(\calpha)$ and $\cp_+(\calpha)$ could be different if $r_0\ne 1$. However, if $\cp_s(\calpha)=0$ then $\cp_{s'}(\calpha)=0$ for $s=+,-$. Furthermore, the multiplicity of $\calpha$ as a root of $\cp_-$ is the same as its multiplicity as a root of $\cp_+$. This observation motivates
to define  $\calpha$ as a root of $\cp$ of multiplicity $d$ if $\calpha$ is a root of $\cp_-$ of multiplicity $d$. The following theorem shows that the number of roots of a non-zero function $\cp\in \cV$ counting multiplicities cannot exceed $m$ (similar to a polynomial of degree $m$), this theorem will be crucial to establish the existence of a Lagrange-type basis in $\cV$ and constructing an immersed Radau projection later in \autoref{sec:one_D_class}.

\comment{There is an unproven claim here: if $\calpha$ is a roof of $\cp_-$ of multiplicity $d$, then it is a root of $\cp_+$ of the same multiplicity but it is easy to prove:

We have $\cp_-^{(k)}(\calpha)=0$ for $k=0,1,\dots d-1.$ and $\cp_-^{(d)}(\calpha)=0$, then the same holds for $\cp_+$.
}

In the discussions below, we will omit the phrase ``counting multiplicities" for the sake of conciseness. For example, we say that $(x-2)^2$ has two roots in $\mathbb{R}$.

\begin{theorem}
\label{thm:Zeros of IFE basis functions}
For ${\tilde m} \geq m\ge 0, \{r_k\}_{k=0}^{\tilde m} \subset\mathbb{R}_+$, and $ \calpha\in (0,1)$, every non-zero $\cp \in \V^m_{\calpha,r}(\cI)$ has at most $m$ roots.
\end{theorem}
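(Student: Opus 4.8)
The plan is to reduce the statement to the classical Descartes' rule of signs, with the positivity of the coefficients $r_k$ playing the decisive role. By \autoref{lem:Dimension and uniqueness of extensions} and the canonical basis, any $\cp=(\cp_-,\cp_+)\in\cV$ can be written, in the shifted variable $y=x-\calpha$, as $\cp_-(x)=\sum_{k=0}^m c_k\,y^k$ and $\cp_+(x)=\sum_{k=0}^m r_k c_k\,y^k$ for a common coefficient vector $(c_k)$, with $\cp\neq 0$ forcing $(c_k)\neq 0$. A root of $\cp$ in $\cI^-$ is then a strictly negative root of $P_-(y)=\sum_k c_k y^k$, a root of $\cp$ in $\cI^+$ is a strictly positive root of $P_+(y)=\sum_k r_k c_k y^k$, and the multiplicity $d$ of $\calpha$ is by definition the order of vanishing of $P_-$ at $y=0$, i.e.\ the least index with $c_d\neq 0$. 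Thus the total root count is $N=A+d+B$, where $A,B$ are the numbers of roots of $\cp$ in $\cI^-,\cI^+$; since these are bounded by the numbers of negative roots of $P_-$ and positive roots of $P_+$ respectively, it suffices to control the latter. The endpoints $0,1$ require no special care, as they map to strictly negative, resp.\ strictly positive, values of $y$.

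Next I would factor out the common zero at the origin: writing $P_\pm(y)=y^d\,q_\pm(y)$ with $q_-(y)=\sum_{j=0}^{m-d}c_{j+d}y^j$ and $q_+(y)=\sum_{j=0}^{m-d}r_{j+d}c_{j+d}y^j$, one has $q_-(0)=c_d\neq 0$, $q_+(0)=r_dc_d\neq 0$, and $A$ (resp.\ $B$) is at most the number of negative (resp.\ positive) real roots of $q_-$ (resp.\ $q_+$). This is exactly where $r_k>0$ enters: since every $r_k>0$, the coefficient sequences of $q_-$ and $q_+$ have identical sign patterns, as $\mathrm{sign}(r_{j+d}c_{j+d})=\mathrm{sign}(c_{j+d})$. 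Let $V^+$ be the number of sign changes in $(c_d,c_{d+1},\dots,c_m)$ and $V^-$ the number of sign changes in the alternated sequence $(c_d,-c_{d+1},c_{d+2},\dots)$. Descartes' rule of signs then gives that the number of positive roots of $q_+$ is at most $V^+$, while the number of negative roots of $q_-$ (equivalently, the positive roots of $q_-(-y)$) is at most $V^-$; hence $A\le V^-$ and $B\le V^+$.

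It remains to combine these through the elementary counting inequality $V^++V^-\le m-d$, valid for any real sequence $(c_d,\dots,c_m)$ of length $m-d+1$. I would prove it by examining each pair of consecutive nonzero entries, say at positions $i<i'$: if the gap $i'-i$ is odd the pair contributes exactly $1$ to $V^++V^-$, and if it is even it contributes $0$ or $2$, so in every case the contribution is at most $i'-i$; summing, the gaps telescope to at most $m-d$. Combining everything, $A+B\le V^-+V^+\le m-d$, and therefore $N=A+d+B\le m$, which is the claim.

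The genuine obstacle is that the two obvious strategies both fail. Bounding the roots of $\cp_-$ and $\cp_+$ separately by their degrees only yields $N\le 2m$, and the natural induction on $m$ via $\cp'\in\V^{m-1}_{\calpha,\tau(r)}(\cI)$ (see \eqref{eqn:DiffOp_EOp_exchange}) together with Rolle's theorem breaks down precisely when $\calpha$ is not a root ($d=0$): in that case $\cp$ has nonzero one-sided limits $\cp_-(\calpha)=c_0$ and $\cp_+(\calpha)=r_0c_0$ of the \emph{same} sign (again because $r_0>0$), so no sign change occurs across $\calpha$, Rolle produces no critical point between the nearest zeros on the two sides, and one root is irretrievably lost—degrading the induction to the useless bound $N\le m+1$. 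The Descartes argument circumvents this by never crossing $\calpha$: it reads both the left and the right root counts off the single sign pattern $(\mathrm{sign}\,c_k)$, which is shared by $P_-$ and $P_+$ only because every $r_k$ is positive. The one new ingredient needing careful verification is the inequality $V^++V^-\le m-d$; the rest is bookkeeping.
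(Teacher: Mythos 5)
Your Descartes'-rule argument is correct, but it is genuinely different from the paper's proof, which proceeds by induction on $m$: assuming a non-zero $\cp$ has $D>m$ roots, the paper applies Rolle's theorem separately to $\cp_-$ and $\cp_+$ on each side of the interface and then recovers the one root of $\cp'$ in the gap $(\xi_{i_0},\xi_{i_0+1})$ straddling $\calpha$, giving $\cp'\in \V^{m-1}_{\calpha,\tau(r)}(\cI)$ at least $D-1>m-1$ roots and a contradiction. Your own steps all check out: the shared coefficient vector $(c_k)$ from the canonical basis, the identical sign patterns of $q_-$ and $q_+$ (this is where $r_k>0$ enters), the bounds $A\le V^-$ and $B\le V^+$ from Descartes' rule (which does count with multiplicity), and the counting inequality $V^++V^-\le m-d$, whose gap-telescoping proof is the standard one; together they give $N=A+d+B\le m$, and the endpoint and degenerate cases ($\cp_-\equiv 0$ forcing $\cp\equiv 0$ by uniqueness of the extension, $m=0$) are handled. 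What your route buys: it is non-inductive and one-shot, it makes the role of the positivity of \emph{every} $r_k$ completely transparent, and it even yields the finer bound $A+B\le m-d$. What the paper's route buys: it reuses machinery already in place — the derivative identity \eqref{eqn:DiffOp_EOp_exchange} and the fact that $\cp'\in\V^{m-1}_{\calpha,\tau(r)}(\cI)$ — and mirrors the classical Rolle-based proof for polynomials.

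One substantive correction to your closing commentary: your claim that the Rolle/induction strategy ``irretrievably loses'' a root when $\calpha$ is not a root, degrading the bound to $N\le m+1$, is mistaken. The paper's proof handles exactly this case ($\xi_{i_0}<\calpha$) with a sign argument powered by the same positivity you exploit: since $r_0>0$, the one-sided values $\cp_-(\calpha)$ and $\cp_+(\calpha)$ share a sign, so if $\cp'$ kept a fixed sign on $(\xi_{i_0},\xi_{i_0+1})\backslash\{\calpha\}$, integrating from the zero at $\xi_{i_0}$ up to $\calpha$ and then from $\calpha$ to $\xi_{i_0+1}$ would contradict $\cp(\xi_{i_0+1})=0$; hence $\cp'$ changes sign at some $x_0$ in the gap, and since $r_1>0$ prevents a sign change of $\cp'$ at $\calpha$ itself, $x_0\ne\calpha$ and $\cp'(x_0)=0$. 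So the inductive route does deliver the full bound $N\le m$; your proof is an alternative, not a repair.
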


\begin{proof} We start from the base case $m=0$ and then proceed by induction. Let $\cp\in\V^0_{\calpha,r}(\cI)$. if $\cp\not\equiv0$ then $\cp=c\N^0_{\calpha,r}$ for some $c\ne 0$. In this case, $\cp$ has no roots since $\N^0_{\calpha,r}$ has no roots.
 Now, assume that for every positive sequence $q$, the number of roots of any non-zero $\cp\in \V^{m-1}_{\calpha,q}(\cI)$ is at most $m-1$. Next, we will show that for a given positive sequence $r$, every function $\cp\in \cV$ has at most $m$ roots by contradiction:

 Assume that $\cp = (\cp_-, \cp_+) \in \V^{m}_{\calpha,r}(\cI)$ is a non-zero function that has $j$ disctinct roots $\{\xi_i\}_{i=1}^j$ of multiplicities $\{d_i\}_{i=1}^j$ such that $D=d_1+d_2+\dots+d_j>m$. Therefore, $\xi_j>\calpha$ and $\xi_1\le \calpha$ because $\cp_{\pm}\in\P^m(\cI^s)$. let $\xi_{i_0}$ be the largest root
 that is not larger than $\calpha$, i.e.,
 $$0\le \xi_1<\xi_2<\dots<\xi_{i_0}\le \calpha<\xi_{i_0+1}<\dots<\xi_j\le 1,$$


 By the definition of $\cp = (\cp_-, \cp_+)$,  $\cp_-$ has $D_1=d_1+d_2+\dots+d_{i_0}$ roots in $[\xi_{1},\xi_{i_0}]$ and $\cp_+$ has $D_2=d_{i_0+1}+d_{i_0+2}+\dots+d_j$ roots in $[\xi_{i_0+1},\xi_j]$. Therefore, $\cp_-'$ has $D_1-1$ roots in $[\xi_{1},\xi_{i_0}]$ and $\cp_+'$ has $D_2-1$ roots in $[\xi_{i_0+1},\xi_j]$. It remains to show that $\cp'$ has an additional root in $(\xi_{i_0},\xi_{i_0+1})$. To show that,we consider two cases:
 \begin{itemize}
     \item $\xi_{i_0}=\calpha$: In this case, $\cp$ is continuous and $\cp_+(\calpha)=\cp_+(\xi_{i_0+1})=0$. By the mean value theorem, we conclude that $\cp_+'$ has a root in $(\xi_{i_0},\xi_{i_0+1})$.

     \item $\xi_{i_0}<\calpha$: Assume that $\cp'(x)> 0$ for all $x\in(\xi_{i_0},\xi_{i_0+1})\backslash\{\calpha\}$, then $\cp_-(\calpha)>0$. Since $r_0>0$, we have $\cp_+(\calpha)>0$. By integrating $\cp_+'$ from $\calpha$ to $\xi_{i_0+1}$, we get $0=\cp_{+}(\xi_{i_0+1})>0$, a contradiction. A similar conclusion follows if we assume that $\cp'(x)< 0$ for all $x\in(\xi_{i_0},\xi_{i_0+1})\backslash\{\calpha\}$. Therefore, $\cp'$ changes sign at some  $x_0\in (\xi_{i_0},\xi_{i_0+1})$. Since, $\cp'$ does not change sign at $\calpha$, then $x_0\ne \calpha$ and $\cp'(x_0)=0$ (because $\cp'$ is continuous at $x_0$).
 \end{itemize}

 In either case above, $\cp'$ has $(D_1-1)+(D_2-1)+1=D-1>m-1$ roots in $\cI$ which contradicts the induction hypothesis since $\cp'\in \V^{m-1}_{\calpha,\tau(r)}(\cI)$. Therefore, $\cp$ has at most $m$ roots.



\end{proof}

The previous theorem allows us to establish the existence of a Lagrange basis on $\cV$ for every choice of nodes and for every degree $m$ which was proved by Moon in \cite{moonImmersedDiscontinuousGalerkin2016} for a few specific cases $m=1,2,3,4$.

\begin{theorem}
\label{thm:Lagrange basis}
 Let ${\tilde m} \geq m\ge 0, \{r_k\}_{k=0}^{\tilde m} \subset\mathbb{R}_+$, and $ \calpha\in (0,1)$. Assume $\xi_0,\xi_1,\dots,\xi_{m}$ are $m+1$ distinct points in $\cI$, then there is a Lagrange basis $\{L_i\}_{i=0}^{m}$ of $\V^m_{\calpha,r}(\cI)$ that satisfies
\begin{align}
L_i(\xi_j)=\delta_{i,j},\qquad \  0\le i,j\le m. \label{eq:Lagrange basis}
\end{align}
\end{theorem}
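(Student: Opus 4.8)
The plan is to recast the claim as the invertibility of a single linear evaluation map, so that the Lagrange basis appears as the preimage of the standard basis of $\R^{m+1}$. Concretely, I would define the map $\Phi:\cV\to\R^{m+1}$ by $\Phi(\cp)=(\cp(\xi_0),\cp(\xi_1),\dots,\cp(\xi_m))$, adopting the convention $\cp(\calpha):=\cp_-(\calpha)$ at the interface so that evaluation is unambiguous even for the (possibly discontinuous) functions in $\cV$. This $\Phi$ is manifestly linear, and by \autoref{lem:Dimension and uniqueness of extensions} its domain $\cV$ has dimension $m+1$, matching the dimension of its codomain. Hence it suffices to prove that $\Phi$ is injective, for then it is automatically an isomorphism.

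To establish injectivity, I would argue by contradiction using the root count from \autoref{thm:Zeros of IFE basis functions}. Suppose $\cp\in\cV$ satisfies $\Phi(\cp)=0$, i.e. $\cp(\xi_j)=0$ for every $j=0,1,\dots,m$. With the convention above, each node $\xi_j$ is then a root of $\cp$ in the sense defined in the discussion preceding \autoref{thm:Zeros of IFE basis functions} (recall that $\calpha$ is declared a root of $\cp$ exactly when it is a root of $\cp_-$). Since the $\xi_j$ are distinct, $\cp$ has at least $m+1$ roots, which contradicts \autoref{thm:Zeros of IFE basis functions} unless $\cp\equiv 0$. Therefore $\ker\Phi=\{0\}$ and $\Phi$ is an isomorphism.

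Given the isomorphism, I would simply set $L_i=\Phi^{-1}(e_i)$, where $e_i$ is the $i$-th standard basis vector of $\R^{m+1}$. By construction $L_i(\xi_j)=\delta_{i,j}$, which is \eqref{eq:Lagrange basis}, and because $\{e_i\}_{i=0}^m$ is a basis of $\R^{m+1}$ and $\Phi^{-1}$ is a linear isomorphism, the image $\{L_i\}_{i=0}^m$ is a basis of $\cV$. The only delicate point I anticipate is the treatment of a node coinciding with $\calpha$: one must fix a consistent one-sided evaluation so that \emph{value at a node} and \emph{root at a node} refer to the same quantity, and check that the zero-count argument still applies. Apart from this bookkeeping, the argument is a clean dimension-plus-injectivity count, and the genuine content---that $m+1$ interpolation conditions cannot annihilate a nonzero IFE function---is supplied entirely by the root bound of \autoref{thm:Zeros of IFE basis functions}.
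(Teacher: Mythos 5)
Your proof is correct, and it packages the argument differently from the paper, though both rest on exactly the same two ingredients: the dimension count $\dim\cV=m+1$ from \autoref{lem:Dimension and uniqueness of extensions} and the root bound of \autoref{thm:Zeros of IFE basis functions}. The paper proceeds constructively, one basis function at a time: it writes $\tilde L_i$ in the canonical basis $\{\N^k_{\calpha,r}\}_{k=0}^m$, observes that the $m$ homogeneous conditions $\tilde L_i(\xi_j)=0$, $j\ne i$, form an underdetermined system and hence admit a nonzero solution, invokes the root bound to conclude $\tilde L_i(\xi_i)\ne 0$ (else $\tilde L_i$ would have $m+1$ roots), and normalizes; linear independence then follows from \eqref{eq:Lagrange basis} and the dimension count upgrades the family to a basis. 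You instead consider the full evaluation map $\Phi:\cV\to\R^{m+1}$ and prove $\ker\Phi=\{0\}$ from the root bound, so that $\Phi$ is an isomorphism between spaces of equal dimension and $L_i=\Phi^{-1}(e_i)$. Your route is slightly cleaner and buys a bit more: it simultaneously yields \emph{unisolvence}, i.e.\ uniqueness of the interpolant for arbitrary nodal data, which the paper's construction does not state. Your handling of a node at the interface is also sound and consistent with the paper's conventions: since the paper declares $\calpha$ a root of $\cp$ precisely when $\cp_-(\calpha)=0$, your convention $\cp(\calpha):=\cp_-(\calpha)$ makes ``vanishing at the node $\calpha$'' coincide with ``root at $\calpha$'' (and since $r_0>0$, $\cp_-(\calpha)=0$ iff $\cp_+(\calpha)=0$, so the choice of side is immaterial for the zero count; it only affects the normalization $L_i(\calpha)=1$ one-sidedly, with $L_{i,+}(\calpha)=r_0$). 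The paper's proof, which leaves evaluation at $\calpha$ implicit, is in this respect marginally less careful than yours.
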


\begin{proof} For each $0\le i\le m$, we construct $\tilde{L}_i\in \cV$ such that $\tilde{L}_{i}(\xi_j)=0$ for all $j\ne i$ by writing $\tilde{L}_i$ as
$$\tilde{L}_i=\sum_{i=0}^{m}a_i\N^i_{\calpha,r},$$
for some $\{a_i\}_{i=0}^m$ chosen such that \begin{equation}\tilde{L}_{i}(\xi_j)=\sum_{i=0}^{m}a_i\N^i_{\calpha,r}(\xi_j)=0,\quad \forall j\in\{0,1,\dots,i-1,i+1,\dots,m\}.
\label{eqn:system_for_Lagrange}
\end{equation}
The equations \eqref{eqn:system_for_Lagrange} form a homogeneous system of $m$ equations with $m+1$ unknowns. Therefore, it has a non-zero solution.
From  \autoref{thm:Zeros of IFE basis functions}, we know that $\tilde{L}_{i}(\xi_i)\ne 0$; otherwise, $\tilde{L}_i$ would have $m+1$ roots. This allows us to define $L_i$ as
$$L_i(x)=\frac{1}{\tilde{L}_i(\xi_i)}\tilde{L}(x).$$

By \eqref{eq:Lagrange basis}, $L_i, 0 \leq i \leq m$ are linearly independent. Consequently, $\{L_i\}_{i=0}^m$ is a basis for $\cV$ since its dimension is $m+1$ from \autoref{lem:Dimension and uniqueness of extensions}.

\end{proof}

In addition to having a Lagrange basis, the RIFE space has an orthogonal basis with respect to $(\cdot,\cdot)_{w,\cI}$ as stated in the following theorem in which we also show that if a function $\cp\in \cV$ is orthogonal to $\V^{m-1}_{\calpha,r}(\cI)$ with respect to $(\cdot,\cdot)_{w,\cI}$, then $\cp$ has exactly $m$ distinct interior roots similar to the classical orthogonal polynomials. Although the theorem holds for  a general weight $w$, we will restrict our attention to a piecewise constant function $w$:
\begin{equation}w(x)=\begin{cases}
w_-,&x\in \cI^-,\\ w_+,& x\in \cI^+,
\end{cases}
\label{eqn:definition_of_w}
\end{equation}
where $w_{\pm}$ are positive constants.  The result of this theorem can also be considered as a generalization for the theorem about the orthogonal IFE basis described in \cite{caoSuperconvergenceImmersedFinite2017} for elliptic interface problems.

\begin{theorem}
\label{thm:Zeros of orthogonal IFE functions}
 Let ${\tilde m} \geq m\ge 1, \{r_k\}_{k=0}^{\tilde m} \subset\mathbb{R}_+$, $ \calpha\in (0,1)$, and let $w:\cI\to \mathbb{R}_+$ be defined as in \eqref{eqn:definition_of_w}, then there is a non-zero $\cp\in \cV$ such that
\begin{align}
(\cp,\cpsi)_{w,\cI} =\int_{\cI} w(x)\cp(x)\cpsi(x)\ dx= 0,\quad \forall \cpsi\in \V^{m-1}_{\calpha, r}(\cI). \label{eq:Zeros of orthogonal IFE functions}
\end{align}
Furthermore, $\cp$ has exactly $m$ distinct roots in the interior of $\cI$.
\end{theorem}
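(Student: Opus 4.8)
The plan is to treat the two assertions separately: existence by a dimension count, and the root statement by adapting the classical argument for the zeros of orthogonal polynomials, with \autoref{thm:Zeros of IFE basis functions} supplying the crucial ``at most $m$ roots'' input.

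\textbf{Existence.} Since $w>0$, the form $(\cdot,\cdot)_{w,\cI}$ is a genuine inner product on the finite-dimensional space $\cV$: if $(\cp,\cp)_{w,\cI}=\int_\cI w\,\cp^2=0$ then the piecewise polynomial $\cp$ vanishes identically. By \autoref{lem:Dimension and uniqueness of extensions}, $\dim\cV=m+1$ and $\dim\V^{m-1}_{\calpha,r}(\cI)=m$, and the latter is a subspace of the former by the nesting property noted after \autoref{lem:Dimension and uniqueness of extensions}. Hence the orthogonal complement of $\V^{m-1}_{\calpha,r}(\cI)$ in $\cV$ with respect to $(\cdot,\cdot)_{w,\cI}$ has dimension $(m+1)-m=1$, and any non-zero element $\cp$ of this complement satisfies the orthogonality condition \eqref{eq:Zeros of orthogonal IFE functions}.

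\textbf{Counting the roots.} Let $\eta_1<\eta_2<\dots<\eta_k$ be the distinct points in the interior of $\cI$ at which $\cp$ changes sign. A sign change can occur only at a root, and I would first record that $\cp$ does not change sign at $\calpha$ merely because of its jump: since $r_0>0$, the one-sided limits $\cp_-(\calpha)$ and $\cp_+(\calpha)=r_0\cp_-(\calpha)$ have the same sign, so $\calpha$ is a sign-change point only if it is an actual root, in which case the relations $\cp_+^{(j)}(\calpha)=r_j\cp_-^{(j)}(\calpha)$ with $r_j>0$ make the sign behavior across $\calpha$ match that of a polynomial of the same local multiplicity. By \autoref{thm:Zeros of IFE basis functions}, $\cp$ has at most $m$ roots, so $k\le m$. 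Suppose, for contradiction, that $k\le m-1$. I then construct a test function $\cpsi\in\V^k_{\calpha,r}(\cI)$ whose sign changes match those of $\cp$: because $\dim\V^k_{\calpha,r}(\cI)=k+1$, imposing the $k$ homogeneous conditions $\cpsi(\eta_i)=0$ leaves a nontrivial solution space, so there is a non-zero $\cpsi$ vanishing at every $\eta_i$ (for $k=0$ one simply takes the sign-definite $\N^0_{\calpha,r}$ from \eqref{eqn:definition_of_canonical_basis}). By \autoref{thm:Zeros of IFE basis functions} this $\cpsi$ has at most $k$ roots, hence its roots are exactly $\eta_1,\dots,\eta_k$, all simple, and it changes sign precisely at these points (again using $r_0,\dots,r_k>0$ at $\calpha$). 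Since $k\le m-1$, the nesting $\V^k_{\calpha,r}(\cI)\subseteq\V^{m-1}_{\calpha,r}(\cI)$ places $\cpsi$ in $\V^{m-1}_{\calpha,r}(\cI)$.

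Because $\cp$ and $\cpsi$ change sign at exactly the same points, the product $w\,\cp\,\cpsi$ keeps a single sign throughout $\cI$ and is not identically zero, so $(\cp,\cpsi)_{w,\cI}\neq 0$, contradicting \eqref{eq:Zeros of orthogonal IFE functions}. Therefore $k=m$. The $m$ distinct interior sign changes contribute at least $m$ to the root count with multiplicity, which by \autoref{thm:Zeros of IFE basis functions} is at most $m$; hence each is a simple root and there are no further roots, in particular none at the endpoints, so $\cp$ has exactly $m$ distinct roots, all in the interior of $\cI$. I expect the delicate step to be the bookkeeping at $\calpha$: confirming that the positivity of the coefficients $r_j$ forces the sign of an IFE function, and of the product $\cp\cpsi$, to transition across the interface exactly as a polynomial of the same local multiplicity would, so that the sign-change counting underlying the orthogonal-polynomial argument remains valid despite the discontinuity. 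The remaining ingredients are routine dimension counting and the root bound of \autoref{thm:Zeros of IFE basis functions}.
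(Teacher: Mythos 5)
Your proof is correct and follows essentially the same route as the paper's: existence by the dimension count for the orthogonal complement, and the root count by the classical orthogonal-polynomial contradiction, building a test function in $\V^{k}_{\calpha,r}(\cI)$ vanishing at the sign-change points (the paper phrases these as interior roots of odd multiplicity, which is the same set) and invoking \autoref{thm:Zeros of IFE basis functions} to force simplicity and a sign-definite product $w\,\cp\,\cpsi$. Your explicit bookkeeping of sign behavior across $\calpha$ via $r_j>0$ is a point the paper leaves implicit, but it is a refinement of the same argument rather than a different approach.
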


\begin{proof}
 Existence is a classical result of linear algebra.
The proof of the second claim follows the same steps used for orthogonal polynomials:  Note that $\cp$ has at least one root of odd multiplicity in the interior of $\cI$ since $(\cp,\N^0_{\calpha,r})_{w,\cI}=0$.

Assume that $\cp$ has $j<m$ distinct roots $\{\xi_i\}_{i=1}^j$ of odd multiplicity in the interior of $\cI$. Following the ideas in the proof of \autoref{thm:Lagrange basis}, we can show that there is $\cpsi_0\in\V^{j}_{\calpha,r}(\cI)$ such that $\cpsi_{0}(\xi_i)=0$ for $1\le i\le j$.

Furthermore, all roots of $\cpsi_0$ are simple according to \autoref{thm:Zeros of IFE basis functions} since the sum of multiplicities cannot exceed $j$, and $\cpsi_0$ changes sign at these roots. This means that $w\cp\cpsi_0$ does not change sign on $\cI$. As a consequence, $(\cp,\cpsi_0)_{w,\cI}\ne 0$ which contradicts the assumption $(\cp,\cpsi)_{w,\cI}=0$ for all $\cpsi\in \V^{m-1}_{\calpha,r}(\cI)$ since $\V^{j}_{\calpha,r}(\cI)\subseteq\V^{m-1}_{\calpha,r}(\cI)$.
\end{proof}

For every integer $m$ with ${\tilde m} \geq m \geq 1$, we use $\cQ, ~m \geq 1$ to denote the orthogonal complement of $\V^{m-1}_{\calpha,r}(\cI)$ in $\cV$ with respect to the weight $w$, that is
$$\cQ=\left\{\cp\in \cV\mid (\cp,\cpsi)_{w,\cI}=0,\ \forall\cpsi\in \V^{m-1}_{\calpha,r}(\cI)\right\}.$$
According to \autoref{thm:Zeros of orthogonal IFE functions}, one can see that  $\cp\mapsto \sqrt{\cp(0)^2+\cp(1)^2}$ defines a norm on $\cQ$ which is one-dimensional. Thus, it is is equivalent to the $L^2$ norm and the quantity $\frac{\sqrt{\cp(0)^2+\cp(1)^2}}{\norm{\cp}_{0,\cI}}$ depends only on $\calpha,w$ and $r$ (and not on the choice of $\cp\in\cQ$). Furthermore, The following lemma shows that the equivalence constant is independent of the interface location. This result will be crucial later in the analysis of Radau projections.

\begin{lemma}
\label{lem:norm_on_Q}
Let ${\tilde m} \geq m\ge 1, \{r_k\}_{k=0}^{\tilde m} \subset\mathbb{R}_+$, $ \calpha\in (0,1)$ and $w$ be defined as in \eqref{eqn:definition_of_w} then, there exist $C(m,w,r)$ and $\tilde{C}(m,w,r)>0$ independent of $\calpha$ such that for every $\cp\in \cQ$, we have
\begin{align}
\sqrt{\cp(0)^2+\cp(1)^2} \ge C(m,w,r)\norm{\cp}_{0,\cI}\ge\tilde{C}(m,w,r)\norm{\cp}_{1,\cI}.
\label{eq:norm_on_Q}
\end{align}
\end{lemma}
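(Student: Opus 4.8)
The plan is to split the claim into its two inequalities, treat the second as an immediate consequence of the inverse estimate, and concentrate the real effort on the first, whose difficulty is the uniformity of the equivalence constant in $\calpha$. For the second inequality, write $\norm{\cp}_{1,\cI}^2=\norm{\cp}_{0,\cI}^2+\abs{\cp}_{1,\cI}^2$ and apply \autoref{lemma:inverse_inequality} with $i=1$ to get $\abs{\cp}_{1,\cI}\le C(m,r)\norm{\cp}_{0,\cI}$, so that $\norm{\cp}_{1,\cI}\le\sqrt{1+C(m,r)^2}\,\norm{\cp}_{0,\cI}$; the second inequality then follows from the first by absorbing $\sqrt{1+C(m,r)^2}$ into $\tilde C(m,w,r)$. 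It therefore remains to prove $\sqrt{\cp(0)^2+\cp(1)^2}\ge C(m,w,r)\norm{\cp}_{0,\cI}$ with a constant independent of $\calpha$.

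For each fixed $\calpha$ the space $\cQ$ is one-dimensional by \autoref{thm:Zeros of orthogonal IFE functions}, so $\cp\mapsto\sqrt{\cp(0)^2+\cp(1)^2}$ and $\cp\mapsto\norm{\cp}_{0,\cI}$ are equivalent norms on it; the issue is only that the equivalence constant could degenerate as $\calpha$ ranges over $(0,1)$. I would argue by contradiction: if no uniform $C$ exists, there are $\calpha_n\in(0,1)$ and generators $\cp^{(n)}\in\mathcal{Q}^{m}_{\calpha_n,w,r}(\cI)$ with $\norm{\cp^{(n)}}_{0,\cI}=1$ and $\cp^{(n)}(0)^2+\cp^{(n)}(1)^2\to 0$. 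Passing to a subsequence, $\calpha_n\to\calpha_\ast\in[0,1]$. The strategy is to extract a limit function, show it is a nonzero element orthogonal to the limiting $\V^{m-1}$ space, and invoke \autoref{thm:Zeros of orthogonal IFE functions} to conclude it has $m$ interior roots and hence does not vanish at either endpoint, contradicting the assumed decay of the endpoint values.

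When $\calpha_\ast\in(0,1)$ both subintervals stay nondegenerate, so the monomial coefficients of the polynomial pieces $\cp^{(n)}_\pm$ are bounded uniformly by $\norm{\cp^{(n)}}_{0,\cI}=1$, and a further subsequence converges to $\cp^\ast=(\cp^\ast_-,\cp^\ast_+)$. Since the $r_k$ are fixed, the jump conditions pass to the limit, giving $\cp^\ast\in\V^m_{\calpha_\ast,r}(\cI)$; evaluating at the fixed points $0,1$ yields $\cp^{(n)}(0)\to\cp^\ast(0)$, $\cp^{(n)}(1)\to\cp^\ast(1)$, and $\norm{\cp^\ast}_{0,\cI}=1$. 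Orthogonality survives the limit because any $\cpsi\in\V^{m-1}_{\calpha_\ast,r}(\cI)$ can be approximated by $\cpsi_n\in\V^{m-1}_{\calpha_n,r}(\cI)$ sharing its left polynomial, whence $0=(\cp^{(n)},\cpsi_n)_{w,\cI}\to(\cp^\ast,\cpsi)_{w,\cI}$. Thus $\cp^\ast$ is a nonzero element of $\mathcal{Q}^{m}_{\calpha_\ast,w,r}(\cI)$ with $\cp^\ast(0)=\cp^\ast(1)=0$, contradicting \autoref{thm:Zeros of orthogonal IFE functions}.

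The main obstacle is the degenerate case $\calpha_\ast\in\{0,1\}$, say $\calpha_\ast=0$, where $\cI^-$ collapses and the interior argument breaks. Here I would parametrize by the right piece $\cp^{(n)}_+$, whose interval $(\calpha_n,1)$ tends to $(0,1)$, so its coefficients are uniformly bounded and converge along a subsequence to some $q\in\P^m([0,1])$. The jump conditions $(\cp^{(n)}_-)^{(k)}(\calpha_n)=r_k^{-1}(\cp^{(n)}_+)^{(k)}(\calpha_n)$ keep the left derivatives bounded, so $\norm{\cp^{(n)}_-}_{0,\cI^-}^2\le C\calpha_n\to 0$ and hence $\norm{q}_{0,(0,1)}=1$, so $q\neq 0$. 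Letting the right pieces of the test functions range over $\P^{m-1}$ shows $q$ is $L^2(0,1)$-orthogonal to $\P^{m-1}$ with weight $w_+$, i.e. a nonzero degree-$m$ Legendre-type polynomial, which has $m$ interior roots, so $q(0),q(1)\neq 0$. Then $\cp^{(n)}(1)=\cp^{(n)}_+(1)\to q(1)\neq 0$ and $\cp^{(n)}(0)=\cp^{(n)}_-(0)\to r_0^{-1}q(0)\neq 0$, again contradicting the vanishing of the endpoint values. This limiting and degeneration analysis, rather than any algebra, is where the real work lies.
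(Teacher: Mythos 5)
Your proof is correct, but it takes a genuinely different route from the paper's. The paper argues constructively in two regimes: for $\min(\ch_-,\ch_+)$ below an explicit threshold $\delta(m,w,r)$ it proves a quantitative ``almost orthogonality'' estimate $\abs{(\cp_s,q_i)_{\cI^s}}\le C\ch_{s'}\norm{\cp_s}_{0,\cI^s}$ via the same scaling trick as in \autoref{lem:norm_of_extension}, and combines it with a norm equivalence on $\P^m([0,1])$ to bound $\abs{\cp(0)}+\abs{\cp(1)}$ from below; for $\calpha\in[\delta,1-\delta]$ it runs Gram--Schmidt on the canonical basis to build a generator $\cO$ of $\cQ$ whose coefficients are \emph{rational functions} of $\calpha$, so that the ratio $\mathcal{J}^m_{w,r}(\calpha)=\bigl(\cO(0)^2+\cO(1)^2\bigr)/\norm{\cO}_{0,\cI}^2$ is continuous and strictly positive on the compact interval, hence uniformly bounded below. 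You instead run a compactness--contradiction argument: normalize generators $\cp^{(n)}$, extract $\calpha_n\to\calpha_\ast$, and derive a contradiction from \autoref{thm:Zeros of orthogonal IFE functions} together with the at-most-$m$-roots bound of \autoref{thm:Zeros of IFE basis functions} (a nonzero element of $\cQ$ has exactly $m$ interior roots, so it cannot also vanish at both endpoints). Your interior case silently establishes the continuity in $\calpha$ that the paper obtains via the rational-function lemma, and your degenerate case ($\calpha_\ast\in\{0,1\}$) — where the left piece carries $L^2$ mass $O(\sqrt{\calpha_n})$ and the limit $q$ is a nonzero multiple of the shifted Legendre polynomial of degree $m$, hence $q(0),q(1)\ne 0$ — is the soft analogue of the paper's almost-orthogonality estimate; both proofs hinge on the same structural fact that near a degenerate interface the orthogonal IFE function collapses to a Legendre-type polynomial on the dominant subinterval. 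The trade-off: your argument is conceptually shorter and avoids the Gram--Schmidt machinery, but is non-constructive (no explicit constant or threshold $\delta$), whereas the paper's proof yields in principle computable constants and exhibits the rational dependence of $\cO$ on $\calpha$, which fits its overall quantitative scaling-argument framework. Your treatment of the second inequality via \autoref{lemma:inverse_inequality} coincides with the paper's. One small point worth making explicit if you write this up: in the degenerate case the \emph{test} functions' left pieces must also be shown bounded (their derivatives at $\calpha_n$ carry factors $r_k^{-1}$ applied to a fixed right polynomial), so that the left contribution to the orthogonality pairing vanishes in the limit; your sketch asserts this only for the trial function, though the same one-line argument covers it.
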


\begin{proof} The inequality on the right follows from the inverse inequality \eqref{eqn:inverse_inequality} for the IFE funcdtions. For a proof of the inequality on the left, see \autoref{sec:proof_of_lemma_4}.

\end{proof}

\section{An immersed Bramble-Hilbert lemma and the approximation capabilities of the LIFE space}
\label{sec:Bramble_Hilbert}
In this section, we will develop a new version of Bramble-Hilbert lemma that applies to functions in  $\H^{m+1}_{\calpha,r}(\cI)$ and its IFE counterpart. This lemma will serve as a fundamental tool for investigating the approximation capability of IFE spaces. In the discussions below, we will use $\mathbbm{1}_{B}$ for the indicator function of a set $B\subset\mathbb{R}$ and we define $w_i=r_i\mathbbm{1}_{\cI^-}+\mathbbm{1}_{\cI^+}$ for $i=0,1,\dots,m$.



\begin{theorem}\label{thm:immersed_bramble_hilbert}
 Let ${\tilde m} \geq m\ge 0, \{r_k\}_{k=0}^{\tilde m} \subset\mathbb{R}_+$, $\calpha\in(0,1)$, and $v\in \H^{m+1}_{\calpha,r}(\cI)$.
 Assume $(w_i,v^{(i)})_{\cI}=0$ for $i=0,1,\dots, m$. Then, there exists $C(i,r)>0$ independent of $\calpha$ such that
\begin{align}
\norm{v}_{i,\cI}\le C(i,r)|v|_{i,\cI},~i = 0, 1, \cdots, m+1. \label{eq:immersed_bramble_hilbert_1}
\end{align}
\end{theorem}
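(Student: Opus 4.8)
The plan is to bypass the classical compactness/scaling route and instead run a constructive induction on $i$ that produces $C(i,r)$ explicitly and manifestly independent of $\calpha$. Two structural observations drive this. First, the kernel of $|\cdot|_{i,\cI}$ inside $\H^{m+1}_{\calpha,r}(\cI)$ is exactly the RIFE space $\V^{i-1}_{\calpha,r}(\cI)$ (piecewise polynomials of degree $\le i-1$ obeying the jump conditions, since $i-1\le m$), and on this kernel the hypotheses $(w_j,v^{(j)})_{\cI}=0$ act as unisolvent functionals: if $\cp\in\V^{i-1}_{\calpha,r}(\cI)$ then $\cp^{(i-1)}$ is piecewise constant with $\cp^{(i-1)}_+(\calpha)=r_{i-1}\cp^{(i-1)}_-(\calpha)$, and since $\ch_-+\ch_+=1$ a direct computation gives $(w_{i-1},\cp^{(i-1)})_{\cI}=r_{i-1}\cp^{(i-1)}_-(\calpha)$; vanishing of the functionals forces $\cp\equiv 0$ by downward recursion in the derivative order. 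This confirms solvability, but I want a quantitative $\calpha$-uniform bound, so I will not argue by compactness.

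Second, differentiation intertwines the entire structure. If $v\in\H^{m+1}_{\calpha,r}(\cI)$ then $v'\in\H^{m}_{\calpha,\tau(r)}(\cI)$ with $\tau$ the shift operator appearing in \eqref{eqn:DiffOp_EOp_exchange}, and the functionals shift accordingly: $(w_{j+1},v^{(j+1)})_{\cI}=(\tilde w_j,(v')^{(j)})_{\cI}$ where $\tilde w_j=r_{j+1}\mathbbm{1}_{\cI^-}+\mathbbm{1}_{\cI^+}$ is precisely the order-$j$ weight for the shifted sequence $\tau(r)$. Hence the hypotheses for $v$ at orders $1,\dots,i-1$ become exactly the hypotheses for $v'$ at orders $0,\dots,i-2$, while $|v'|_{i-1,\cI}=|v|_{i,\cI}$ and $\norm{v'}_{i-1,\cI}^2=\sum_{j=1}^{i}|v|_{j,\cI}^2$. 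This is the engine: index $i$ for the sequence $r$ reduces to index $i-1$ for the sequence $\tau(r)$.

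Everything thus collapses to the base case $i=1$, which I expect to be the only real difficulty: for $w\in\H^1_{\calpha,q}(\cI)$ with $(q_0\mathbbm{1}_{\cI^-}+\mathbbm{1}_{\cI^+},w)_{\cI}=0$, prove $\norm{w}_{0,\cI}\le C(q_0)\,|w|_{1,\cI}$ uniformly in $\calpha$. I would apply the Poincaré–Wirtinger inequality on each subinterval, writing $\bar w_\pm$ for the means over $\cI^\pm$, to get $\norm{w}_{0,\cI}^2\le \pi^{-2}|w|_{1,\cI}^2+\ch_-\bar w_-^2+\ch_+\bar w_+^2$ (using $\ch_\pm\le 1$). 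The crux is controlling the mean terms uniformly. Setting $a=w_-(\calpha)$, so $w_+(\calpha)=q_0 a$, the elementary bound $|\bar w_\pm-(\text{trace})|\le\int_{\cI^\pm}|w'|\le\sqrt{\ch_\pm}\,|w|_{1,\cI}$ substituted into the constraint $q_0\ch_-\bar w_-+\ch_+\bar w_+=0$ makes the leading terms collapse through $q_0a(\ch_-+\ch_+)=q_0a$, leaving $|q_0a|\le(q_0+1)|w|_{1,\cI}$. Thus the trace $a$, and then both means, are bounded by $|w|_{1,\cI}$ with a constant depending only on $q_0$. It is exactly the cancellation $\ch_-+\ch_+=1$ — the same one that makes the functionals clean — that eliminates the $\calpha$-dependence here.

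Finally I would assemble the induction. Granting the claim at index $i-1$ for every positive sequence, I apply it to $v'$ with sequence $\tau(r)$ to obtain $\sum_{j=1}^{i}|v|_{j,\cI}^2\le C^2|v|_{i,\cI}^2$, and I use the base inequality together with $(w_0,v)_{\cI}=0$ to bound $\norm{v}_{0,\cI}\le C(r_0)|v|_{1,\cI}\le C(r_0)\,C\,|v|_{i,\cI}$; adding these yields $\norm{v}_{i,\cI}\le C(i,r)|v|_{i,\cI}$, and tracking constants through the recursion confirms dependence only on $i$ and the $r_k$, never on $\calpha$. The case $i=0$ is trivial since $\norm{v}_{0,\cI}=|v|_{0,\cI}$. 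I expect the $\calpha$-uniform Poincaré base step, specifically the uniform control of the trace value $a$, to be the sole genuine obstacle; the shift identity reduces all remaining work to bookkeeping.
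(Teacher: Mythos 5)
Your proof is correct, and its engine --- differentiating so that $v'\in\H^{m}_{\calpha,\tau(r)}(\cI)$, with the weights $w_{j+1}$ becoming the order-$j$ weights of the shifted sequence, thereby reducing index $i$ for $r$ to index $i-1$ for $\tau(r)$ --- is exactly the recursion the paper uses. Where you genuinely diverge is the base step. The paper handles it more directly: since $v(\calpha^+)=r_0 v(\calpha^-)$, the product $w_0 v$ is continuous across $\calpha$ and hence lies in $H^1(\cI)$; writing $w_0(x)v(x)-w_0(y)v(y)=\int_y^x w_0(z)v'(z)\,dz$ (as $w_0$ is piecewise constant), integrating in $x$ over $\cI$, and invoking $(w_0,v)_{\cI}=0$ kills the first term and yields the pointwise bound $\abs{v(y)}\le \max\left(r_0,r_0^{-1}\right)\abs{v}_{1,\cI}$ for every $y$, whence $\norm{v}_{0,\cI}\le \max\left(r_0,r_0^{-1}\right)\abs{v}_{1,\cI}$ and, after shifting, $\abs{v}_{i,\cI}\le \max\left(r_i,r_i^{-1}\right)\abs{v}_{i+1,\cI}$ with a fully explicit constant. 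Your Poincar\'e--Wirtinger route --- sub-interval means, the mean-to-trace bound $\sqrt{\ch_{\pm}}\,\abs{v}_{1,\cI}$, and the cancellation $\ch_-+\ch_+=1$ in the constraint $r_0\ch_-\bar{w}_-+\ch_+\bar{w}_+=0$ to pin down the trace value --- is valid and uniform in $\calpha$ for the same structural reason (the weighted jump condition makes the interface contributions telescope), but it is longer and delivers only an $L^2$ conclusion where the paper's argument gives an $L^\infty$ bound for free with the cleaner constant $\max\left(r_0,r_0^{-1}\right)$. Your kernel/unisolvence observation is correct but not needed for the estimate. One bookkeeping point you use implicitly and should state: for fixed $i\le m+1$ only the orthogonality conditions of orders $0,\dots,i-1$ are consumed, so the induction hypothesis applied to $v'$ with the sequence $\tau(r)$ indeed has available all the conditions it requires.
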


\begin{proof}
Let $v\in \H^{m+1}_{\calpha,r}(\cI)$ and assume $(w_i,v^{(i)})_{\cI}=0$ for $i=0,1,\dots, m$.
Because $w_0$ is such that $w_0v$ is continuous and since $v_{\mid \cI^{\pm}}\in H^1(\cI^{\pm})$, we have $w_0v\in H^1(\cI)$. Therefore, for any given $x,y\in\cI$, we have
$$w_0(x)v(x)-w_0(y)u(y)=\int_{y}^x w_0(z)v'(z)\ dz$$
We integrate this identity on $\cI$ with respect to $x$ and use $(w_0, v^{(0)})_{\cI} = 0$ to get
$$-w_0(y)v(y)=\int_{0}^1 \int_{y}^x w_0(z)v'(z)\ dz\ dx, \quad \forall y\in \cI.$$
Taking the absolute value and applying the Cauchy-Schwarz inequality, we get
$$|v(y)|\le \frac{1}{\min(1,r_0)} |w_0(y)v(y)|\le\frac{\max(1,r_0)}{\min(1,r_0)}|v|_{1,\cI}=\max\left(r_0,r_0^{-1}\right)|v|_{1,\cI},$$
which implies $\norm{v}_{0, \cI} \leq \max\left(r_0,r_0^{-1}\right)|v|_{1,\cI}$. Since $v'\in \H^{m}_{\calpha,\tau(r)}(\cI)$, where $\tau$ is the shift operator described in \eqref{eqn:first_shift_operator}, we can use the same reasoning to show that
$$|v|_{1,\check{I}} \le \max\left(r_1,r_1^{-1}\right)|v|_{2,\check{I}}.$$
Repeating the same arguments, we can obtain
\begin{align}
|v|_{i,\check{I}} \le \max\left(r_i,r_i^{-1}\right)|v|_{i+1,\check{I}}, ~~i = 0, 1, \cdots, m
\label{eq:immersed_bramble_hilbert_2}
\end{align}
which leads to \eqref{eq:immersed_bramble_hilbert_1} with
$$C(i,r) = \sqrt{1+\sum_{k=0}^{i}\prod_{j=k}^i \max\left(r_j^2,r_j^{-2}\right)}.$$
\end{proof}

\commentout{
        \begin{definition}
             Let $m\ge 0, \{r_k\}_{k=0}^m\subset\mathbb{R}_+$, $\calpha\in(0,1)$ and $v\in \X^m_{\calpha,r}(\cI)$. Let $\cpi v$ be an element of $\cV$ such that
             \begin{equation}\int_{\check I} w_i(x)\frac{d^i}{dx^i}\left(v(x)-\cpi v (x)\right)\ dx=0,\qquad \forall i=0,1,\dots,m.
             \label{eqn:Grossman_orthogonality}
             \end{equation}
        \end{definition}
}

\begin{lemma}\label{lem:hyper_orthogonality}
 Let ${\tilde m} \geq m\ge 0, \{r_k\}_{k=0}^{\tilde m} \subset\mathbb{R}_+$, $\calpha\in(0,1)$ and $v\in \H^{m+1}_{\calpha,r}(\cI)$. Then, there is a unique $\cpi v\in\cV$ that satisfies
\begin{equation}
\int_{\check I} w_i(x)\frac{d^i}{dx^i}\left(v(x)-\cpi v (x)\right)\ dx=0,\qquad \forall i=0,1,\dots,m.     \label{eqn:hyper_orthogonality_1}
\end{equation}
\end{lemma}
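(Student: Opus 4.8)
The plan is to read \eqref{eqn:hyper_orthogonality_1} as a square linear system on the $(m+1)$-dimensional space $\cV$ and then to invoke \autoref{thm:immersed_bramble_hilbert} to certify that the associated coefficient map is invertible. First I would introduce the linear functionals $\ell_i(\cp)=\int_{\cI}w_i(x)\cp^{(i)}(x)\,dx$ for $i=0,1,\dots,m$ and assemble them into the linear map $L:\cV\to\R^{m+1}$, $L(\cp)=(\ell_0(\cp),\dots,\ell_m(\cp))$. In this language, \eqref{eqn:hyper_orthogonality_1} asks for a $\cpi v\in\cV$ with $L(\cpi v)=(\ell_0(v),\dots,\ell_m(v))=:L(v)$, where the right-hand side is a fixed vector determined by the data $v$. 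Since $\dim\cV=m+1$ by \autoref{lem:Dimension and uniqueness of extensions} and the target $\R^{m+1}$ also has dimension $m+1$, both existence of $\cpi v$ for every right-hand side and its uniqueness reduce to the injectivity of $L$.

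To establish injectivity, I would take any $\cp\in\cV$ with $L(\cp)=0$, i.e.\ $(w_i,\cp^{(i)})_{\cI}=0$ for $i=0,1,\dots,m$. Every function in $\cV$ is piecewise polynomial of degree at most $m$ and satisfies the extended jump conditions, so $\cp\in\H^{m+1}_{\calpha,r}(\cI)$ and the hypotheses of \autoref{thm:immersed_bramble_hilbert} are met. Applying estimate \eqref{eq:immersed_bramble_hilbert_1} with $i=m+1$ yields $\norm{\cp}_{m+1,\cI}\le C(m+1,r)\,|\cp|_{m+1,\cI}$. But $\cp_{\mid\cI^\pm}\in\P^m(\cI^\pm)$ implies $\cp^{(m+1)}\equiv 0$ on each of $\cI^-$ and $\cI^+$, so $|\cp|_{m+1,\cI}=0$ and hence $\norm{\cp}_{m+1,\cI}=0$, forcing $\cp\equiv 0$. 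Thus $\ker L=\{0\}$, $L$ is bijective, and the sought $\cpi v$ exists and is uniquely given by the preimage $L^{-1}(L(v))$.

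The only nontrivial ingredient is the injectivity step, and the key realization is that it is supplied for free by \autoref{thm:immersed_bramble_hilbert}: an IFE function whose first $m+1$ weighted derivative-moments vanish must itself vanish, because its top seminorm is zero while the immersed Bramble-Hilbert inequality bounds its full norm by that seminorm. Everything else is the standard equal-dimension square-system count. The points I would take care to verify are that $\cV\subset\H^{m+1}_{\calpha,r}(\cI)$, so that the theorem applies verbatim, and that the $(m+1)$-st seminorm of a degree-$m$ piecewise polynomial indeed vanishes; both are immediate from the definitions, so I expect no genuine obstacle beyond stating the reduction cleanly.
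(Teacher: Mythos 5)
Your proposal is correct, and it reaches the conclusion by a genuinely different mechanism than the paper does. The paper's proof is constructive linear algebra: it expands the unknown in the canonical basis, $\cpi v=\sum_{j=0}^m c_j\N^j_{\calpha,r}$, and observes that since $(\N^j_{\calpha,r})^{(i)}\equiv 0$ for $i>j$, the conditions \eqref{eqn:hyper_orthogonality_1} become a triangular system $A\c=\b$ whose diagonal entries $A_{i,i}=(w_i,(\N^i_{\calpha,r})^{(i)})_{\cI}$ are strictly positive, so $A$ is invertible and existence and uniqueness drop out simultaneously. You instead recast \eqref{eqn:hyper_orthogonality_1} abstractly as $L(\cpi v)=L(v)$ for the moment map $L:\cV\to\R^{m+1}$, reduce everything to injectivity of $L$ via the dimension count of \autoref{lem:Dimension and uniqueness of extensions}, and get injectivity by feeding a kernel element $\cp$ into \autoref{thm:immersed_bramble_hilbert} with $i=m+1$ and noting that $\abs{\cp}_{m+1,\cI}=0$ for a piecewise polynomial of degree at most $m$. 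The two points you flag do hold: $\cV\subset\H^{m+1}_{\calpha,r}(\cI)$ is immediate from the definitions of the two spaces, and there is no circularity, since \autoref{thm:immersed_bramble_hilbert} is proved before and independently of this lemma. As for what each approach buys: your route is shorter, reuses machinery already in place, and reveals the lemma as the ``rigid'' case of the immersed Bramble--Hilbert estimate (an IFE function with all weighted derivative-moments zero must vanish); its trade-offs are that it is nonconstructive and leans on a strictly heavier theorem, whereas the paper's one-line triangularity computation is elementary, self-contained, and exhibits the explicit matrix one would assemble to actually compute $\cpi v$.
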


\begin{proof}
For $v\in \H^{m+1}_{\calpha,r}(\cI)$, to see that $\cpi v$ exists and is unique, we consider the problem of finding $\cp\in \cV$ such that
\begin{equation}
  (w_i,\cp^{(i)})_{\cI} =  (w_i,v^{(i)})_{\cI},\qquad\text{for } i=0,1,\dots, m.\label{eqn:pi_check_def}
\end{equation}
By \autoref{lem:Dimension and uniqueness of extensions}, we can express $\cp$ in terms of the canonical basis
$$\cp= \sum_{j=0}^m c_j\N^{j}_{\calpha,r}.
$$
Then, by \eqref{eqn:pi_check_def}, the coefficients $\c$ of $\cp$ are determined by the linear system $A\c=\mathbf{b}$, where $A = (A_{i,j})$ is a triangular matrix with $A_{i,j}=(w_i,(\N^j_{\calpha,r})^{(i)})_{\cI}, ~0 \leq i, j \leq m$ and diagonal entries
$$A_{i,i}= (w_i,(\N^i_{\calpha,r})^{(i)})_{\cI} = i!(h_-+r_ih_+) \ne 0, ~0 \leq i \leq m.
$$
Therefore, $A$ is invertible and $\cpi v = \cp$ is uniquely determined by \eqref{eqn:hyper_orthogonality_1}.
\end{proof}

We note that the mapping $\cpi: v\in \H^{m+1}_{\calpha,r}(\cI) \mapsto \cpi v\in\cV$ is linear because
of the linearity of
 integration.

 We now present an immersed version of the Bramble-Hilbert lemma \cite{brambleEstimationLinearFunctionals1970} which can be considered a generalization of the one-dimensional Bramble-Hilbert lemma in the sense that if $r\equiv 1$, then, this immersed Bramble-Hilbert lemma recovers the classical Bramble-Hilbert lemma.

\begin{lemma}\label{lem:projection_error_reference}
Let ${\tilde m} \geq m\ge i\ge 0, \{r_k\}_{k=0}^{\tilde m}\subset\mathbb{R}_+$, $\calpha\in(0,1)$. Assume $\cP: \H^{m+1}_{\calpha,r}(\cI)\to \cV$ is a  linear map that satisfies the following two conditions
\begin{enumerate}
\item $\cP$ is a projection on $\cV$ in the sense that
\begin{equation}
\cP\cp = \cp,\quad \forall \cp\in \cV. \label{eqn:RIFE_preserving}
\end{equation}

\item
There exists an integer $j$,  $0 \leq j \leq m+1$ such that $\cP$ is bounded with respect to the norm $\norm{\cdot}_{j,\cI}$ as follows:
\begin{align}
\norm{\cP v}_{i,\cI}\le C \norm{v}_{j,\cI},\qquad\forall v\in  \H^{m+1}_{\calpha,r}(\cI).
\label{eqn:projection_error_reference_1}
\end{align}

\end{enumerate}
Then, there exists $C(m,r)>0$ independent of $\calpha$, such that
\begin{align}
\norm{v-\cP v}_{i,\cI}\le C(m,r)\left(1+\norm{\cP}_{i, j, \cI}\right)|v|_{m+1,\cI},\quad
v\in \H^{m+1}_{\calpha,r}(\cI), \label{eqn:projection_error_reference_2}
\end{align}
where
\begin{align*}
\norm{\cP}_{i, j, \cI}=\sup\left\{ \norm{\cP v }_{i,\cI}\mid v\in   \H^{m+1}_{\calpha,r}(\cI) \text{ and } \norm{v}_{j,\cI}=1\right\}.
\end{align*}

\end{lemma}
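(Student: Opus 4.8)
The plan is to run the classical Bramble--Hilbert argument, but with the polynomial that annihilates the moments replaced by the immersed projection $\cpi v$ furnished by \autoref{lem:hyper_orthogonality}. The key preliminary observation is that the error $v-\cP v$ is invariant under subtracting any element of $\cV$: since $\cpi v\in\cV$ and $\cP$ reproduces $\cV$ by \eqref{eqn:RIFE_preserving}, setting $w=v-\cpi v$ and using linearity gives $\cP w=\cP v-\cP(\cpi v)=\cP v-\cpi v$, whence $w-\cP w=(v-\cpi v)-(\cP v-\cpi v)=v-\cP v$. Thus it suffices to estimate $\norm{w-\cP w}_{i,\cI}$, and the triangle inequality together with the definition of $\norm{\cP}_{i,j,\cI}$ yields
\[
\norm{v-\cP v}_{i,\cI}=\norm{w-\cP w}_{i,\cI}\le \norm{w}_{i,\cI}+\norm{\cP}_{i,j,\cI}\,\norm{w}_{j,\cI}.
\]

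Next I would exploit the defining property of $\cpi v$. By \autoref{lem:hyper_orthogonality} the function $w=v-\cpi v$ satisfies the moment conditions $(w_i,w^{(i)})_{\cI}=0$ for $i=0,1,\dots,m$, and because $\cV\subset\H^{m+1}_{\calpha,r}(\cI)$ (each piece of an element of $\cV$ is a degree-$m$ polynomial, hence smooth, and the interface conditions up to order $m$ hold) we also have $w\in\H^{m+1}_{\calpha,r}(\cI)$. These are exactly the hypotheses of \autoref{thm:immersed_bramble_hilbert} applied to $w$, which therefore delivers $\norm{w}_{i,\cI}\le C(i,r)\,\abs{w}_{i,\cI}$ and $\norm{w}_{j,\cI}\le C(j,r)\,\abs{w}_{j,\cI}$, with constants independent of $\calpha$.

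It then remains to control the seminorms $\abs{w}_{i,\cI}$ and $\abs{w}_{j,\cI}$ by the single seminorm $\abs{v}_{m+1,\cI}$. Since $\cpi v$ is piecewise a polynomial of degree at most $m$, its $(m+1)$-st piecewise derivative vanishes, so $\abs{w}_{m+1,\cI}=\abs{v}_{m+1,\cI}$. Applying \autoref{thm:immersed_bramble_hilbert} to $w$ also supplies the seminorm chain \eqref{eq:immersed_bramble_hilbert_2}, i.e.\ $\abs{w}_{\ell,\cI}\le \max(r_\ell,r_\ell^{-1})\abs{w}_{\ell+1,\cI}$ for $0\le \ell\le m$; iterating from $\ell$ up to $m+1$ gives $\abs{w}_{\ell,\cI}\le \big(\prod_{k=\ell}^{m}\max(r_k,r_k^{-1})\big)\abs{v}_{m+1,\cI}$ for every $0\le \ell\le m+1$. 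Substituting the bounds for $\ell=i$ and $\ell=j$ into the displayed inequality and collecting all the $\calpha$-independent factors $C(\cdot,r)$ and products of $\max(r_k,r_k^{-1})$ into one constant $C(m,r)$ produces the claimed estimate $\norm{v-\cP v}_{i,\cI}\le C(m,r)\big(1+\norm{\cP}_{i,j,\cI}\big)\abs{v}_{m+1,\cI}$.

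I expect the only genuinely delicate point to be the bookkeeping that guarantees $\calpha$-independence throughout: every constant entering the estimate --- the norm-equivalence constants $C(i,r)$ from \autoref{thm:immersed_bramble_hilbert}, the products coming from \eqref{eq:immersed_bramble_hilbert_2}, and the reproduction identity --- must be traced back to quantities depending only on $m$ and the fixed sequence $r$, never on the interface location. The structural heart of the argument is the single choice $w=v-\cpi v$, which makes the projection-reproduction identity and the moment conditions of the immersed Bramble--Hilbert theorem hold \emph{simultaneously}; once that choice is in place, the remaining steps reduce to routine triangle-inequality and seminorm manipulations.
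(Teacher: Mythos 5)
Your proposal is correct and follows essentially the same route as the paper: both hinge on subtracting the moment-matching element $\cpi v$ from \autoref{lem:hyper_orthogonality}, exploiting the reproduction property $\cP\cpi v=\cpi v$, and then invoking \autoref{thm:immersed_bramble_hilbert} to convert norms of $v-\cpi v$ into the seminorm $|v|_{m+1,\cI}$ with $\calpha$-independent constants. The only (cosmetic) difference is bookkeeping: the paper funnels both triangle-inequality terms through $\norm{v-\cpi v}_{j,\cI}\le\norm{v-\cpi v}_{m+1,\cI}$ and applies the Bramble--Hilbert bound once, whereas you estimate $\norm{w}_{i,\cI}$ and $\norm{w}_{j,\cI}$ separately and chain the seminorms, which is equally valid.
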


\begin{proof}
Since $\cP$ is a projection in the sense of \eqref{eqn:RIFE_preserving}, we have $\cP\cpi v=\cpi v$. Using the triangle inequality and \eqref{eqn:projection_error_reference_1}, we obtain
\begin{align*}\norm{v-\cP v}_{i,\cI}&\le \norm{v-\cpi v}_{i,\cI}+\norm{\cP\left(v-\cpi v\right)}_{i,\cI}, \\
&\le \left(1+\norm{\cP}_{i, j, \cI}\right) \norm{v-\cpi v}_{j,\cI} \leq
\left(1+\norm{\cP}_{i, j, \cI}\right) \norm{v-\cpi v}_{m+1,\cI}
,\qquad \forall v\in \H^{m+1}_{\calpha,r}(\cI).
\end{align*}
Then, applying \autoref{lem:hyper_orthogonality} and  \autoref{thm:immersed_bramble_hilbert}
to the right hand side of the above estimate leads to \eqref{eqn:projection_error_reference_2}.

\end{proof}

Next, we extend the results of \autoref{lem:projection_error_reference} to the physical interface element $\Ik=[x_{k_0-1},x_{k_0-1}+h]$. Following the tradition in finite element analysis, for every function
$\p$ defined on the interface element $\Ik$, we can map it to a function $\mathcal{M}\p = \cp$ defined on
the reference interval $\cI$ by the standard affine transformation:
\begin{equation}
\mathcal{M}\p(\xi) = \cp(\xi)=\p(x_{k_0-1}+h\xi),\quad \xi\in \cI = [0,1].\label{eqn:translation}
\end{equation}
Furthermore, given a mapping $P^m_{\alpha,r}:\H^{m+1}_{\alpha,r}(\Ik)\to \V^m_{\alpha,r}(\Ik)$, we can use
this affine transformation to introduce a mapping
$\cP: \H^{m+1}_{\calpha,r}(\cI)\to \cV$ such that
\begin{equation}
(\cP \cv)(\xi) = (\Pr v)(x_{k_0-1}+h\xi) = (\Pr v)(x) ~~\text{with}~~
\xi \in \cI ~~\text{or}~~ x = x_{k_0-1}+h\xi \in \Ik. \label{eqn:Projection_LIFE_RIFE}
\end{equation}
It can be verified that the mappings $\mathcal{M}, P^m_{\alpha,r}$ and $\cP$ satisfy the following commutative diagram:
\commentout{
         We first note the LIFE space $\Vk$ is isomorphic to $\cV$ where $h \calpha +x_{k_0-1}=\alpha$ via the standard affine transformation that maps $\p$ to $\cp$ such that

        It is important to note that the sequence $\{r_k\}_{k=1}^{\tilde m}$ that appears in $\cV$ and $\Vk$ is the same since the error estimates obtained earlier depend on $\{r_k\}_{k=1}^m$. To extend the results of \autoref{lem:projection_error_reference} to $\Ik$, we consider a linear bounded operator $\Pr$ defined as

        where $\cv$ is defined similarly to $\cp$ in \eqref{eqn:translation}. We can summarize this construction in the following commutative diagram, where $\mathcal{M}v=\check{v}$.
}
\begin{equation}\includegraphics[valign=c,scale=1.0]{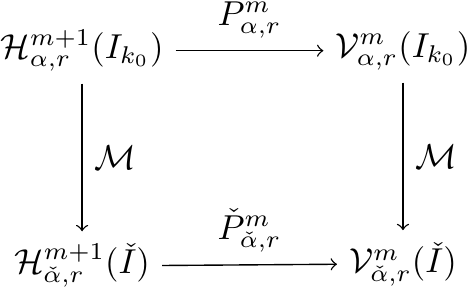}
\label{eqn:phys_ref_diagram}
\end{equation}

We now use the immersed Bramble-Hilbert lemma in the scaling argument to obtain estimates for the  projection error $v-\Pr v$.

\begin{theorem} \label{thm:error_of_general_projection}
 Let ${\tilde m} \geq m\ge i \ge 0, \{r_k\}_{k=0}^{\tilde m} \subset\mathbb{R}_+$, $\calpha\in(0,1)$.
Assume $P^m_{\alpha,r}:\H^{m+1}_{\alpha,r}(\Ik)\to \V^m_{\alpha,r}(\Ik)$ is a linear operator such that
$\cP$ defined by \eqref{eqn:Projection_LIFE_RIFE} satisfies the assumption of
\autoref{lem:projection_error_reference} for an integer $j$ with $0 \leq j \leq m+1$. Then, there exists $C(m,r)>0$ independent of $\alpha$ such that
\begin{equation}
|v-\Pr v|_{i,\Ik}\le Ch^{m+1-i}\left(1+\norm{\cP}_{i, j, \cI}\right)|v|_{m+1,\Ik},\quad
v\in \H_{\alpha,r}^{m+1}(\Ik).\label{eqn:approximation_capabilities_general}
\end{equation}

\end{theorem}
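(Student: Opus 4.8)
The plan is to run the standard scaling argument, transferring the reference-element estimate of \autoref{lem:projection_error_reference} to the physical interface element $\Ik$ through the affine map $\mathcal{M}$ defined in \eqref{eqn:translation}. The two facts that make this work are the commutativity $\mathcal{M}\circ\Pr=\cP\circ\mathcal{M}$ encoded in \eqref{eqn:Projection_LIFE_RIFE} and the clean way in which the piecewise Sobolev seminorms scale under $\mathcal{M}$.

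First I would record the seminorm scaling. For the change of variables $x=x_{k_0-1}+h\xi$ and any $w$ with $\cw=\mathcal{M}w$, one has $\cw^{(i)}(\xi)=h^i w^{(i)}(x)$ on each side of the interface, so that
\[
\abs{\cw}_{i,\cI}^2=\int_0^1\abs{\cw^{(i)}(\xi)}^2\,d\xi=h^{2i-1}\int_{\Ik}\abs{w^{(i)}(x)}^2\,dx=h^{2i-1}\abs{w}_{i,\Ik}^2,
\]
where the interface of $\Ik$ is sent to $\calpha$, so the identity remains valid for the piecewise seminorms introduced in \autoref{sec:notation}; equivalently $\abs{w}_{i,\Ik}=h^{1/2-i}\abs{\cw}_{i,\cI}$. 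I would also note that $\mathcal{M}$ maps $\H^{m+1}_{\alpha,r}(\Ik)$ onto $\H^{m+1}_{\calpha,r}(\cI)$ with the \emph{same} sequence $r$: since $w^{(k)}(\alpha^{\pm})$ and $\cw^{(k)}(\calpha^{\pm})$ differ only by the common factor $h^k$, the ratio $r_k$ defining the jump conditions in \eqref{eqn:defintion_of_H} is preserved. This guarantees that $\cv=\mathcal{M}v\in\H^{m+1}_{\calpha,r}(\cI)$ and hence that \autoref{lem:projection_error_reference} applies to $\cv$.

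Next I would chain the three ingredients together. Using the commutative diagram \eqref{eqn:Projection_LIFE_RIFE}, $\mathcal{M}(v-\Pr v)=\cv-\cP\cv$, so the scaling identity with $w=v-\Pr v$ together with the crude bound $\abs{\cdot}_{i,\cI}\le\norm{\cdot}_{i,\cI}$ gives
\[
\abs{v-\Pr v}_{i,\Ik}=h^{1/2-i}\abs{\cv-\cP\cv}_{i,\cI}\le h^{1/2-i}\norm{\cv-\cP\cv}_{i,\cI}.
\]
Applying \autoref{lem:projection_error_reference} to $\cv$ bounds $\norm{\cv-\cP\cv}_{i,\cI}$ by $C(m,r)\left(1+\norm{\cP}_{i,j,\cI}\right)\abs{\cv}_{m+1,\cI}$, and finally scaling the right-hand seminorm back via $\abs{\cv}_{m+1,\cI}=h^{m+1/2}\abs{v}_{m+1,\Ik}$ collapses the powers of $h$ to $h^{1/2-i}\cdot h^{m+1/2}=h^{m+1-i}$, which is exactly \eqref{eqn:approximation_capabilities_general}.

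I expect no deep obstacle; the argument is routine once the bookkeeping is in place. The one point that requires genuine care—and is the very reason the classical scaling argument was problematic for IFE spaces—is the invariance of the interface data under $\mathcal{M}$: I must confirm both that the sequence $r$ is unchanged (so that the reference space $\cV$ and \autoref{lem:projection_error_reference} really do apply) and that the constant $C(m,r)$ delivered by that lemma is independent of $\calpha$, which it is by construction. The entire potential $\calpha$-dependence is then isolated in the factor $\norm{\cP}_{i,j,\cI}$, whose uniform boundedness is a separate issue to be settled for each concrete projection.
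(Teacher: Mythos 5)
Your proposal is correct and follows essentially the same route as the paper: transfer the estimate of \autoref{lem:projection_error_reference} to $\Ik$ via the affine map \eqref{eqn:translation}, using the commutativity \eqref{eqn:Projection_LIFE_RIFE} and the scaling of the piecewise seminorms. If anything, your bookkeeping is slightly more careful than the paper's: your identities $\abs{w}_{i,\Ik}=h^{1/2-i}\abs{\cw}_{i,\cI}$ carry the correct half-powers of $h$ (the paper's intermediate relations in \eqref{eqn:random_5} contain compensating off-by-$h^{3/2}$ factors that cancel in the final exponent $h^{m+1-i}$), and you make explicit the preservation of the jump sequence $r$ under $\mathcal{M}$, which the paper leaves implicit in the diagram \eqref{eqn:phys_ref_diagram}.
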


\begin{proof}

The proof follows the same argument as for the classical case $r_k=1,\ k=0,1,\dots,m$ \cite{ciarletFiniteElementMethod2002}. We start by applying the change of variables $x\mapsto h^{-1}(x-x_{k_0-1})$ to we obtain
    \begin{equation}|v|_{m+1,\Ik} = h^{-1}h^{-m-1}|\cv|_{m+1,\cI},\qquad |v-\Pr v|_{i,\Ik} = h^{-1}h^{-i} |\cv-\cP \cv|_{i,\cI}.\label{eqn:random_5}\end{equation}
    Next, we combine \autoref{lem:projection_error_reference} and \eqref{eqn:random_5} to obtain
    \begin{align*}|v-\Pr v|_{i,\Ik}&= h^{-i-1}|\cv-\cP \cv|_{i,\cI}\le h^{-i-1}C(m,r)\left(1+\norm{\cP}_{i, j, \cI}\right)|\cv|_{m+1,\cI}\\
    &=h^{-i-1}C(m,r)\left(1+\norm{\cP}_{i, j, \cI}\right)h^{m+2}|v|_{m+1,\Ik}\\
    &=C(m,r)h^{m+1-i}\left(1+\norm{\cP}_{i, j, \cI}\right)|v|_{m+1,\Ik}.
    \end{align*}

\end{proof}

Nevertheless, the estimate \eqref{eqn:approximation_capabilities_general} does not directly lead to the convergence of $\Pr v$ to $v$ as $h \rightarrow 0$ unless we can show that $\norm{\cP}_{i, j, \cI}$
is uniformly bounded with respect to $\calpha \in \cI$, and this can be addressed by the
\textit{uniform boundedness} of $\Pr$ defined as follows.

\commentout{
        The previous theorem shows that the  norm of $\cP$ plays a role in the error estimates. In particular, the reference interface point $\calpha$ could affect the error estimate, this could be an issue since the relative position of $\alpha$ in $\Ik$ changes under mesh refinement which would change $\calpha$. Fortunately, the projection operators discussed in this paper are bounded by a constant independent of the interface  position relative to the mesh. We shall call such operators \textit{uniformly bounded LIFE/RIFE projections} and we define them below
}

\begin{definition}\label{def:uniform_boundedness}
 Let ${\tilde m} \ge m \ge 0, \{r_k\}_{k=0}^{\tilde m}\subset\mathbb{R}_+$, $\calpha \in \cI$, and
 let $\{\cP\}_{0<\calpha<1}$ be a collection of projections in the sense of
 \eqref{eqn:RIFE_preserving} such that $\cP: \H^{m+1}_{\calpha,r}(\cI)\to \cV$.
 We call $\{\cP\}_{0<\calpha<1}$ a uniformly bounded collection of RIFE projections provided that there exists a constant $C>0$ independent of $\calpha$ and an integer $j$ with $0 \leq j \leq m+1$ such that
\begin{equation}
\norm{\cP v}_{0,\cI}<C\norm{v}_{j,\cI},\ \forall v\in \H^{m+1}_{\calpha,r}(\cI), \qquad \forall \calpha\in(0,1), \label{eqn:def_uniformly_bounded}
\end{equation}
and the associated collection of maps $\{\Pr\}_{\alpha\in\overset{\circ}{I}_{k_0}}$ defined in \eqref{eqn:Projection_LIFE_RIFE} is called a uniformly bounded collection of LIFE projections.
\end{definition}

\begin{lemma}\label{lemma:uniform_bound_for_cP}
Let ${\tilde m} \ge m \ge 0, \{r_k\}_{k=0}^{\tilde m}\subset\mathbb{R}_+$, $\calpha \in \cI$. Assume $\{\cP\}_{0<\calpha<1}$ is a uniformly bounded collection of RIFE projections. Then, there exists a constant $C$ independent of $\calpha$ such that
\begin{align}
\norm{\cP v}_{i, \cI} \leq C\norm{ v}_{m+1,\cI}, ~\forall v\in  \H^{m+1}_{\calpha,r}(\cI), ~0 \leq i \leq m+1.  \label{eqn:uniform_bound_for_cP_1}
\end{align}
\end{lemma}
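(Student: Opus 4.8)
The plan is to exploit the fact that the output $\cP v$ always lies in the finite-dimensional RIFE space $\cV$, so that the $\calpha$-independent inverse inequality of \autoref{lemma:inverse_inequality} can be invoked to trade every higher-order seminorm of $\cP v$ for its $L^2$ norm, after which the uniform boundedness hypothesis \eqref{eqn:def_uniformly_bounded} closes the estimate. Since the inverse inequality constant and the uniform boundedness constant are both already known to be independent of $\calpha$, the resulting bound is automatically independent of $\calpha$ as required.

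Concretely, first I would fix $v\in\H^{m+1}_{\calpha,r}(\cI)$ and note that $\cP v\in\cV$. Writing the full norm as a sum of seminorms and applying \eqref{eqn:inverse_inequality} to each term, I obtain
\begin{equation*}
\norm{\cP v}_{i,\cI}^2=\sum_{k=0}^i|\cP v|_{k,\cI}^2\le (i+1)\,C(m,r)^2\,\norm{\cP v}_{0,\cI}^2,
\end{equation*}
so that $\norm{\cP v}_{i,\cI}\le \sqrt{i+1}\,C(m,r)\,\norm{\cP v}_{0,\cI}$. Next I would invoke the uniform boundedness assumption \eqref{eqn:def_uniformly_bounded}, which gives $\norm{\cP v}_{0,\cI}\le C\norm{v}_{j,\cI}$ for the distinguished index $j$ with $0\le j\le m+1$. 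Finally, since the $j$-norm is dominated by the $(m+1)$-norm whenever $j\le m+1$, I have $\norm{v}_{j,\cI}\le\norm{v}_{m+1,\cI}$; chaining these three inequalities yields \eqref{eqn:uniform_bound_for_cP_1} with a constant $\sqrt{i+1}\,C(m,r)\,C$ that depends only on $m$, $r$, and the uniform boundedness constant.

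There is essentially no serious obstacle here: the argument is a direct composition of two previously established facts. The only points requiring care are that the inverse inequality is applied to $\cP v$ (legitimate precisely because $\cP$ maps into $\cV$), and that one must use $j\le m+1$ to replace $\norm{v}_{j,\cI}$ on the right-hand side by $\norm{v}_{m+1,\cI}$. Both the constant $C(m,r)$ from \autoref{lemma:inverse_inequality} and the constant $C$ from \eqref{eqn:def_uniformly_bounded} are independent of $\calpha$, so no additional uniformity argument is needed.
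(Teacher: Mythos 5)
Your proposal is correct and follows essentially the same route as the paper: the paper likewise chains the inverse inequality of \autoref{lemma:inverse_inequality} (to pass from $\norm{\cP v}_{i,\cI}$ to $\norm{\cP v}_{0,\cI}$, which is legitimate because $\cP v\in\cV$), the uniform boundedness hypothesis \eqref{eqn:def_uniformly_bounded}, and the monotonicity $\norm{v}_{j,\cI}\le\norm{v}_{m+1,\cI}$. Your explicit summation of seminorms to justify the inverse-inequality step is merely a more detailed writing of the same estimate the paper states directly.
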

\begin{proof}
Assume that $\{\cP\}_{0<\calpha<1}$ is a uniformly bounded collection of RIFE projections, then,
\begin{eqnarray*}
\norm{\cP v}_{0,\cI} \leq C\norm{ v}_{j,\cI}
\end{eqnarray*}
for an integer $j$ with $0 \leq j \leq m+1$. By \autoref{lemma:inverse_inequality}, we further have
\begin{align*}
\norm{\cP v}_{i,\cI} \leq c(m,r)\norm{\cP v}_{0,\cI} \leq c(m,r) C\norm{ v}_{j,\cI} \leq
c(m,r) C\norm{ v}_{m+1,\cI}, ~~0 \leq i \leq m+1
\end{align*}
which implies the uniform boundedness stated in \eqref{eqn:uniform_bound_for_cP_1}.

\end{proof}
Now, we can derive an error bound for a collection of uniformly bounded LIFE projections that implies convergence.

\begin{theorem}
\label{thm:general_theorem_LIFE}
Let ${\tilde m} \ge m \ge 0, \{r_k\}_{k=0}^{\tilde m}\subset\mathbb{R}_+$, $\calpha \in \cI$. Assume that
$\{\Pr\}_{\alpha\in\overset{\circ}{I}_{k_0}}$ is an uniformly bounded collection of LIFE projections. Then, there exits a constant $C>0$ independent of $\alpha$ and $h$ such that
\begin{equation}
|v-\Pr v|_{i,\Ik}\le Ch^{m+1-i}|v|_{m+1,\Ik},\quad
\forall v\in \H_{\alpha,r}^{m+1}(\Ik),\quad \forall i=0,1,\dots,m.\label{eqn:general_theorem}
\end{equation}
\end{theorem}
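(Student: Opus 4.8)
The plan is to obtain \eqref{eqn:general_theorem} by feeding the uniform boundedness hypothesis into the scaling estimate \eqref{eqn:approximation_capabilities_general} of \autoref{thm:error_of_general_projection}; the only real content is to check that the operator norm $\norm{\cP}_{i, j, \cI}$ appearing there is bounded by a constant independent of the interface location $\calpha$, so that the factor $(1+\norm{\cP}_{i, j, \cI})$ can be absorbed into the generic constant.

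First I would unwind the hypothesis. Since $\{\Pr\}_{\alpha\in\overset{\circ}{I}_{k_0}}$ is a uniformly bounded collection of LIFE projections, \autoref{def:uniform_boundedness} says precisely that the associated collection $\{\cP\}_{0<\calpha<1}$ of RIFE projections defined by \eqref{eqn:Projection_LIFE_RIFE} is uniformly bounded: there is an integer $j$ with $0\le j\le m+1$ and a constant $C$ independent of $\calpha$ such that $\norm{\cP v}_{0,\cI}\le C\norm{v}_{j,\cI}$ for all $v\in\H^{m+1}_{\calpha,r}(\cI)$. Next I would repeat the short argument from the proof of \autoref{lemma:uniform_bound_for_cP}: because $\cP v\in\cV$, the $\calpha$-independent inverse inequality of \autoref{lemma:inverse_inequality} gives, for each $0\le i\le m+1$,
$$\norm{\cP v}_{i,\cI}\le c(m,r)\norm{\cP v}_{0,\cI}\le c(m,r)\,C\,\norm{v}_{j,\cI}.$$
Taking the supremum over $\norm{v}_{j,\cI}=1$ yields $\norm{\cP}_{i, j, \cI}\le c(m,r)\,C$, a bound depending only on $m$ and $r$ but not on $\calpha$. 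In particular $\cP$ satisfies both hypotheses of \autoref{lem:projection_error_reference} with this $j$ (it is a projection in the sense of \eqref{eqn:RIFE_preserving} by assumption, and it is now $\norm{\cdot}_{j,\cI}$-bounded), so \autoref{thm:error_of_general_projection} applies for every $i$ with $0\le i\le m$.

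Finally I would substitute the uniform bound on $\norm{\cP}_{i, j, \cI}$ into \eqref{eqn:approximation_capabilities_general}. The factor $1+\norm{\cP}_{i, j, \cI}$ is then controlled by a constant depending only on $m$ and $r$, which is absorbed into $C$, giving $|v-\Pr v|_{i,\Ik}\le Ch^{m+1-i}|v|_{m+1,\Ik}$ for all $v\in\H_{\alpha,r}^{m+1}(\Ik)$ and all $i=0,1,\dots,m$, with $C$ independent of both $\alpha$ and $h$ as claimed. Regarding obstacles: the substantive work has already been done upstream, namely the $\calpha$-independent inverse inequality (\autoref{lemma:inverse_inequality}) and the immersed Bramble–Hilbert machinery behind \autoref{thm:error_of_general_projection}; given those, this theorem is essentially bookkeeping. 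The one point that deserves care is the logical bridge between the two notions of boundedness — the definition only supplies control of $\norm{\cP v}_{0,\cI}$ by $\norm{v}_{j,\cI}$, whereas the scaling estimate needs control of $\norm{\cP v}_{i,\cI}$ — and it is exactly the $\calpha$-uniformity of the inverse inequality that closes this gap, so one must be sure the constant it produces carries no hidden dependence on $\calpha$.
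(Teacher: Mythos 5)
Your proof is correct and takes essentially the same route as the paper: the paper simply cites \autoref{lemma:uniform_bound_for_cP} (whose short inverse-inequality argument you reproduce inline) to bound $\norm{\cP}_{i,j,\cI}$ uniformly in $\calpha$, and then applies \autoref{thm:error_of_general_projection} exactly as you do. The only cosmetic difference is that the paper upgrades to $j=m+1$ via $\norm{v}_{j,\cI}\le\norm{v}_{m+1,\cI}$ while you retain the original $j$ from \autoref{def:uniform_boundedness}; both are valid.
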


\begin{proof}
By \autoref{lemma:uniform_bound_for_cP}, we know that
$\{\Pr\}_{\alpha\in\overset{\circ}{I}_{k_0}}$ satisfies \eqref{eqn:projection_error_reference_1} for
$i = 0, 1, 2, \dots, m$ with $j = m+1$. Consequently, we have
\begin{align*}
\norm{\cP}_{i, m+1, \cI}=\sup\left\{ \norm{\cP v }_{i,\cI}\mid v\in   \H^{m+1}_{\calpha,r}(\cI) \text{ and } \norm{v}_{m+1,\cI}=1\right\} \leq C.
\end{align*}
Then, applying these to \eqref{eqn:approximation_capabilities_general} established in
\autoref{thm:error_of_general_projection} yields \eqref{eqn:general_theorem}.

\end{proof}

The simplest example of a uniformly bounded collection of LIFE projections is the $L^2$ projection $\check{\text{Pr}}_{\calpha,r}^m:\H_{\calpha,r}^{m+1}(\cI)\to \cV$ defined by
$$\left(q,\check{\text{Pr}}_{\calpha,r}^m  v \right)_{\cI}=\left(q, v \right)_{\cI},\qquad \forall q\in \cV.$$
Choosing $q=\check{\text{Pr}}_{\calpha,r}^m  v$, we get $\norm{\check{\text{Pr}}_{\calpha,r}^m  v}_{0,\cI}=\norm{v}_{0,\cI}\le \norm{v}_{m+1,\cI}$. By \autoref{lemma:uniform_bound_for_cP}, $\{\check{\text{Pr}}_{\calpha,r}^m\}_{\alpha\in\overset{\circ}{I}_{k_0}}$
is an uniformly bounded collection of projections. Consequently, by  \autoref{thm:general_theorem_LIFE}, we can obtain the following optimal approximation capability of the LIFE space.

\begin{corollary}
\label{coro:L2_projection_rate}
Let ${\tilde m} \geq m\ge 0, \{r_k\}_{k=0}^{\tilde m}\subset\mathbb{R}_+$, $\calpha\in(0,1)$. Then, there exists a constant $C>0$ independent of $\alpha$ and $h$ such that
\begin{equation}
\min_{q\in \V^{m}_{\alpha,r}(\Ik)}|q- v|_{0,\Ik}
\le C h^{m+1}|v|_{m+1,\Ik},\quad \forall v\in \H_{\alpha,r}^{m+1}(\Ik). \label{eqn:L2_projection}
\end{equation}
\end{corollary}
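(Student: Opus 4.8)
The plan is to apply the uniform-boundedness machinery assembled above to one explicit member of the family --- the $L^2$-orthogonal projection onto the RIFE space introduced immediately before the corollary --- and then read off the case $i=0$. Thus essentially all the work is already done, and the corollary is a direct specialization.

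First I would record that the reference $L^2$ projection $\check{\text{Pr}}_{\calpha,r}^m$ is well-defined: since $\cV$ has dimension $m+1$ by \autoref{lem:Dimension and uniqueness of extensions} and $(\cdot,\cdot)_{\cI}$ restricts to a positive-definite form on it, the defining relations have a unique solution for each $v\in\H^{m+1}_{\calpha,r}(\cI)$. It is moreover a projection in the sense of \eqref{eqn:RIFE_preserving}: for $\cp\in\cV$ the function $\cp$ itself satisfies the defining relations $(q,\cp)_{\cI}=(q,\cp)_{\cI}$ for all $q\in\cV$, so by uniqueness $\check{\text{Pr}}_{\calpha,r}^m\cp=\cp$. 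Next I would verify uniform boundedness with $j=0$: testing the defining identity with $q=\check{\text{Pr}}_{\calpha,r}^m v$ and applying Cauchy--Schwarz gives $\norm{\check{\text{Pr}}_{\calpha,r}^m v}_{0,\cI}^2=(\check{\text{Pr}}_{\calpha,r}^m v,v)_{\cI}\le\norm{\check{\text{Pr}}_{\calpha,r}^m v}_{0,\cI}\norm{v}_{0,\cI}$, hence $\norm{\check{\text{Pr}}_{\calpha,r}^m v}_{0,\cI}\le\norm{v}_{0,\cI}\le\norm{v}_{m+1,\cI}$. The constant here is $1$, manifestly independent of $\calpha$, so $\{\check{\text{Pr}}_{\calpha,r}^m\}_{0<\calpha<1}$ satisfies \eqref{eqn:def_uniformly_bounded} and is a uniformly bounded collection of RIFE projections in the sense of \autoref{def:uniform_boundedness}.

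I would then feed this into the established chain: \autoref{lemma:uniform_bound_for_cP} turns the $L^2$ bound into the full-norm bound $\norm{\cP v}_{i,\cI}\le C\norm{v}_{m+1,\cI}$ for all $0\le i\le m+1$, and \autoref{thm:general_theorem_LIFE} transfers the estimate to the physical interface element, yielding $\abs{v-\Pr v}_{i,\Ik}\le Ch^{m+1-i}\abs{v}_{m+1,\Ik}$ for $i=0,1,\dots,m$ with $C$ independent of $\alpha$ and $h$, where $\Pr$ is the LIFE $L^2$ projection associated with $\check{\text{Pr}}_{\calpha,r}^m$ through \eqref{eqn:Projection_LIFE_RIFE}. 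Specializing to $i=0$ and observing that $\Pr v\in\V^m_{\alpha,r}(\Ik)$ is an admissible competitor in the minimization gives
$$\min_{q\in\V^m_{\alpha,r}(\Ik)}\abs{q-v}_{0,\Ik}\le\abs{v-\Pr v}_{0,\Ik}\le Ch^{m+1}\abs{v}_{m+1,\Ik},$$
which is exactly \eqref{eqn:L2_projection}.

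I do not expect a genuine obstacle here: all of the analytical difficulty --- in particular the $\calpha$-independence of the constants --- has already been absorbed into the inverse inequality of \autoref{lemma:inverse_inequality} and the immersed Bramble--Hilbert estimate of \autoref{thm:immersed_bramble_hilbert}. The only point that warrants a moment's care is checking that the $L^2$ projection genuinely fixes every element of $\cV$, so that \eqref{eqn:RIFE_preserving} holds, rather than merely being bounded; this is precisely what allows \autoref{lem:projection_error_reference} to split the error through $\cpi v$. In effect this corollary serves as the simplest end-to-end confirmation that the immersed scaling-argument framework produces optimal, interface-uniform approximation rates.
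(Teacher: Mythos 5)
Your proposal is correct and follows essentially the same route as the paper: testing the defining relation of the $L^2$ projection with $q=\check{\text{Pr}}_{\calpha,r}^m v$ to get a $\calpha$-independent bound with $j=0$, then invoking \autoref{lemma:uniform_bound_for_cP} and \autoref{thm:general_theorem_LIFE} and taking $i=0$. Your added verifications (well-posedness of the projection and the fixing property \eqref{eqn:RIFE_preserving}) are sound, and your Cauchy--Schwarz step correctly yields $\norm{\check{\text{Pr}}_{\calpha,r}^m v}_{0,\cI}\le\norm{v}_{0,\cI}$ where the paper's text states an equality that is evidently a slip.
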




\section{Analysis of an IFE-DG method for a class of hyperbolic systems with discontinuous coefficients}\label{sec:one_D_class}

In this section, we employ the results of \autoref{sec:properties_of_v} and \autoref{sec:Bramble_Hilbert} to analyse an IFE-DG method for the acoustic interface problem \eqref{eqn:acoustic_acoustic_problem}. To our knowledge, the analysis of such method for hyperbolic systems has so far not been considered in the literature unlike the IFE methods for elliptic problems. The main challenge that time-dependent problems present is the plethora of possible jump coefficients. For instance, the jump coefficients $r^{p}_{k},r^{v}_k$ described in \eqref{eqn:extended_conditions_acoustic} for the acoustic interface problem are given by:
\begin{equation}
    r^p_{2k}=r^v_{2k}=\left(\frac{c_-}{c_+}\right)^{2k},\qquad r^{p}_{2k+1}= \frac{\rho_+}{\rho_-}\left(\frac{c_-}{c_+}\right)^{2k},\qquad r^{v}_{2k+1}= \frac{\rho_-}{\rho_+}\left(\frac{c_-}{c_+}\right)^{2k+2},\quad k=0,1,\dots
    \label{eqn:jump_coefficients_expanded}
\end{equation}
The nature of the jump coefficients in \eqref{eqn:jump_coefficients_expanded} makes the study of this particular IFE space extremely tedious as observed in \cite{moonImmersedDiscontinuousGalerkin2016}. Fortunately, the theory developed in \autoref{sec:properties_of_v} and \autoref{sec:Bramble_Hilbert} is general and applies to any choice of positive jump coefficients.

\subsection{Problem statement and preliminary results}

Let $I=(a,b)$ be a bounded interval containing $\alpha$, and let $\rho_{\pm},c_{\pm}$ be positive constants describing the density and the sound speed in $I^{\pm}$, respectively. Now, we consider the acoustic interface problem on $I$
\begin{subequations}\label{eqn:acoustic_matrix_form}
\begin{equation}\u_t(x,t)+A(x) \u_{x}(x,t)=0, \qquad x\in I\backslash \{\alpha\},\qquad t>0,
\label{eqn:acoustic_matrix_form:eqn}
\end{equation}
where $\u=(p,u)^T$ is the pressure-velocity couple and  \begin{equation} A_{\mid I^{\pm}}=A_{\pm}=\begin{pmatrix}
0& \rho_\pm c_{\pm}^2\\ \rho_{\pm}^{-1} & 0
\end{pmatrix} .\label{eqn:acoustic_matrix_form:A}\end{equation}
The matrices $A_\pm$ can be decomposed as $A_{\pm}=P_{\pm}\Lambda_{\pm} P_{\pm}^{-1}$, where $\Lambda_{\pm}=\diag(-c_\pm,c_\pm)$. Using this eigen-decomposition, we define $A_{\pm}^+=P_{\pm}\diag(0,c_{\pm}) P_{\pm}^{-1}$,  $A_{\pm}^-=P_{\pm}\diag(-c_{\pm},0) P_{\pm}^{-1}$, and $|A_{\pm}|=P_{\pm}\diag(c_{\pm},c_{\pm}) P_{\pm}^{-1}$ to be the positive part,the negative part, and the absolute value of $A_{\pm}$, respectively.
The acoustic interface problem that we are considering here is subject to the following homogeneous inflow boundary conditions
\begin{equation}
A_-^+\u(a,t)=A_+^-\u(b,t)=0, \qquad t\ge 0,
\label{eqn:acoustic_matrix_form:BC}
\end{equation}
initial conditions
\begin{equation}
\u(x,0)=\u_0(x), x\in I,
\label{eqn:acoustic_matrix_form:initial}
\end{equation}
and interface condition
\begin{equation}
\u(\alpha^-,t)=\u(\alpha^+,t),\qquad   t\ge 0.
\label{eqn:acoustic_matrix_form:IC}
\end{equation}
\end{subequations}
In the remainder of this section, let $S_{\pm}=\diag(\rho_{\pm}^{-1}c_{\pm}^{-2},\rho_{\pm})$ and $S(x)=S_{\pm}$ if $x\in I^{\pm}$, then
\begin{equation}
    S_{\pm}A_{\pm} =  \begin{pmatrix}
        0&1\\
        1&0
    \end{pmatrix}=\tilde{A}.
    \label{eqn:definition_of_tilde_A}
\end{equation}
Now, we can multiply \eqref{eqn:acoustic_matrix_form:eqn} by $S$ and write the acoustic interface problem as
\begin{equation}
    S(x)\u_t(x,t)+\tilde{A}\u_{x}(x,t)=0, \qquad x\in I\backslash \{\alpha\},\qquad t>0.
\label{eqn:acoustic_symmetric_frederichs}
\end{equation}

Lombard and Piraux \cite{lombardModelisationNumeriquePropagation2002}, and Moon \cite{moonImmersedDiscontinuousGalerkin2016} have shown that by successively differentiating $\bb{\u(\cdot,t)}_{\alpha}=0$, where $\bb{\cdot}$ is the jump, we obtain
$$\bb{A(\cdot)^k\u(\cdot,t)}_{\alpha}= 0 \Longleftrightarrow \frac{\partial^k}{\partial x^k}\u(\alpha^+,t)=R_k \frac{\partial^k}{\partial x^k}\u(\alpha^-,t),\quad R_k=A_+^{-k}A_-^k, \ k=0,1,\dots,m.$$
Since $R_k$ is diagonal (see part (a) of \autoref{lem:properties_of_matrices}), the condition $\u^{(k)}(\alpha^+,t)=R_k \u^{(k)}(\alpha^-,t)$ is equivalent to

\begin{equation}\frac{\partial ^k}{\partial x^k}p(\alpha^+,t)=r^{p}_k\frac{ \partial^k}{\partial x^k} p(\alpha^-,t),\qquad \frac{\partial ^k}{\partial x^k}u(\alpha^+,t)=r^{u}_k\frac{ \partial^k}{\partial x^k} u(\alpha^-,t),\label{eqn:def_of_R}\end{equation}
where $r^p_k$ and $r^u_k$ are defined in \eqref{eqn:jump_coefficients_expanded}. These decoupled interface conditions make the results obtained previously about the approximation capabilities of the LIFE space directly applicable to vector functions in the product spaces
$$ \HH^{m+1}_{\alpha,\r}(\Ik)
=\H^{m+1}_{\alpha,r^{p}} (\Ik)\times  \H^{m+1}_{\alpha,r^{u}} (\Ik)
,\quad
\VV^{m}_{\alpha,\r}(\Ik)=
 \V^m_{\alpha,r^{p}} (\Ik)\times \V^m_{\alpha,r^{u}} (\Ik),
\quad \WW^{m}_{\alpha,\r}(\T_h)=
W^{m}_{\alpha,r^{p}}(\T_h)\times W^{m}_{\alpha,r^{u}}(\T_h) ,
$$
where $\r=(r^p,r^u)$. Now, we define the following bilinear forms

\begin{subequations}
     \label{eqn:definition_of_M_and_B}
\begin{equation}
    M(\w,\v)=\sum_{k=1}^N \left(S\v,\w\right)_{I_k}
     \label{eqn:definition_of_M}
    \end{equation}
    \begin{equation}
    B(\w,\v)= \sum_{k=1}^N\left(\v',\tA\w\right)_{I_k}+\sum_{k=0}^N \bb{\v}_{x_k}^T S(x_{k})\hat{\w}(x_{k}),
    \label{eqn:definition_of_B}
\end{equation}

\end{subequations}

where the numerical flux  $\hat{\w}(x_k)=A(x_k)^+\w(x_k^-)+A(x_k)^-\w(x_k^+)$ at the interior nodes. At the boundary, we have $\hat{\w}(x_N)= A(x_N)^+\w(x_N^-)$, $\hat{\w}(x_0)= A(x_0)^-\w(x_0^+)$, $\bb{\w}_{x_N}=\w(x_N^-)$ and  $\bb{\w}_{x_0}=-\w(x_0^+)$.

Now we define the immersed DG formulation as: Find $\u_h\in C^1\left([0,T], \WW^{m}_{\alpha,\r}(\T_h)\right)$ such that
\begin{subequations}\label{eqn:general_IDG_form_compact}
\begin{align}
(\u_h(\cdot,0),\v_h)_{I}=(\u_0,\v_h)_{I},\qquad &\forall\v_h \in \WW^{m}_{\alpha,\r}(\T_h),\label{eqn:general_IDG_form_compact:initial_projection}\\
M(\u_{h,t}(\cdot,t),\v_h)=
B(\u_h(\cdot,t),\v_h),\qquad &\forall \v_h \in \WW^{m}_{\alpha,\r}(\T_h) ,\label{eqn:general_IDG_form_compact:Weak_form}
\end{align}
\end{subequations}

We note that the discrete weak form \eqref{eqn:general_IDG_form_compact} and the discrete space $\WW^{m}_{\alpha,\r}(\T_h)$ are identical to the ones described in the IDPGFE formulation in \cite{moonImmersedDiscontinuousGalerkin2016}.

Next, we will go through some basic properties of the matrices $S_{\pm}$ and $A_{\pm}$, these properties will be used later in the proof the $L^2$ stability in \autoref{eqn:continuous_stability}, in the analysis of the immersed Radau projection and in the convergence estimate.

\begin{lemma}
\label{lem:properties_of_matrices}
Let $A_{\pm}$ be the matrices defined in \eqref{eqn:acoustic_matrix_form:A} and let $S_{\pm}= \diag(\rho_{\pm}^{-1}c_{\pm}^{-1},\rho_{\pm})$, then

\begin{enumerate}[label=(\alph*)]
    \item For any integer $k\ge 0$, the matrix $A_{+}^{-k}A_{-}^k$ is diagonal with positive entries.
    \item Let $s\in \{+,-\}$, then there is an invertible matrix $P_s$ such that $A_s=P_s\diag(-c_s,c_s) P_{s}^{-1}$ and $S_s=P_{s}^{-T}P_s^{-1}$.
    \item Let $s\in \{+,-\}$, then the matrices $S_sA_s^+,\ S_sA_s^-$ and $S_s|A_s|$ are symmetric. Furthermore, $S_sA^+_s$ is positive semi-definite, $S_sA^-_s$ is negative semi-definite and $S_s|A_s|$ is positive definite.
    \item Let $s,\tilde{s}\in \{+,-\}$, and let $\w\in \mathbb{R}^2$, then

  \begin{equation} \left(\norm{A^{\tilde{s}}_s\w}^2+\left|\w^T S_sA^{\tilde{s}'}_s\w\right|=0\right)\Longrightarrow \w=0.
    \label{eqn:SA_inequality}
  \end{equation}
  where $\tilde{s}'$ is dual of $\tilde{s}$ defined at the beginning of \autoref{sec:properties_of_v}, and $\norm{\cdot}$ is Euclidean norm.
  \item Let $s\in\{+,-\}$. Then, there is a constant $C(\rho_{s},c_{s})>0$ such that
  \begin{equation}
    \w^TS_sA_s^+ \w-\w^T S_s A_s^-\w \ge C(\rho_s,c_s)\norm{\w}^2,\qquad \forall \w\in \mathbb{R}^2.
      \label{eqn:SA_vector_norm}
  \end{equation}
    \end{enumerate}

\end{lemma}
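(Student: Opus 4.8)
The engine of the entire lemma is the symmetrizer structure made explicit in part (b), so the plan is to prove (b) first and derive (c), (d), (e) from it, handling (a) by a separate elementary computation. For part (a), the key observation is that $A_\pm^2 = c_\pm^2\,\Id$ by direct multiplication, so that $A_+^{-1} = c_+^{-2}A_+$ and hence $A_+^{-k} = c_+^{-2k}A_+^k$. This reduces $A_+^{-k}A_-^k$ to a parity argument: for even $k$ it collapses to $(c_-/c_+)^{k}\,\Id$, and for odd $k$ it is a positive scalar multiple of $A_+A_- = \diag(\rho_+c_+^2/\rho_-,\ \rho_-c_-^2/\rho_+)$. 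In both cases the product is diagonal with positive entries, and the diagonal entries reproduce exactly the coefficients $r^p_k, r^u_k$ listed in \eqref{eqn:jump_coefficients_expanded}.

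For part (b) I would exhibit $P_s$ explicitly. The eigenvalues of $A_s$ are $\mp c_s$ with eigenvectors proportional to $(\mp\rho_s c_s,\,1)^T$, which already gives the decomposition $A_s = P_s\diag(-c_s,c_s)P_s^{-1}$ for any scaling of the columns. The delicate point, and the main obstacle, is selecting the normalization so that the second identity $S_s = P_s^{-T}P_s^{-1}$ holds as well; a generic scaling of the eigenvectors fails this. Scaling both columns by $(2\rho_s)^{-1/2}$, i.e. taking $P_s = (2\rho_s)^{-1/2}\begin{pmatrix} -\rho_s c_s & \rho_s c_s \\ 1 & 1 \end{pmatrix}$, one computes $P_sP_s^T = \diag(\rho_s c_s^2,\ \rho_s^{-1})$, whose inverse is $S_s$, establishing the claim. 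I note that this forces $S_s = \diag(\rho_s^{-1}c_s^{-2},\rho_s)$, the definition consistent with the identity $\tA = S_s A_s$ used in \eqref{eqn:definition_of_tilde_A}.

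With (b) available, parts (c), (d), (e) become congruence arguments in the eigencoordinates $y = P_s^{-1}\w$. Writing $A_s^+ = P_s\diag(0,c_s)P_s^{-1}$, $A_s^- = P_s\diag(-c_s,0)P_s^{-1}$ and $|A_s| = P_s\diag(c_s,c_s)P_s^{-1}$, each weighted product telescopes to a congruence $S_sA_s^{\bullet} = P_s^{-T}D\,P_s^{-1}$ with $D$ diagonal; symmetry is then automatic, and for $\w\in\R^2$ the quadratic form equals $y^T D y$, whose sign is read directly off the diagonal of $D$ (nonnegative, nonpositive, and strictly positive respectively), yielding (c). Moreover $S_s|A_s| = c_sS_s$ is positive definite.

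For part (d), $A_s^{\tilde{s}}\w = 0$ forces one eigencoordinate of $\w$ to vanish, while $\w^T S_s A_s^{\tilde{s}'}\w = \pm c_s y_{i'}^2 = 0$ forces the other, so $\w = P_s y = 0$; the absolute value in the hypothesis is exactly what renders both summands nonnegative, so that a vanishing sum forces each term to vanish. Finally, for part (e), observing $A_s^+ - A_s^- = |A_s|$ turns the left-hand side into $\w^T S_s|A_s|\w = c_s\,\w^T S_s\w \ge c_s\lambda_{\min}(S_s)\norm{\w}^2$, which gives the explicit constant $C(\rho_s,c_s) = c_s\min(\rho_s^{-1}c_s^{-2},\,\rho_s) > 0$.
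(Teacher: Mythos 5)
Your proof is correct and follows essentially the same route as the paper's: the identical normalized eigenvector matrix $P_s=\frac{1}{\sqrt{2\rho_s}}\begin{pmatrix}-\rho_s c_s & \rho_s c_s\\ 1 & 1\end{pmatrix}$, the same congruences $S_sA_s^{\tilde{s}}=P_s^{-T}\diag(\cdot)P_s^{-1}$ driving (c) and (d), the same parity reduction via $A_\pm^2=c_\pm^2\Id$ for (a), and for (e) your identity $A_s^+-A_s^-=|A_s|$ with $S_s|A_s|=c_sS_s$ is only a marginally slicker path to the paper's constant $\min\left(\rho_s^{-1}c_s^{-1},\rho_s c_s\right)$. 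You are also right to flag that the $S_\pm$ in the lemma statement is a typo for $\diag(\rho_\pm^{-1}c_\pm^{-2},\rho_\pm)$, which is the definition the paper's own computations and \eqref{eqn:definition_of_tilde_A} actually use.
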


\begin{proof}
    \begin{enumerate}[label=(\alph*)]
     \item We have $A_{\pm}^2=c_{\pm}^2 \Id_2$, where $\Id_2$ is the the $2\times 2$ identity matrix. Therefore,
     \begin{equation}
         A_{\pm}^{2k}=c_{\pm}^{2k} \Id_2,\qquad  A_{\pm}^{2k+1}=c_{\pm}^{2k} A_{\pm},\qquad k=0,1,\dots \label{eqn:powers_of_A}
     \end{equation}
     Using \eqref{eqn:powers_of_A}, we immediately obtain  $A_+^{-2k}A_{-}^{2k}=\left(\frac{c_-}{c_+}\right)^{2k} \Id_2$ and $A_+^{-2k-1}A_{-}^{2k+1}=\left(\frac{c_-}{c_+}\right)^{2k}A_+^{-1}A_-$. Finally, by direct computation, we have $A_+^{-1}A_-=\diag\left(\frac{\rho_+}{\rho_-},\frac{\rho_-c_-^2}{\rho_+c_+^2}\right)$. Hence, $A_{+}^{-k}A_{-}^k= \diag(r^p_k,r^u_k)$, where $r^p_k$ and $r^u_k$are defined in \eqref{eqn:jump_coefficients_expanded}.

     \item Let \begin{equation}P_s=\frac{1}{\sqrt{2\rho_s}} \begin{pmatrix}
     -c_s\rho_s & c_s \rho_s\\ 1& 1
     \end{pmatrix},\label{eqn:definition_of_P}\end{equation}
     then, by a simple computation, we can show that $S_s=P_s^{-T}P_s^{-1}$ and $A_s=P_s \diag(-c_s,c_s)P_s^{-1}$.
     \item We have $S_sA_s^{+}=P_s^{-T}\diag(0,c_s)P_s^{-1}$, where $P_s$ is defined in \eqref{eqn:definition_of_P}. Therefore, $S_sA_s^{+}$ is a symmetric semi-positive definite matrix. The other two claims can be proven similarly.
     \item We will only consider the case $\tilde{s}=+$ here, the other case can be proven similarly. Consider a vector $\w\in \mathbb{R}^2$ that satisfies
     \begin{equation}\norm{A^{+}_s\w}^2+\left|\w^T S_sA^{-}_s\w\right|=0.
     \label{eqn:pseudo_norm_equal_to_zero}
     \end{equation}
     Now, let $\tilde{\w}=P_s\w$ where $P_s$ is defined in \eqref{eqn:definition_of_P}, then \eqref{eqn:pseudo_norm_equal_to_zero} can be written as
     $$ \norm{P_s\diag(0,c_s)\tilde{\w}}^2 + \norm{\diag(-c_s,0)\tilde{\w}}^2=0, $$
     Since both norms are non-negative, we have
   $\diag(-c_s,0)\tilde{\w}=0$  and $P_s\diag(0,c_s)\tilde{\w}=0$.  $P_s$ being invertible, we get $\tilde{\w}=0$. Consequently, $\w=P_s^{-1}\tilde{\w}=0$.

   \item We have by direct computation $$\w^TS_s(A_s^+-A_s^-)\w  = \w^T\begin{pmatrix}
       \rho_s^{-1}c_s^{-1}& 0\\
       0 &\rho_s c_s
   \end{pmatrix}\w\ge \min(\rho_sc_s,\rho_s^{-1}c_s^{-1})\norm{\w}. $$
    \end{enumerate}

\end{proof}

\begin{lemma}
\label{eqn:continuous_stability}
Let $\u$ be a solution to \eqref{eqn:acoustic_matrix_form}, and let $\epsilon(t)= \frac{1}{2}\left(\u(\cdot,t),S(\cdot)\u(\cdot,t)\right)_{0,I}^2 $, then
$$ \epsilon'(t) \le 0,\qquad t\ge  0.$$
\end{lemma}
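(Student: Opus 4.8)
The plan is to run the standard energy (Friedrichs) argument for the symmetrized system \eqref{eqn:acoustic_symmetric_frederichs}, exploiting that $\tA$ is a single constant symmetric matrix while $S$ is symmetric and positive definite. Assuming enough regularity of $\u$ to differentiate under the integral and to apply the fundamental theorem of calculus on each subinterval, I would first differentiate the energy in time. Writing $\epsilon(t)=\tfrac12\int_I \u^T S\u\,dx$ and using the symmetry of $S$,
$$\epsilon'(t)=\int_I \u_t^T S\u\,dx=\int_I (S\u_t)^T\u\,dx.$$
Substituting $S\u_t=-\tA\u_x$ from \eqref{eqn:acoustic_symmetric_frederichs} and using that $\tA$ is symmetric gives $\epsilon'(t)=-\int_I \u_x^T\tA\u\,dx$. (If the squared reading of the definition is intended, the chain rule shows $\epsilon'$ has the same sign as $\tfrac{d}{dt}(\u,S\u)_{0,I}$, so the computation below is unchanged.)

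Second, since $\tA$ is constant and symmetric, $\u_x^T\tA\u=\tfrac12\frac{d}{dx}(\u^T\tA\u)$, so the integrand is an exact derivative. I would split $\int_I=\int_{I^-}+\int_{I^+}$ and apply the fundamental theorem of calculus on each piece, producing boundary contributions at $a,b$ and interface contributions at $\alpha^\mp$:
$$\epsilon'(t)=-\tfrac12\Big[(\u^T\tA\u)(b)-(\u^T\tA\u)(a)\Big]-\tfrac12\Big[(\u^T\tA\u)(\alpha^-)-(\u^T\tA\u)(\alpha^+)\Big].$$
The interface bracket vanishes: by \eqref{eqn:acoustic_matrix_form:IC} the solution is continuous at $\alpha$, and $\tA$ is the same constant matrix on both sides of $\alpha$ (it does not jump, unlike $A$), so $(\u^T\tA\u)(\alpha^-)=(\u^T\tA\u)(\alpha^+)$. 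This cancellation is the crux and is exactly why the symmetrized form \eqref{eqn:acoustic_symmetric_frederichs} is used in place of \eqref{eqn:acoustic_matrix_form:eqn}.

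Third, I would treat the remaining boundary terms through $\tA=S_\pm A_\pm$ from \eqref{eqn:definition_of_tilde_A} and the splitting $A_\pm=A_\pm^++A_\pm^-$. At $x=b$, $(\u^T\tA\u)(b)=\u^T S_+A_+^+\u+\u^T S_+A_+^-\u$; the inflow condition $A_+^-\u(b,t)=0$ from \eqref{eqn:acoustic_matrix_form:BC} kills the second term, and part (c) of \autoref{lem:properties_of_matrices} gives $\u^T S_+A_+^+\u\ge 0$. Symmetrically at $x=a$, the condition $A_-^+\u(a,t)=0$ leaves $(\u^T\tA\u)(a)=\u^T S_-A_-^-\u\le 0$ by the negative semi-definiteness of $S_-A_-^-$. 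Hence $(\u^T\tA\u)(b)-(\u^T\tA\u)(a)\ge 0$, and therefore $\epsilon'(t)\le 0$.

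The main obstacle is the interface term: its cancellation hinges on combining the continuity condition \eqref{eqn:acoustic_matrix_form:IC} with the fact that the symmetrizer $S$ is chosen so that $S_\pm A_\pm$ collapse to the single constant matrix $\tA$. Everything else is a routine dissipative-boundary computation once the matrix-algebra facts of \autoref{lem:properties_of_matrices}(c) and the boundary-data decomposition are in hand.
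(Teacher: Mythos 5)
Your proof is correct and takes essentially the same approach as the paper's: differentiate the energy of the symmetrized system \eqref{eqn:acoustic_symmetric_frederichs}, cancel the interface contribution using the continuity condition \eqref{eqn:acoustic_matrix_form:IC} together with the fact that $\tilde{A}$ is the same constant symmetric matrix on both sides of $\alpha$, and sign the boundary terms via \eqref{eqn:acoustic_matrix_form:BC} and the semi-definiteness facts of \autoref{lem:properties_of_matrices}(c). Your remark that the square in the stated definition of $\epsilon(t)$ does not affect the sign computation is also consistent with the paper, whose own proof silently treats $\epsilon(t)=\frac{1}{2}\int_I \u^T S\u\,dx$.
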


\begin{proof}
    By multiplying \eqref{eqn:acoustic_symmetric_frederichs} by $\u^T$ and integrating on $I^\pm$, we obtain
    $$\int_I \u(x,t)^TS\u_t(x,t) +\u(x,t)^T\tilde{A}\u_x(x,t)\ dx=0.$$
    The matrices $S$ and $\tilde{A}$ are symmetric. Therefore, we rewrite the previous equation as
    $$\epsilon'(t)+\sum_{s=+,-}\int_{I^s}\frac{\partial}{\partial x}\u(x,t)^T\tilde{A}\u(x,t)) dx=0. $$
Since $\u$ is continuous at $\alpha$ (from \eqref{eqn:acoustic_matrix_form:IC}), we have
\begin{equation}\epsilon'(t)+\u(b,t)^T\tilde{A}\u(b,t)-
\u(a,t)^T\tilde{A}\u(a,t)=0.\label{eqn:stability_with_boundary}\end{equation}
Now, we can rewrite the term $\u(b,t)^T\tilde{A}\u(b,t)$ as
$$\u(b,t)^T\tilde{A}\u(b,t)=\u(b,t)^TS_+A_+\u(b,t)=\u(b,t)^TS_+A_+^+\u(b,t)+\u(b,t)^TS_+A^-_+\u(b,t)=\u(b,t)^TS_+A_+^+\u(b,t),$$
where the last equality follows from the boundary condition \eqref{eqn:acoustic_matrix_form:BC}. Since $SA^+$ is symmetric semi-positive definite (see part (c) of \autoref{lem:properties_of_matrices}), we conclude that $\u(b,t)^T\tilde{A}\u(b,t)\ge 0$. Similarly, we have  $\u(a,t)^T\tilde{A}\u(a,t)\le 0$. Therefore,
$$\epsilon'(t)\le 0.$$
\end{proof}

The previous lemma shows that $\epsilon(t)$, interpreted as the energy of the system, is decreasing. This is to be expected since the boundary conditions in \eqref{eqn:acoustic_matrix_form:BC} are dissipative (see \cite{benzoni-gavageMultidimensionalHyperbolicPartial2006}). Furthermore, if we let $\epsilon_h(t)=\frac{1}{2}M(\u_h(\cdot,t),\u_h(\cdot,t))$ be the discrete energy, then
\begin{equation}\epsilon_h'(t)=B(\u_h,\u_h)= \frac{-1}{2} \sum_{k=0}^N \bb{\u_h}^T_{x_k}S(x_k)|A(x_k)|\bb{\u_h}_{x_k}\le 0.
\label{eqn:negative_definite_form}
\end{equation}
The proof of \eqref{eqn:negative_definite_form} follows the same steps as the scalar case described in \cite{cockburnIntroductionDiscontinuousGalerkin1998}.

\subsection{The immersed Radau projection and the convergence analysis}

We denote by $\mathcal{R}\u\in \WW^m_{\alpha,\r}(\T_h)$ the global Gauss-Radau projection defined as
\begin{equation}B(\u - \mathcal{R}\u,\v_h) =0,\qquad  \forall \v_h\in  \WW^m_{\alpha,\r}(\T_h).\label{eqn:Global_Radau_projection}\end{equation}

Although $\mathcal{R}\u$ is a global projection, it can be constructed on each element independently, the construction of $(\mathcal{R}\u)_{\mid I_k}$ where $k\ne k_0$ can be found in \cite{adjeridAsymptoticallyExactPosteriori2010,yangAnalysisOptimalSuperconvergence2012} for the scalar case and can be generalized easily to systems. On the interface element, we define the local immersed Radau projection (\textit{IRP}) operator $\Pi^m_{\alpha,\r}: \HH^{m+1}_{\alpha,\r}(\Ik)\to  \VV^{m}_{\alpha,\r}(\Ik)$ using \eqref{eqn:phys_ref_diagram} as
$$\Pi^m_{\alpha,\r}=\M^{-1}\circ \cPi \circ \M,$$
 where $\cPi : \HH^{m+1}_{\calpha,\r}(\cI)\to  \cVV$ is called the reference IRP operator and it is defined as the solution to the following system:
 \begin{subequations}
     \label{eqn:reference_formulation_of_Radau}
\begin{align}[left ={\empheqlbrace}]
    A^{-}_- \cPi\cuu(0) &=  A^{-}_- \cuu(0),    \label{eqn:reference_formulation_of_Radau:a}\\
    A^{+}_+ \cPi \cuu(1) &=  A^{+}_+ \cuu(1),    \label{eqn:reference_formulation_of_Radau:b}\\
    \left( \tilde A\v',\cPi\cuu\right)_{\cI}&= \left(\tilde A\v', \cuu\right)_{\cI},\qquad \forall \v\in \cVV.    \label{eqn:reference_formulation_of_Radau:c}
\end{align}
 \end{subequations}

 Next,  we will go through some basic properties of the IRP to prove that the IRP is well defined and is uniformly bounded on the  RIFE space $\cVV$. From there, we can show that the IRP error on the  LIFE space $\VV^{m}_{\alpha,\r}(\Ik)$  decays at an optimal rate of $O(h^{m+1})$ under mesh refinement.

\begin{lemma} \label{lem:commutative_diagram}
 Let $A$ be the matrix function defined in \eqref{eqn:acoustic_matrix_form:A} and let $\pp\in \cVV$, then $A\pp' \in \mathbb{V}^{m-1}_{\calpha,\r}(\cI)$. Furthermore the map
 \begin{align*}
     G:\cVV&\to \mathbb{V}^{m-1}_{\calpha,\r}(\cI)\\
     \pp &\mapsto  A\pp'
 \end{align*}
 is surjective.
\end{lemma}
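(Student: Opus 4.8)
The plan is to establish the two assertions in turn: that $G$ indeed takes values in $\VV^{m-1}_{\calpha,\r}(\cI)$, and then that it is surjective by a rank--nullity count. Write $\pp=(p,u)^T$ with $p\in\V^m_{\calpha,r^p}(\cI)$ and $u\in\V^m_{\calpha,r^u}(\cI)$, and set $\q=A\pp'$. Since $A$ equals the constant matrix $A_s$ on each sub-interval $\cI^s$, the entries of $\q_{\mid\cI^s}=A_s\pp'_{\mid\cI^s}$ are polynomials of degree at most $m-1$, so the only thing to check for membership is the interface jump condition. By \eqref{eqn:def_of_R} and part (a) of \autoref{lem:properties_of_matrices}, belonging to $\cVV$ is equivalent to $\pp^{(k)}(\calpha^+)=R_k\pp^{(k)}(\calpha^-)$ for $0\le k\le m$ with $R_k=A_+^{-k}A_-^k=\diag(r^p_k,r^u_k)$, and likewise $\q\in\VV^{m-1}_{\calpha,\r}(\cI)$ is equivalent to $\q^{(k)}(\calpha^+)=R_k\q^{(k)}(\calpha^-)$ for $0\le k\le m-1$.

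The crux is verifying this last condition. Using $\q^{(k)}_{\mid\cI^s}=A_s\pp^{(k+1)}_{\mid\cI^s}$ and the jump condition for $\pp$ (valid since $k+1\le m$), I compute, for $0\le k\le m-1$,
$$
\q^{(k)}(\calpha^+)=A_+\pp^{(k+1)}(\calpha^+)=A_+R_{k+1}\pp^{(k+1)}(\calpha^-).
$$
The identity that makes everything work is the elementary $A_+R_{k+1}=A_+^{-k}A_-^{k+1}=R_kA_-$, which turns the right-hand side into $R_kA_-\pp^{(k+1)}(\calpha^-)=R_k\q^{(k)}(\calpha^-)$, as desired. I expect recognizing this matrix identity to be the main obstacle, and it is also the conceptual heart of the lemma: differentiation alone would send $\pp$ into a space with the \emph{shifted} jump sequence $\tau(\r)$, but left multiplication by $A$ exactly compensates for the shift and returns the values to the unshifted space $\VV^{m-1}_{\calpha,\r}(\cI)$. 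Working through the explicit coefficients \eqref{eqn:jump_coefficients_expanded} would yield the same conclusion but far less transparently.

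For surjectivity I would invoke rank--nullity. Each $A_\pm$ is invertible since $\det A_\pm=-c_\pm^2\ne 0$, so $G\pp=0$ forces $\pp'\equiv 0$; hence $\ker G$ consists of the piecewise-constant members of $\cVV$, which are precisely the pairs $(c_1\N^0_{\calpha,r^p},c_2\N^0_{\calpha,r^u})$, giving $\dim\ker G=2$. By \autoref{lem:Dimension and uniqueness of extensions}, $\dim\cVV=2(m+1)$ and $\dim\VV^{m-1}_{\calpha,\r}(\cI)=2m$, so
$$
\dim(\operatorname{range}G)=2(m+1)-2=2m=\dim\VV^{m-1}_{\calpha,\r}(\cI).
$$
Together with the inclusion $\operatorname{range}G\subseteq\VV^{m-1}_{\calpha,\r}(\cI)$ from the first part, the matching dimensions force $G$ to be onto. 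This final step is routine once the kernel is identified.
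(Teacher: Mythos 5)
Your proposal is correct and takes essentially the same approach as the paper: the membership claim rests on the identical computation (your identity $A_+R_{k+1}=R_kA_-$ is precisely the paper's chain $A_+A_+^{-k-1}A_-^{k+1}A_-^{-1}=A_+^{-k}A_-^{k}$, rearranged), and the surjectivity argument via rank--nullity with $\ker G=\VV^{0}_{\calpha,\r}(\cI)$ of dimension $2$ matches the paper's proof step for step.
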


\begin{proof}Let $\pp\in \cVV$ and let $\tilde{\pp}=A\pp'$, then for a fixed $k\in\{0,1,\dots,m-1\}$, we have
\begin{align}\tilde{\pp}^{(k)}(\calpha^+)&= A_+\pp^{(k+1)}(\calpha^+)\tag{Using  $\tilde{\pp}=A\pp'$ }  \\
&=A_+A_+^{-k-1}A_{-}^{k+1}\pp^{(k+1)}(\calpha^-)\tag{By construction of $ \cVV$}\\
&= A_+A_+^{-k-1}A_{-}^{k+1}A_-^{-1}\tilde{\pp}^{(k)}(\calpha^-), \tag{Using $\pp'=A^{-1}\tilde{\pp}$}\\
&=A_+^{-k}A_-^k \tilde{\pp}^{(k)}(\calpha^-).\label{eqn:p_tilde_jump}
\end{align}
Since \eqref{eqn:p_tilde_jump} holds for every $k=0,1,\dots,m-1$, we conclude that $\tilde{\pp}\in \mathbb{V}^{m-1}_{\calpha,\r}(\cI)$.

Now, we show that $G$ is surjective, by the rank-nullity theorem, it suffices to prove that  $\dim\mathrm{ker}(G)$ is $2$ since $\dim \cVV - \dim \mathbb{V}^{m-1}_{\calpha,\r}(\cI)=2$. Let $\pp \in \mathrm{ker}(G)$, then $A\pp'=0$, since $A$ is invertible, we get $\pp'=0$ which implies that $\pp \in\mathbb{V}^{0}_{\calpha,\r}(\cI)$. This shows that $\dim\mathrm{ker}(G)=\dim \mathbb{V}^{0}_{\calpha,\r}(\cI)=2$.
\end{proof}

Following the definition of $G$, we can re-write \eqref{eqn:reference_formulation_of_Radau:c} as
$$(SG(\v),\cPi \cuu)_{\cI}= (SG(\v),\cuu)_{\cI},\qquad \forall \v\in \cVV .$$
For convenience, we will write $(SG(\v),\cPi \cuu)_{\cI}=(G(\v),\cPi \cuu)_{S,\cI}$. Now, since $G$ maps $\cVV$ onto $\mathbb{V}^{m-1}_{\calpha,\r}(\cI)$, we can express the condition  \eqref{eqn:reference_formulation_of_Radau:c} as
\begin{equation}(\v,\cPi\cuu)_{S,\cI} =(\v,\cuu)_{S,\cI},\qquad \forall\v\in \mathbb{V}^{m-1}_{\calpha,\r}(\cI).\label{eqn:second_form_orthogonality}\end{equation}


\begin{theorem}\label{thm:Radau_uniqueness}
The system \eqref{eqn:reference_formulation_of_Radau} admits  exactly one solution.
\end{theorem}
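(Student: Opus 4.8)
The plan is to recognize \eqref{eqn:reference_formulation_of_Radau} as a square linear system and reduce unique solvability to injectivity of the associated homogeneous problem. First I would count dimensions and constraints: $\cVV$ has dimension $2(m+1)$; since $A^{-}_-$ and $A^{+}_+$ each have rank one, conditions \eqref{eqn:reference_formulation_of_Radau:a} and \eqref{eqn:reference_formulation_of_Radau:b} each impose one scalar constraint, while \eqref{eqn:reference_formulation_of_Radau:c}, rewritten via the surjectivity of $G$ in \autoref{lem:commutative_diagram} as $(\z,\cPi\cuu)_{S,\cI}=(\z,\cuu)_{S,\cI}$ for all $\z\in\mathbb{V}^{m-1}_{\calpha,\r}(\cI)$, imposes $\dim\mathbb{V}^{m-1}_{\calpha,\r}(\cI)=2m$ constraints. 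This gives $2m+2=\dim\cVV$ conditions, so the system is square, and it suffices to show the homogeneous problem (set $\cuu=0$) has only the trivial solution $\pp\equiv 0$.

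So let $\pp\in\cVV$ solve the homogeneous system $A^{-}_-\pp(0)=0$, $A^{+}_+\pp(1)=0$, and $(\tilde{A}\v',\pp)_{\cI}=0$ for all $\v\in\cVV$. The key step is an energy identity obtained by taking $\v=\pp$ in \eqref{eqn:reference_formulation_of_Radau:c}. Since $\tilde{A}$ is a constant symmetric matrix, on each of $\cI^{\pm}$ we have $\frac{d}{dx}\left(\pp^T\tilde{A}\pp\right)=2\,\pp'^T\tilde{A}\pp$, so integrating by parts on $\cI^-$ and $\cI^+$ separately expresses $(\tilde{A}\pp',\pp)_{\cI}$ as $\frac{1}{2}\,\pp^T\tilde{A}\pp$ summed over the four boundary evaluations of the two subintervals. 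Because $R_0=\Id$, every function in $\cVV$ is continuous at $\calpha$, so the two interface contributions at $\calpha^{\pm}$ cancel and I obtain $\pp(1)^T\tilde{A}\pp(1)=\pp(0)^T\tilde{A}\pp(0)$.

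Next I would convert the boundary conditions and the semi-definiteness from \autoref{lem:properties_of_matrices} into sign information. On the right, $\tilde{A}=S_+A_+=S_+(A_+^++A_+^-)$ and $A_+^+\pp(1)=0$, so $\pp(1)^T\tilde{A}\pp(1)=\pp(1)^TS_+A_+^-\pp(1)\le 0$ by part (c); on the left, similarly $\pp(0)^T\tilde{A}\pp(0)=\pp(0)^TS_-A_-^+\pp(0)\ge 0$. Combined with the energy identity both quadratic forms must vanish, giving $\pp(0)^TS_-A_-^+\pp(0)=0$ together with $A_-^-\pp(0)=0$, and $\pp(1)^TS_+A_+^-\pp(1)=0$ together with $A_+^+\pp(1)=0$. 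Applying part (d) of \autoref{lem:properties_of_matrices} (with $s=-,\tilde{s}=-$ on the left and $s=+,\tilde{s}=+$ on the right) then forces $\pp(0)=0$ and $\pp(1)=0$.

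Finally I would combine the orthogonality with these vanishing endpoint values. Since $S$ is a positive piecewise-constant diagonal weight, the relation $(\z,\pp)_{S,\cI}=0$ for all $\z\in\mathbb{V}^{m-1}_{\calpha,\r}(\cI)$ decouples into the two scalar components of $\pp$, each of which therefore lies in the one-dimensional orthogonal complement $\cQ$ for its weight. If a component were nonzero, \autoref{thm:Zeros of orthogonal IFE functions} would give it $m$ distinct interior roots, which together with the two endpoint roots $0$ and $1$ would be $m+2>m$ roots, contradicting \autoref{thm:Zeros of IFE basis functions}; hence each component, and thus $\pp$, is zero. (For $m=0$ the orthogonality is vacuous and $\pp=0$ follows directly from $\pp(0)=\pp(1)=0$ since $\pp$ is then constant.) Injectivity of the square system then yields both existence and uniqueness. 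I expect the main obstacle to be the bookkeeping in the energy step: verifying that the interface terms genuinely cancel (which hinges on continuity at $\calpha$, i.e. $R_0=\Id$) and correctly pairing each boundary condition with the right semi-definite form so that part (d) of \autoref{lem:properties_of_matrices} can be invoked.
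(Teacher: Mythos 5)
Your proposal is correct and follows essentially the same route as the paper's proof: reduce the square $2(m+1)\times 2(m+1)$ system to injectivity of the homogeneous problem, use the energy identity $\left(\tilde{A}\q',\q\right)_{\cI}=\frac{1}{2}\left(\q(1)^TS_+A_+\q(1)-\q(0)^TS_-A_-\q(0)\right)$ together with the boundary conditions and parts (c), (d) of \autoref{lem:properties_of_matrices} to force $\q(0)=\q(1)=0$, and then invoke the orthogonality (via \autoref{thm:Zeros of orthogonal IFE functions} and the root bound of \autoref{thm:Zeros of IFE basis functions}) componentwise to conclude $\q\equiv 0$. The only cosmetic differences are that you count the constraints via the rank-one structure of $A^-_-$, $A^+_+$ and the surjectivity of $G$, where the paper instead writes out the eigen-decomposed system \eqref{eqn:reference_formulation_of_Radau_with_R}, and you additionally handle the (vacuous) case $m=0$ explicitly.
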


\begin{proof} First, we prove that the system admits at most one solution, for that,  we only need to show that if $\cuu=0$, then $\cPi\cuu=0$. For simplicity, let $\q=(q_1,q_2)^T$ be the solution to \eqref{eqn:reference_formulation_of_Radau} with $\cuu=0$, and let $r^{(1)}=r^p$ and $r^{(2)}=r^u$,  then by \eqref{eqn:second_form_orthogonality}, we have
\begin{equation}(v,q_i)_{w_i,\cI} =0,\ \forall v\in \V^{m-1}_{\calpha,r^{(i)}}(\cI),\text{ where } w_i=S_{i,i}, \qquad i=1,2,\label{eqn:q_orthogonal}\end{equation}
which is equivalent to $q_i\in \mathcal{Q}^{m}_{\calpha,w_i,r^{(i)}}(\cI)$. On the other hand, we have
\begin{equation}\left( \tilde A\q',\q \right)_{\cI} = \frac{1}{2}\left(\q(1)^TS_+ A_+\q(1)-\q(0)^T S_-A_-\q(0)\right)=0.\label{eqn:q_integral}
\end{equation}

From \eqref{eqn:reference_formulation_of_Radau:a} and \eqref{eqn:reference_formulation_of_Radau:b}, we have $A^+_+\q(1)=A^-_-\q(0)=0$, then $ A_+\q(1)=A_+^-\q(1)$ and $A_-\q(0)=A_-^+\q(0)$. Therefore, the equation \eqref{eqn:q_integral} becomes
$$\frac{1}{2}\left(\q(1)^TS_+ A_+^-\q(1)-\q(0)^T S_-A_-^+\q(0)\right)=0.$$
Now, by \autoref{lem:properties_of_matrices} part (c), the quantities $\q(1)^TS_+ A_+^-\q(1)$ and $-\q(0)^T S_-A_-^+\q(0)$ are non-positive, then
$$\q(1)^TS_+ A_+^-\q(1)=\q(0)^T S_-A_-^+\q(0)=0,$$
Furthermore, by \eqref{eqn:reference_formulation_of_Radau:a} and \eqref{eqn:reference_formulation_of_Radau:b}, we have
$$\norm{A_+^+\q(1)}^2+|\q(1)^TS_+ A_+^-\q(1)|=\norm{A_-^-\q(0)}^2+|\q(0)^T S_-A_-^+\q(0)|=0.$$
At this point, we use \autoref{lem:properties_of_matrices} part (d) to conclude that $\q(1)=\q(0)=0$. Therefore, $q_i$ are orthogonal IFE functions (as shown in \eqref{eqn:q_orthogonal}) that vanish on the boundary. By \autoref{thm:Zeros of orthogonal IFE functions}, we conclude that $q_i\equiv 0$ for  $i=1,2$. Equivalently, $\q\equiv 0$.

To finalize the proof, we only need to show that \eqref{eqn:reference_formulation_of_Radau} can be written as a square system. Let $A_{\pm}=P_\pm\diag(-c_\pm,c_\pm)P_{\pm}^{-1}$ be an eigen-decomposition of $A_{\pm}$. Then, \eqref{eqn:reference_formulation_of_Radau} can be written as
\begin{equation}
    \begin{cases}
    \left(P_-^{-1} \cPi\cuu(0)\right)_1 &=  \left(P_-^{-1}\cuu(0)\right)_1\\
    \left(P_+^{-1} \cPi\cuu(1)\right)_2 &=  \left(P_+ ^{-1}\cuu(1)\right)_2,\\
    \left( \N^j_{\calpha,r^{p}},(\cPi\cuu)_i\right)_{S_{11},\cI}&=  \left( \N^j_{\calpha,r^{p}},\check{p}\right)_{S_{22},\cI}, \qquad \ 1\le j\le m-1,\\
     \left( \N^j_{\calpha,r^{u}},(\cPi\cuu)_i\right)_{S_{22},\cI}&=  \left( \N^j_{\calpha,r^{u}},\check{u}\right)_{S_{22},\cI}, \qquad \ 1\le j\le m-1
    \end{cases}
    \label{eqn:reference_formulation_of_Radau_with_R}
\end{equation}
which is a system of $2(m+1)$ equations with $2(m+1)$ variables. Since the homogeneous system admits at most one solution, we conclude that \eqref{eqn:reference_formulation_of_Radau} has exactly one solution.

\end{proof}

Next, we show that $\{\cPi\}_{0<\calpha<1}$ is uniformly bounded. First, let  $\pp\in\VV^{m-1}_{\calpha,\r}(\cI) $ be the solution to the following symmetric positive definite system
\begin{equation}
     \left(  \v,\pp\right)_{S,\cI}= \left(\v, \cuu\right)_{\cI},\ \forall \v\in \VV^{m-1}_{\calpha,\r}(\cI),\label{eqn:p_definition}
\end{equation}
and let $\q =\cPi \cuu -\pp$, then by \eqref{eqn:reference_formulation_of_Radau:c} and \eqref{eqn:second_form_orthogonality}, we have

\begin{equation}\left(  \tilde{A}\v',\q\right)_{\cI}=\left(  \tilde{A}\v',\cPi\cuu-\pp\right)_{\cI}=0,\qquad \forall \v\in \VV^{m}_{\calpha,\r}(\cI),
\label{eqn:q_is_in_Q}
\end{equation}
which can be written as
$$\left(  \v,\q\right)_{S,\cI}=0,\qquad \forall\v\in \VV^{m-1}_{\calpha,\r}(\cI). $$
Thus, $\q\in \cQQ=\mathcal{Q}^m_{\calpha,S_{11},r^p}(\cI)\times \mathcal{Q}^m_{\calpha,S_{22},r^u}(\cI)$. Additionally, by \eqref{eqn:reference_formulation_of_Radau:a} and \eqref{eqn:reference_formulation_of_Radau:b}, we have

    \begin{equation}    \begin{cases}
    A^{-}_- \q(0) =  A^{-}_- \left(\cuu(0)-\pp(0)\right),\\
    A^{+}_+ \q(1) =  A^{+}_+ \left(\cuu(1)-\pp(1)\right).\end{cases}
    \label{eqn:trace_of_q}
    \end{equation}


    In the next two lemmas, we prove that $\norm{\pp}_{0,\cI}$ and $\norm{\q}_{0,\cI}$ is bounded by some appropriate norms of ${\color{red}\cuu}$ independently of $\calpha$. Both lemmas will be used later in \autoref{thm:Radau_stability} to prove that $\{\cPi\}_{0<\calpha<1}$ is a uniformly bounded collection of RIFE projections.

    \begin{lemma}\label{lem:p_norm}
 Let $\cuu\in\HH^{m+1}_{\calpha,r}(\cI)$ and $\pp\in\VV^{m-1}_{\calpha,\r}(\cI)$ defined by \eqref{eqn:p_definition}, then there is $C(\rho,c)>0$ independent of $\calpha$ such that  $\norm{\pp}_{0,\cI}\le C(\rho,c) \norm{\cuu}_{0,\cI}$.
    \end{lemma}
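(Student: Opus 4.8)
The plan is to recognize $\pp$ as a weighted projection and to close the estimate by a one-line energy argument, testing against $\pp$ itself. First I would record that, by part (b) of \autoref{lem:properties_of_matrices}, the matrix $S(x)$ is symmetric positive definite, so the bilinear form $(\cdot,\cdot)_{S,\cI}$ is genuinely an inner product on the finite-dimensional space $\VV^{m-1}_{\calpha,\r}(\cI)$. The right-hand side $\v\mapsto(\v,\cuu)_{\cI}$ of \eqref{eqn:p_definition} is a bounded linear functional, so $\pp$ exists and is unique by the Riesz representation theorem; this is exactly what the phrase ``symmetric positive definite system'' in the statement refers to, and $\pp\in\VV^{m-1}_{\calpha,\r}(\cI)$ by construction.

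The core estimate comes from choosing the admissible test function $\v=\pp$ in \eqref{eqn:p_definition}, which yields $(\pp,\pp)_{S,\cI}=(\pp,\cuu)_{\cI}$. For the left-hand side I would use pointwise coercivity of the weight: since $S_\pm=\diag(\rho_\pm^{-1}c_\pm^{-1},\rho_\pm)$ is diagonal with entries bounded below by $\lambda:=\min_{s=\pm}\min(\rho_s^{-1}c_s^{-1},\rho_s)>0$, we have $\w^T S(x)\w\ge\lambda\norm{\w}^2$ for every $\w\in\mathbb{R}^2$, hence $(\pp,\pp)_{S,\cI}\ge\lambda\norm{\pp}_{0,\cI}^2$. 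For the right-hand side I would apply the Cauchy--Schwarz inequality in the standard $L^2$ inner product to obtain $(\pp,\cuu)_{\cI}\le\norm{\pp}_{0,\cI}\norm{\cuu}_{0,\cI}$. Combining the two bounds and cancelling one factor of $\norm{\pp}_{0,\cI}$ gives $\norm{\pp}_{0,\cI}\le\lambda^{-1}\norm{\cuu}_{0,\cI}$, which is the claim with $C(\rho,c)=\lambda^{-1}$.

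The single point that deserves emphasis, and the reason the constant is $\calpha$-independent, is that the coercivity constant $\lambda$ is dictated solely by the spectral bounds of the piecewise-constant matrix $S$, whose values $S_\pm$ depend only on $\rho_\pm,c_\pm$ and not on the interface location $\calpha$. Consequently no scaling-argument machinery from \autoref{sec:properties_of_v} is needed here: this is a direct energy estimate rather than a Bramble--Hilbert-type bound. I do not expect a genuine obstacle; the only things to verify carefully are that $\pp$ is a legitimate test function (it lies in $\VV^{m-1}_{\calpha,\r}(\cI)$) and that the two sides are treated consistently, namely coercivity applied to the $S$-weighted form on the left and Cauchy--Schwarz applied to the unweighted $L^2$ product on the right.
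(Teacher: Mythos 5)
Your proposal is correct and follows essentially the same route as the paper's proof: test \eqref{eqn:p_definition} with $\v=\pp$, bound the left side below by the smallest diagonal entry of $S_\pm$ (which depends only on $\rho_\pm,c_\pm$, hence is $\calpha$-independent), and apply Cauchy--Schwarz on the right before cancelling one factor of $\norm{\pp}_{0,\cI}$. The only difference is cosmetic: your explicit Riesz-representation justification of well-posedness is left implicit in the paper, and your coercivity constant uses the $\diag(\rho_\pm^{-1}c_\pm^{-1},\rho_\pm)$ form of $S_\pm$ from \autoref{lem:properties_of_matrices} while the paper's proof uses $\rho_\pm^{-1}c_\pm^{-2}$ (an inconsistency internal to the paper, immaterial to the result).
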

    \begin{proof}
    Let $\v=\pp$ in \eqref{eqn:p_definition}, then $\norm{\pp^TS\pp}_{0,\cI}=(\pp,\cuu)_{0,\cI}\le \norm{\pp}_{0,\cI}\norm{\cuu}_{0,\cI}$. On the other hand, by construction of $S$, we have $\norm{\pp^TS\pp}_{0,\cI}\ge C(\rho,c)\norm{\pp}_{0,\cI}$, where $C(\rho,c)=\min(\rho_{-}^{-1}c_-^{-2}, \rho_+^{-1}c_+^{-2}, \rho_{-},\rho_+)$. Therefore,
    $$\norm{\pp}_{0,\cI}\le C(\rho,c)\norm{\cuu}_{0,\cI}.$$
    \end{proof}

 \begin{lemma}\label{lem:q_norm}
     Let $\cuu\in\HH^{m+1}_{\calpha,r}(\cI)$ and let $\pp\in\VV^{m-1}_{\calpha,\r}(\cI)$ defined by \eqref{eqn:p_definition}. Then, there is $C(\rho,c,m)>0 $ independent of $\calpha$ such that $$\norm{\cPi\cuu-\pp}_{0,\cI}\le C(\rho,c,m)\norm{\cuu}_{1,\cI}$$
    \end{lemma}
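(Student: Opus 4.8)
The plan is to control the two-vector $\q=\cPi\cuu-\pp$ entirely through its boundary values. It has already been shown that $\q\in\cQQ=\mathcal{Q}^m_{\calpha,S_{11},r^p}(\cI)\times\mathcal{Q}^m_{\calpha,S_{22},r^u}(\cI)$, so applying \autoref{lem:norm_on_Q} to each scalar component (with the $\calpha$-independent constant it provides) and summing gives
\begin{equation*}
\norm{\q}_{0,\cI}^2\le C\left(\norm{\q(0)}^2+\norm{\q(1)}^2\right).
\end{equation*}
Hence it suffices to bound the Euclidean norms of the boundary values $\q(0),\q(1)$ by $\norm{\cuu}_{1,\cI}$, uniformly in $\calpha$.

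First I would extract an energy identity. Choosing $\v=\q$ in \eqref{eqn:q_is_in_Q} yields $(\tilde{A}\q',\q)_{\cI}=0$; since the zeroth-order jump coefficients $r^p_0=r^u_0=1$ make $\q$ continuous at $\calpha$ and $\tilde{A}=S_\pm A_\pm$ is constant, the integration by parts already performed in \eqref{eqn:q_integral} gives
\begin{equation*}
\q(1)^TS_+A_+\q(1)=\q(0)^TS_-A_-\q(0).
\end{equation*}
Next I would introduce the coercive quantity $Q:=\q(1)^TS_+|A_+|\q(1)+\q(0)^TS_-|A_-|\q(0)$, which by part (e) of \autoref{lem:properties_of_matrices} satisfies $Q\ge C(\rho,c)\left(\norm{\q(0)}^2+\norm{\q(1)}^2\right)$. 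Writing $A_\pm=A_\pm^++A_\pm^-$ and using the energy identity to eliminate the two indefinite cross terms $\q(1)^TS_+A_+^-\q(1)$ and $\q(0)^TS_-A_-^+\q(0)$ collapses $Q$ to
\begin{equation*}
Q=2\left(\q(1)^TS_+A_+^+\q(1)-\q(0)^TS_-A_-^-\q(0)\right).
\end{equation*}

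Now the trace conditions \eqref{eqn:trace_of_q}, namely $A_+^+\q(1)=A_+^+\g_1$ and $A_-^-\q(0)=A_-^-\g_0$ with $\g_1:=\cuu(1)-\pp(1)$ and $\g_0:=\cuu(0)-\pp(0)$, together with the symmetry of $S_\pm A_\pm^{\pm}$ from part (c) of \autoref{lem:properties_of_matrices}, let me replace one factor $\q(1),\q(0)$ in each term by $\g_1,\g_0$, so that $Q\le C\left(\norm{\q(1)}\,\norm{\g_1}+\norm{\q(0)}\,\norm{\g_0}\right)$ by Cauchy--Schwarz. Comparing with the coercive lower bound and cancelling one power gives $\left(\norm{\q(0)}^2+\norm{\q(1)}^2\right)^{1/2}\le C\left(\norm{\g_0}^2+\norm{\g_1}^2\right)^{1/2}$.

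It remains to bound $\norm{\g_0},\norm{\g_1}$ by $\norm{\cuu}_{1,\cI}$. Because $\cuu$ and $\pp$ are continuous at $\calpha$ they lie in $H^1(\cI)$ globally, so the standard trace inequality on the fixed unit interval $\cI=[0,1]$ yields $\norm{\cuu(0)},\norm{\cuu(1)}\le C\norm{\cuu}_{1,\cI}$ and $\norm{\pp(0)},\norm{\pp(1)}\le C\norm{\pp}_{1,\cI}$ with $\calpha$-independent constants; the inverse inequality \eqref{eqn:inverse_inequality} then replaces $\norm{\pp}_{1,\cI}$ by $\norm{\pp}_{0,\cI}$, which \autoref{lem:p_norm} bounds by $\norm{\cuu}_{0,\cI}$. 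Chaining all estimates yields the claim. I expect the main obstacle to be the algebraic reduction of $Q$ through the energy identity, and above all verifying that every constant is genuinely independent of $\calpha$; that $\calpha$-uniformity rests on \autoref{lem:norm_on_Q}, on the coercivity in part (e) of \autoref{lem:properties_of_matrices}, and on the global-$H^1$ trace bound that becomes available precisely because the zeroth-order jump is trivial.
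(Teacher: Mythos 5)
Your proof is correct and follows essentially the same route as the paper's: the same energy identity $(\tilde{A}\q',\q)_{\cI}=0$ obtained by taking $\v=\q$ in \eqref{eqn:q_is_in_Q}, the trace conditions \eqref{eqn:trace_of_q}, the coercivity from part (e) of \autoref{lem:properties_of_matrices}, the chain through the trace inequality, the inverse inequality \eqref{eqn:inverse_inequality} and \autoref{lem:p_norm}, and finally \autoref{lem:norm_on_Q} applied componentwise on $\cQQ$. The only difference is cosmetic: you eliminate the indefinite cross terms exactly via $Q=\q(1)^TS_+|A_+|\q(1)+\q(0)^TS_-|A_-|\q(0)$, arriving at $Q=2\left(\q(1)^TS_+A_+^+\q(1)-\q(0)^TS_-A_-^-\q(0)\right)$, whereas the paper adds the identity \eqref{eqn:q1_q0_expanded} to the two coercivity inequalities \eqref{eqn:SA_q_norm} and carries a factor $\tfrac{1}{2}$ --- algebraically the same estimate.
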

\begin{proof}
    Let $\q=(q_1,q_2)=\cPi\cuu-\pp$, then by \eqref{eqn:q_is_in_Q}, we have $(\tilde{A}\q',\q)_{\cI}=0$ which is equivalent to
    \begin{equation}\q(1)^T\tilde{A}\q(1) -\q(0)^T \tilde{A}\q(0)=0
    \label{eqn:q1_q0}\end{equation}
    By decomposing $A_{\pm}=A^+_{\pm}+A^-_\pm$ and using the definition of $\tilde{A}$ in \eqref{eqn:definition_of_tilde_A}, we can split \eqref{eqn:q1_q0} as
    \begin{equation}\q(1)^TS_+A_+^+\q(1) +\q(1)^TS_+A_+^-\q(1)-\q(0)^T S_-A_-^+\q(0)-\q(0)^T S_-A_-^-\q(0)=0.
    \label{eqn:q1_q0_expanded}\end{equation}
    Now, from \eqref{eqn:SA_vector_norm}, we have

    \begin{subequations}\label{eqn:SA_q_norm}
        \begin{equation}
             \q(0)^TS_-A_-^+ \q(0)-\q(0)^T S_- A_-^-\q(0) \ge C_1(\rho,c)\norm{\q(0)}^2,
             \label{eqn:SA_q_norm:0}
        \end{equation}
        \begin{equation}
             \q(1)^TS_+A_+^+ \q(1)-\q(1)^T S_+ A_+^-\q(1) \ge C_1(\rho,c)\norm{\q(1)}^2.
             \label{eqn:SA_q_norm:1}
        \end{equation}
    \end{subequations}
    Next, we sum \eqref{eqn:q1_q0_expanded}, \eqref{eqn:SA_q_norm:0} and \eqref{eqn:SA_q_norm:1} to obtain

      \begin{equation}
            \q(1)S_+A_+^+ \q(1)-\q(0)S_-A_-^- \q(0)\ge \frac{1}{2}C_1(\rho,c)\left(\norm{\q(0)}^2+\norm{\q(1)}^2\right)
        \label{eqn:q_before_trace}\end{equation}
    We substitute \eqref{eqn:trace_of_q} in \eqref{eqn:q_before_trace} to obtain
    \begin{equation}
            \q(1)S_+A_+^+ (\cuu(1)-\pp(1))-\q(0)S_-A_-^- (\cuu(0)-\pp(0))\ge \frac{1}{2}C_1(\rho,c)\left(\norm{\q(0)}^2+\norm{\q(1)}^2\right)
        \label{eqn:q_after_trace}\end{equation}
Now, we will bound the left hand side from above. First, we have
\begin{equation}\q(1)S_+A_+^+ (\cuu(1)-\pp(1))-\q(0)S_-A_-^- (\cuu(0)-\pp(0))\le C_2(\rho,c) \left(\norm{\q(1)}\norm{\cuu(1)-\pp(1)}+\norm{\q(0)}\norm{\cuu(0)-\pp(0)}\right)\label{eqn:remove_SA}\end{equation}
Since $\cuu-\pp\in (H^1(\cI))^2$, there is $C_3>0$ such that
\begin{equation}
    \max(\norm{\cuu(0)-\pp(0)}, \norm{\cuu(1)-\pp(1)})\le C_3\left(\norm{\cuu}_{1,\cI}+\norm{\pp}_{1,\cI}\right).
    \label{eqn:u_check_minus_p}
\end{equation}
By applying the inverse inequality  \eqref{eqn:inverse_inequality} and \autoref{lem:p_norm} to $\norm{\pp}_{1,\cI}$, we obtain
\begin{align}
     \max(\norm{\cuu(0)-\pp(0)}, \norm{\cuu(1)-\pp(1)})
     &\le C_4(\rho,c,m)\left(\norm{\cuu}_{1,\cI}+\norm{\pp}_{0,\cI}\right),\notag
     \\ &\le C_5(\rho,c,m)\norm{\cuu}_{1,\cI}
    \label{eqn:u_check_minus_p_2}
\end{align}
Now, we substitute \eqref{eqn:u_check_minus_p_2} and \eqref{eqn:remove_SA} back into \eqref{eqn:q_after_trace} and use the inequality $a^2+b^2 \ge \frac{1}{2}(a+b)^2$ to obtain

$$\left(\norm{\q(0)}+\norm{\q(1)}\right)\norm{\cuu}_{1,\cI}\ge
C_6(\rho,c,m)\left(\norm{\q(0)}+\norm{\q(1)}\right)^2,$$
which yields
\begin{equation}\norm{\q(0)}+\norm{\q(1)}\le
C_7(\rho,c,m)\norm{\cuu}_{1,\cI}.
\label{eqn:q0_q1_before_trace}
\end{equation}

To finish the proof, we recall that $\q\in \mathcal{Q}^m_{\calpha,S_{11},r^p}(\cI)\times \mathcal{Q}^m_{\calpha,S_{22},r^u}(\cI)$. Therefore, by \autoref{lem:norm_on_Q} and some elementary algebraic manipulations, we have

\begin{align}
\norm{\q}_{0,\cI}&=\sqrt{\norm{q_1}_{0,\cI}^2 +\norm{q_2}_{0,\cI}^2}\tag{By defintion} \\
&\le C_7(\rho,c,m)\sqrt{q_1(0)^2+q_1(1)^2+q_2(0)^2+q_2(1)^2},\tag{Using \autoref{lem:norm_on_Q}}\\
&\le C_8(\rho,c,m)\left(\norm{\q(0)}+\norm{\q(1)}\right)\notag\\
\norm{\q}_{0,\cI}&\le C_9(\rho,c,m)\norm{\cuu}_{1,\cI},\tag{using \eqref{eqn:q0_q1_before_trace}}
\end{align}
which is the desired result.
\end{proof}

By combining \autoref{lem:p_norm} and \autoref{lem:q_norm}, we can show that the norm of $\cPi\cuu$ can be bounded by a norm of $\cuu$ independently of $\calpha$ as described in the following theorem. We note that $\cPi$ maps $\HH^{m+1}_{\calpha,\r}(\cI)$ to $\cVV$. Nevertheless, we shall call $\cPi$ a RIFE projection since the results from \autoref{sec:Bramble_Hilbert} in the scalar case apply directly to the vector case here.

    \begin{theorem}\label{thm:Radau_stability}
    Let $m\ge 1$ and let $\cuu\in \HH^{m+1}_{\calpha,\r}(\cI)$. Then, there is $C(\rho,c,m)>0$ independent of $\calpha$ such that
    $$\norm{\cPi\cuu}_{0,\cI} \le C(\rho,c,m)\norm{\cuu}_{1,\cI}$$
    That is, $\{\cPi\}_{0<\calpha<1} $ is a uniformly bounded collection of RIFE projections.
    \end{theorem}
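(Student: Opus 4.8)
The plan is to exploit the decomposition $\cPi\cuu = \pp + \q$ that has already been assembled, where $\pp\in\VV^{m-1}_{\calpha,\r}(\cI)$ solves the symmetric positive-definite system \eqref{eqn:p_definition} and $\q = \cPi\cuu - \pp$. Since each summand has already been controlled individually---$\pp$ in \autoref{lem:p_norm} and $\q$ in \autoref{lem:q_norm}---the estimate should drop out of the triangle inequality, so no genuinely new analysis is required at this stage.

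Concretely, first I would write
$$\norm{\cPi\cuu}_{0,\cI} = \norm{\pp + \q}_{0,\cI} \le \norm{\pp}_{0,\cI} + \norm{\q}_{0,\cI}.$$
Then I would invoke \autoref{lem:p_norm} to bound $\norm{\pp}_{0,\cI}\le C(\rho,c)\norm{\cuu}_{0,\cI}$ and \autoref{lem:q_norm} to bound $\norm{\q}_{0,\cI}=\norm{\cPi\cuu-\pp}_{0,\cI}\le C(\rho,c,m)\norm{\cuu}_{1,\cI}$. Finally, using the trivial domination $\norm{\cuu}_{0,\cI}\le\norm{\cuu}_{1,\cI}$, I would absorb both constants into a single $C(\rho,c,m)$ to reach
$$\norm{\cPi\cuu}_{0,\cI} \le C(\rho,c,m)\norm{\cuu}_{1,\cI}.$$

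The point worth emphasizing is that the constants coming out of \autoref{lem:p_norm} and \autoref{lem:q_norm} are \emph{both} independent of the interface location $\calpha$; this is precisely what upgrades the bound to uniform boundedness in the sense of \autoref{def:uniform_boundedness}, here with the admissible choice $j=1$ (legitimate since $0\le 1\le m+1$ when $m\ge 1$). To justify calling $\{\cPi\}_{0<\calpha<1}$ a collection of \emph{projections}, I would recall that if $\cuu\in\cVV$ then $\cuu$ itself satisfies \eqref{eqn:reference_formulation_of_Radau}, so $\cPi\cuu=\cuu$ by the uniqueness established in \autoref{thm:Radau_uniqueness}; and I would remark that, although \autoref{def:uniform_boundedness} is phrased for the scalar RIFE space, it and its consequences transfer verbatim to the product space $\cVV$ because the extended jump conditions \eqref{eqn:def_of_R} decouple componentwise.

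I do not expect any real obstacle here: all of the difficulty has been front-loaded into \autoref{lem:q_norm}, where the orthogonality $\q\in\cQQ$, the boundary trace identities \eqref{eqn:trace_of_q}, the matrix positivity of \autoref{lem:properties_of_matrices}, and the $\calpha$-independent norm equivalence of \autoref{lem:norm_on_Q} were combined. The present statement is merely the packaging step that converts those two lemmas into the uniform-boundedness conclusion, which then feeds into \autoref{thm:general_theorem_LIFE} to deliver the optimal $O(h^{m+1})$ immersed Radau projection error on the physical interface element.
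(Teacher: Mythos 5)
Your proposal is correct and follows essentially the same route as the paper: decompose $\cPi\cuu = \pp + \q$, apply the triangle inequality, and invoke \autoref{lem:p_norm} and \autoref{lem:q_norm} with the trivial bound $\norm{\cuu}_{0,\cI}\le\norm{\cuu}_{1,\cI}$. Your additional remarks on the projection property via \autoref{thm:Radau_uniqueness} and the componentwise transfer to the product space are sound and make explicit what the paper leaves implicit.
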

      \begin{proof}
    By definition of $\pp$ and $\q$, we have
    $$ \norm{\cPi\cuu}_{0,\cI}\le \norm{\pp}_{0,\cI}+\norm{\q}_{0,\cI},$$
    Which, using \autoref{lem:p_norm} and \autoref{lem:q_norm}, leads to
$$ \norm{\cPi\cuu}_{0,\cI}\le C(\rho,c,m)\norm{\cuu}_{1,\cI},$$
    where $C(\rho,c,m)>0$ is independent of $\calpha$.
    \end{proof}

    \begin{corollary}
   Let $\u\in\HH^{m+1}_{\alpha,\r}(\Ik)$, then  there is $C(S_{\pm},A_{\pm},m)>0$ independent of $\alpha$ and $h$ such that
    $$\left|\u- \Pi^{m}_{\calpha,\r} \u\right|_{i,\Ik}<C(S_{\pm},A_{\pm},m)h^{m+1-i}|\u|_{m+1,\Ik},\qquad 0\le i\le m.$$
    \end{corollary}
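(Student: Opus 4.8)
The plan is to recognize that this corollary is an immediate consequence of the general scaling-argument machinery of \autoref{sec:Bramble_Hilbert}, once we certify that the reference immersed Radau projection $\cPi$ satisfies the abstract hypotheses of a uniformly bounded collection of RIFE projections, now interpreted in the vector-valued setting of the product spaces $\HH^{m+1}_{\calpha,\r}(\cI)$ and $\cVV$. First I would verify the projection property \eqref{eqn:RIFE_preserving}: for any $\cuu\in\cVV$, the element $\cuu$ itself trivially satisfies all three relations in the defining system \eqref{eqn:reference_formulation_of_Radau}, since each equation has identical left and right sides when $\cPi\cuu$ is replaced by $\cuu$. Hence, by the uniqueness established in \autoref{thm:Radau_uniqueness}, we conclude $\cPi\cuu=\cuu$, so $\cPi$ is a projection onto $\cVV$.

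Next I would invoke \autoref{thm:Radau_stability}, which asserts precisely that $\norm{\cPi\cuu}_{0,\cI}\le C(\rho,c,m)\norm{\cuu}_{1,\cI}$ with a constant independent of $\calpha$. This is exactly the uniform-boundedness estimate \eqref{eqn:def_uniformly_bounded} of \autoref{def:uniform_boundedness} with the choice $j=1$. Combining the projection property with this bound shows that $\{\cPi\}_{0<\calpha<1}$ is a uniformly bounded collection of RIFE projections, in the vector sense.

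The remaining step is to transfer the scalar results of \autoref{sec:Bramble_Hilbert} to the product space. Because $\HH^{m+1}_{\calpha,\r}(\cI)=\H^{m+1}_{\calpha,r^p}(\cI)\times\H^{m+1}_{\calpha,r^u}(\cI)$ and $\cVV=\V^m_{\calpha,r^p}(\cI)\times\V^m_{\calpha,r^u}(\cI)$, and since the norms $\norm{\cdot}_{i,\cI}$ and seminorms $\abs{\cdot}_{i,\cI}$ on these products are the $\ell^2$-aggregates of their two scalar components, every scalar ingredient extends by applying it componentwise and summing. Specifically, the componentwise auxiliary operator sending $\cuu=(\cu_1,\cu_2)$ to $(\cpi\cu_1,\cpi\cu_2)$ (with the respective sequences $r^p,r^u$) lands in $\cVV$ and inherits, from \autoref{lem:hyper_orthogonality} and \autoref{thm:immersed_bramble_hilbert}, the vector Bramble-Hilbert estimate $\norm{\cuu-(\cpi\cu_1,\cpi\cu_2)}_{m+1,\cI}\le C\abs{\cuu}_{m+1,\cI}$; and the componentwise inverse inequality of \autoref{lemma:inverse_inequality} yields the vector analogue of \autoref{lemma:uniform_bound_for_cP}, namely $\norm{\cPi\cuu}_{i,\cI}\le C\norm{\cuu}_{m+1,\cI}$ for $0\le i\le m+1$, so that $\norm{\cPi}_{i,m+1,\cI}\le C$ uniformly in $\calpha$.

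Finally I would apply the vector form of \autoref{thm:error_of_general_projection} (equivalently \autoref{thm:general_theorem_LIFE}) to the physical operator $\Pi^m_{\alpha,\r}=\M^{-1}\circ\cPi\circ\M$ with $j=m+1$, repackaging the dependence on $\rho_\pm,c_\pm$ into the constant $C=C(S_\pm,A_\pm,m)$, which delivers $\abs{\u-\Pi^m_{\alpha,\r}\u}_{i,\Ik}\le Ch^{m+1-i}\abs{\u}_{m+1,\Ik}$ for $0\le i\le m$. I expect the only genuine subtlety to be the vectorization: one must observe that although $\cPi$ itself couples the two unknowns through $\tilde A$ and the trace matrices $A^{\pm}_\pm$, the Bramble-Hilbert argument never requires $\cPi$ to act componentwise. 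It needs only the auxiliary operator, which does act componentwise, together with the two abstract properties — projection onto $\cVV$ and uniform boundedness — that \autoref{thm:Radau_uniqueness} and \autoref{thm:Radau_stability} have already supplied.
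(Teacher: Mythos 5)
Your proposal is correct and takes essentially the same route as the paper: the paper's proof likewise combines the projection property of $\cPi$ and the uniform bound of \autoref{thm:Radau_stability} with the componentwise auxiliary operator $\check{\boldsymbol{\pi}}^m_{\calpha,\r}$ from \autoref{lem:hyper_orthogonality}, the immersed Bramble--Hilbert estimate of \autoref{thm:immersed_bramble_hilbert}, the inverse inequality of \autoref{lemma:inverse_inequality}, and the affine scaling between $\Ik$ and $\cI$, the only cosmetic difference being that the paper inlines this chain of estimates rather than citing the abstract \autoref{thm:general_theorem_LIFE} with $j=m+1$. Your closing observation that only the auxiliary operator needs to act componentwise, while $\cPi$ itself may couple the unknowns through $\tilde{A}$ and the trace matrices, is precisely the remark the paper makes just before \autoref{thm:Radau_stability}.
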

    \begin{proof}
    Let $\cuu=(\check{p},\check{u})^T=\M \u$ and let $\check{\boldsymbol{\pi}}^m_{\calpha,\r}\cu=(\check{\pi}^m_{\calpha,r^p} \check{p},\check{\pi}^m_{\calpha,r^u} \check{u})^T$, where $\check{\pi}^m_{\calpha,r^p} \check{p},\check{\pi}^m_{\calpha,r^u} $ are defined in \autoref{lem:hyper_orthogonality}. Then by \autoref{thm:immersed_bramble_hilbert}, we have
    \begin{equation}|\cuu-\check{\boldsymbol{\pi}}^m_{\calpha,\r}\cuu|_{i,\cI}\le C(\rho,c,m)|\cuu|_{m+1,\cI},\qquad i=0,1,\dots,m+1.
    \label{eqn:vector_BH}\end{equation}
    On the other hand, we have
    \begin{align}
        \left|\u- \Pi^{m}_{\calpha,\r} \u\right|_{i,\Ik}&=h^{1-i}\left|\cuu- \cPi \cuu\right|_{i,\cI}\notag \\
       &\le h^{1-i}\left(\left|\cPi\left(\cuu- \check{\boldsymbol{\pi}}^m_{\calpha,\r} \cuu\right)\right|_{i,\cI}
        +\left|\cuu- \check{\boldsymbol{\pi}}^m_{\calpha,\r}\cuu \right|_{i,\cI}
        \right)\notag \\
        &\le  C_1(\rho,c,m)h^{1-i}\left(\left|\cPi\left(\cuu- \check{\boldsymbol{\pi}}^m_{\calpha,\r} \cuu\right)\right|_{0,\cI}
        +\left|\cuu- \check{\boldsymbol{\pi}}^m_{\calpha,\r}\cuu \right|_{i,\cI}
        \right)\tag{Using \autoref{lemma:inverse_inequality}} \\
       & \le
       C_2(\rho,c,m)h^{1-i}\left(\norm{\cuu- \check{\boldsymbol{\pi}}^m_{\calpha,\r} \cuu}_{1,\cI}
        +\left|\cuu- \check{\boldsymbol{\pi}}^m_{\calpha,\r}\cuu \right|_{i,\cI}
        \right)\tag{Using \autoref{thm:Radau_stability}} \\
        & \le  C_3(\rho,c,m)h^{1-i}\norm{\cuu-\check{\boldsymbol{\pi}}^m_{\calpha,\r}\cuu }_{m+1,\cI}\notag \\
        &\le C_4(\rho,c,m)h^{1-i}|\cuu|_{m+1,\cI}\tag{From \eqref{eqn:vector_BH} }\\
        &= C_4(\rho,c,m)h^{m+1-i}|\u|_{m+1,\Ik}.\notag
    \end{align}

    \end{proof}

    By summing over all elements, we get a similar bound for the global Radau projection $\Ru$  with a function $\u\in \HH^{m+1}_{\alpha,\r}(I)$
    \begin{equation}
        \norm{\u- \mathcal{R}\u}_{i,I}<C(S_{\pm},A_{\pm},m)h^{m+1-i}|\u|_{m+1,I},\qquad 0\le i\le m.
        \label{eqn:Radau_global_rate}
    \end{equation}

\begin{theorem}
Let $\u$ be the solution of problem \eqref{eqn:acoustic_matrix_form} and let $\u_h\in\WW^m_{\alpha,\r}(I) $  be the solution of \eqref{eqn:general_IDG_form_compact}. If $\u \in C([0,T];\HH^{m+2}_{\alpha,\r}(I))$, then there is $C>0$ independent of $h$ and $\alpha$ such that
$$\norm{\u(\cdot,T)-\u_h(\cdot,T)}_{0,I}\le C h^{m+1}\left(|\u_0|_{m+1,I} + |\u(\cdot,T)|_{m+1,I}+T\max_{0\le t\le T}|\u(\cdot,t)|_{m+2,I}\right),\qquad T>0.$$

\end{theorem}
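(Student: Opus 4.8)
The plan is to run a standard semi-discrete DG error analysis built on two already-established ingredients: the discrete energy dissipation \eqref{eqn:negative_definite_form} and, crucially, the uniform-in-$\calpha$ optimal Radau estimate \eqref{eqn:Radau_global_rate}, keeping every constant independent of $h$ and $\alpha$. First I would record the \emph{consistency} of the scheme. Since $\u$ is smooth on each $I^{\pm}$, continuous across every mesh node (the interface $\alpha$ sits strictly inside $I_{k_0}$), and satisfies the PDE \eqref{eqn:acoustic_symmetric_frederichs}, the interface condition \eqref{eqn:acoustic_matrix_form:IC} and the dissipative boundary conditions \eqref{eqn:acoustic_matrix_form:BC}, an element-wise integration by parts shows the numerical fluxes reduce to the physical ones, so that $M(\u_t,\v_h)=B(\u,\v_h)$ for all $\v_h\in\WW^m_{\alpha,\r}(\T_h)$. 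Subtracting the scheme \eqref{eqn:general_IDG_form_compact:Weak_form} gives the Galerkin orthogonality $M((\u-\u_h)_t,\v_h)=B(\u-\u_h,\v_h)$.

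Next I would split the error as $\u-\u_h=\eta+\xi$ with $\eta=\u-\mathcal{R}\u$ and $\xi=\mathcal{R}\u-\u_h\in\WW^m_{\alpha,\r}(\T_h)$. The decisive point is that the global Radau projection was designed precisely so that $B(\eta,\v_h)=B(\u-\mathcal{R}\u,\v_h)=0$ by \eqref{eqn:Global_Radau_projection}. Since $\mathcal{R}$ is a fixed linear projection and $\u_t=-A\u_x$ obeys the same jump relations $R_k=A_+^{-k}A_-^k$ as $\u$ (a short computation gives $\u_t\in\HH^{m+1}_{\alpha,\r}(I)$), one has $\partial_t(\mathcal{R}\u)=\mathcal{R}\u_t$, and the orthogonality collapses to the error equation
$$M(\xi_t,\v_h)=B(\xi,\v_h)-M(\eta_t,\v_h),\qquad\forall\,\v_h\in\WW^m_{\alpha,\r}(\T_h).$$
Choosing $\v_h=\xi$, using the symmetry of $M$ so that $M(\xi_t,\xi)=\tfrac12\tfrac{d}{dt}M(\xi,\xi)$, the dissipation $B(\xi,\xi)\le 0$ from \eqref{eqn:negative_definite_form}, and Cauchy--Schwarz for the (symmetric positive definite) $M$-inner product, I obtain $\tfrac{d}{dt}\sqrt{M(\xi,\xi)}\le\sqrt{M(\eta_t,\eta_t)}$.

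Integrating in time and using the norm equivalence $\sqrt{M(\cdot,\cdot)}\simeq\norm{\cdot}_{0,I}$ (as $S$ is symmetric positive definite) gives $\norm{\xi(\cdot,T)}_{0,I}\le C\big(\norm{\xi(\cdot,0)}_{0,I}+\int_0^T\norm{\eta_t(\cdot,t)}_{0,I}\,dt\big)$. For the initial term, $\xi(\cdot,0)=\mathcal{R}\u_0-\u_h(\cdot,0)$ is controlled via the triangle inequality by the Radau estimate \eqref{eqn:Radau_global_rate} and the $L^2$-projection rate of \autoref{coro:L2_projection_rate} (recall $\u_h(\cdot,0)$ is the $L^2$ projection of $\u_0$), yielding $\norm{\xi(\cdot,0)}_{0,I}\le Ch^{m+1}|\u_0|_{m+1,I}$. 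For the time integral, $\eta_t=\u_t-\mathcal{R}\u_t$, so \eqref{eqn:Radau_global_rate} with $i=0$ together with $|\u_t|_{m+1,I}=|A\u_x|_{m+1,I}\le C|\u|_{m+2,I}$ (valid because $A$ is piecewise constant) gives $\int_0^T\norm{\eta_t}_{0,I}\,dt\le CTh^{m+1}\max_{0\le t\le T}|\u(\cdot,t)|_{m+2,I}$. Finally the triangle inequality $\norm{\u(\cdot,T)-\u_h(\cdot,T)}_{0,I}\le\norm{\eta(\cdot,T)}_{0,I}+\norm{\xi(\cdot,T)}_{0,I}$, with $\norm{\eta(\cdot,T)}_{0,I}\le Ch^{m+1}|\u(\cdot,T)|_{m+1,I}$, assembles the claimed bound.

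The steps demanding the most care are (i) the consistency identity $M(\u_t,\v_h)=B(\u,\v_h)$, where the flux/jump bookkeeping at the nodes and the use of the boundary conditions \eqref{eqn:acoustic_matrix_form:BC} must be carried out precisely, and (ii) ensuring every constant is independent of $\alpha$. The latter, which is the genuinely novel difficulty for IFE methods, is exactly what the uniform Radau estimate \eqref{eqn:Radau_global_rate} and the uniform inverse and Bramble--Hilbert machinery of \autoref{sec:properties_of_v} and \autoref{sec:Bramble_Hilbert} already supply; once these are in hand, the remaining energy/Gronwall argument is routine.
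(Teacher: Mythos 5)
Your proposal is correct and follows essentially the same route as the paper: the same splitting via the global Radau projection $\mathcal{R}$, the $B$-orthogonality \eqref{eqn:Global_Radau_projection}, the dissipation bound \eqref{eqn:negative_definite_form}, the uniform Radau rate \eqref{eqn:Radau_global_rate} applied to $\u_t=-A\u_x$, the $L^2$-projection bound for the initial data, and a Gronwall-type integration (your $\xi$ is the paper's $-\g$ and your $\eta$ its $\z$). The only difference is cosmetic: you state the consistency identity $M(\u_t,\v_h)=B(\u,\v_h)$ explicitly, which the paper leaves implicit when combining \eqref{eqn:eg_equation} with \eqref{eqn:general_IDG_form_compact}.
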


\begin{proof}
Our proof follows the usual methodology used for the non-interface problem (see \cite{cockburnIntroductionDiscontinuousGalerkin1998}). We first note that $\mathcal{R}\u_t=\frac{d}{dt}\mathcal{R}\u$ and split the error $\e=\u_h-\u$ as
$$\e=\z -\g, \qquad \z=\u-\mathcal{R}\u, \quad \g=\u_h-\mathcal{R}\u.
$$
It follows from the definition of $\mathcal{R}$ in \eqref{eqn:Global_Radau_projection} that

\begin{equation}B(\g,\g)= B(\g-\z,\g) =B(\u_h-\u,\g)=B(\e,\g).
\label{eqn:eg_equation}
\end{equation}
By combining \eqref{eqn:eg_equation}, \eqref{eqn:general_IDG_form_compact} and \eqref{eqn:negative_definite_form}, we get

\begin{align}
    \left(S\z_t(\cdot,t),\g(\cdot,t)\right)_{I}&=\left(S\g_t(\cdot,t),\g(\cdot,t)\right)_{I}-\left(S\e_t(\cdot,t),\g(\cdot,t)\right)_{I},\notag \\
    &=\frac{1}{2}\frac{d}{dt}\norm{\sqrt{S}\g(\cdot,t)}_{0,I}^2 -B(\e(\cdot,t),\g(\cdot,t)),\notag\\
    &=\frac{1}{2}\frac{d}{dt}\norm{\sqrt{S}\g(\cdot,t)}_{0,I}^2
    -B(\g(\cdot,t),\g(\cdot,t)),\notag\\
    &=\frac{1}{2}\frac{d}{dt}\norm{\sqrt{S}\g(\cdot,t)}_{0,I}^2+\sigma(t),\label{eqn:gq_inequality}
\end{align}
where  $\sigma(t)\ge 0$ by \eqref{eqn:negative_definite_form}. Let $\kappa(t)= \norm{\sqrt{S}\g(\cdot,t)}_{0,I}$, then by Cauchy-Schwarz inequality,
\begin{equation}\left(S\z_t(\cdot,t),\g(\cdot,t)\right)_{I}\le \norm{\z_t(\cdot,t)}_{0,I}\kappa(t) .\label{eqn:Young_ineq}\end{equation}
Following the ideas of the proof of \autoref{lem:commutative_diagram}, we can show that  $\u_t(\cdot,t)=-A\u_x(\cdot,t)\in\HH^{m+1}_{\alpha,\r}(I)$ since $\u(\cdot,t)\in\HH^{m+2}_{\alpha,\r}(I)$. Therefore, by \eqref{eqn:Radau_global_rate}, there is $C$ independent of $h$ and $\alpha$ such that
\begin{equation}
    \norm{\z_t(\cdot,t)}_{0,I}\le C h^{m+1}|\u(\cdot,t)|_{m+2,I}. \label{eqn:bound_q}
\end{equation}
Now, we use \eqref{eqn:bound_q} , \eqref{eqn:Young_ineq} and integrate  \eqref{eqn:gq_inequality} on $[0,T]$ to get
\begin{equation}
    \frac{1}{2}\kappa(T)^2-\frac{1}{2}\kappa(0)^2+\sigma(t)\le C h^{m+1}\int_0^T \kappa(s)|\u(\cdot,s)|_{m+2,I}\ ds,\label{eqn:Gronwall_form}
\end{equation}
Using a generalized version of Gronwall's inequality (see \cite[p. 24]{barbuDifferentialEquations2016}), we get the following bound on $\kappa(T)$

\begin{align}
    \kappa(T)&\le \kappa(0) +Ch^{m+1}\int_0^T |\u(\cdot,s)|_{m+2,I} \ ds,\\
    &\le \kappa(0) +Ch^{m+1}T\max_{0\le t\le T}|\u(\cdot,t)|_{m+2,I}. \label{eqn:Gronwall}
\end{align}
We also have
\begin{equation}
    \kappa(0)=\norm{\sqrt{S}\left(\u_h(\cdot,0)-\Ru_0\right)}_{0,I}\le \norm{\sqrt{S}\left(\u_h(\cdot,0)-\u_0\right)}_{0,I}+\norm{\sqrt{S}\left(\u_0-\Ru_0\right)}_{0,I}\le Ch^{m+1}|\u_0|_{m+1,I}.
    \label{eqn:h0_bound}
\end{equation}
We substitute \eqref{eqn:h0_bound} into \eqref{eqn:Gronwall}, to obtain

$$\kappa(T)= \norm{\sqrt{S}\g(\cdot,T)}_{0,I}\le Ch^{m+1}\left(|\u_0|_{m+1,I}+T\max_{0\le t\le T}|\u(\cdot,t)|_{m+2,I}\right).$$
To finalize the proof, we use the triangle inequality

$$\norm{\e(\cdot,T)}_{0,I}\le \norm{\z(\cdot,T)}_{0,I}+\norm{\g(\cdot,T)}_{0,I}\le Ch^{m+1}\left(|\u_0|_{m+1,I} + |\u(\cdot,T)|_{m+1,I}+T\max_{0\le t\le T}|\u(\cdot,t)|_{m+2,I}\right).$$
\end{proof}

 \section{Novel proofs for results already established in the literature}
 \label{sec:Already_established_results}

In this section, for demonstrating the versatility of the immersed scaling argument established in  \autoref{sec:properties_of_v} and \autoref{sec:Bramble_Hilbert}, we redo the error estimation for two IFE methods in the literature. One of them is the IFE space for an elliptic interface problem
\cite{adjeridPthDegreeImmersed2009}, and the other one is the IFE space for an interface problem of the Euler-Bernoulli Beam
\cite{linErrorAnalysisImmersed2017}. We note that the approximation capability for these IFE spaces were already analyzed, but with complex and lengthy procedures. Our discussions here is to demonstrate that similar error bounds for the optimal approximation capability of these different types of IFE spaces
can be readily derived by the unified immersed scaling argument.

\commentout{
    we use the results of \autoref{sec:properties_of_v} and \autoref{sec:Bramble_Hilbert} to re-derive some error estimate that exist already in the literature. We are mainly interested in the elliptic interface problem \cite{adjeridPthDegreeImmersed2009} $$-\beta u''=f,\qquad \beta \text{ is piecewise constant}.$$
    As well as the Euler-Bernoulli Beam interface problem \cite{linErrorAnalysisImmersed2017}
    $$\beta u^{(4)}=f,\qquad \beta \text{ is piecewise constant}.$$

Our goal here is to provide a unified framework to proving optimal convergence estimates. Although some estimates exist in the literature, the proofs usually rely on multi-point Taylor expansions and the evaluation of nested integrals. Our approach, on the other hand,  is shorter, simpler and has potential to be applied to higher dimensions.

In \autoref{subsec:continuous_FE}, we analyze the IFE method for the elliptic interface problem and prove optimal error estimates for all degrees $m$. In \autoref{subsec:euler_bernouli}, following the authors of \cite{linErrorAnalysisImmersed2017}, we restrict our attention only to the lowest order (cubic) conforming IFE method for the Euler-Bernoulli Beam interface problem. We believe that our approach could be applied smoothly to higher degrees.
}

 \subsection{The $m$-th degree IFE space for an elliptic interface problem}\label{subsec:continuous_FE}
In this subsection, we consider the $m$-th degree IFE space developed in \cite{adjeridPthDegreeImmersed2009} for solving the following interface problem:
\begin{equation}
\begin{cases}-\beta(x)u''(x)=f(x),\ x\in(a,\alpha)\cup(\alpha,b) \\
u(a)=u(b)=0,
\end{cases}\qquad \beta(x)=\begin{cases}\beta^->0,& x\in (a,\alpha),\\
\beta^+>0,& x\in (\alpha,b),\end{cases}\qquad [u]_{\alpha}=[\beta u']_{\alpha}=0.\label{eqn:elliptic_problem_statement}
\end{equation}
Assume that $f$ is in $C^{m-1}(I)$ which implies that the solution $u\in \H^{m+1}_{\alpha,r}(I)$ with
\begin{equation}
r_0=1,\quad \text{ and }\quad   r_i=\frac{\beta^-}{\beta^+}\ \text{ for }\ i=1,2,\dots,m.\label{eqn:r_beta_connection}
\end{equation}
The discussion  in \autoref{sec:notation} suggests the following IFE space for this elliptic interface problem:
\begin{eqnarray}
Z_{\alpha,r}^m(\mathcal{T}_h)= H^{1}_0(I)\cap W_{\alpha,r}^m(\mathcal{T}_h) \label{eq:IFE_Space_elliptic}
\end{eqnarray}
which coincides with the one developed in \cite{adjeridPthDegreeImmersed2009} based on the extended jump conditions where
it was proved, by an elementary but complicated multi-point Taylor expansion technique, to have the optimal approximation capability with respect to $m$-th degree polynomials employed in this IFE space. We now reanalyze this IFE space by the
immersed scaling argument.

\commentout{
        we will show that the results of \autoref{sec:properties_of_v} and \autoref{sec:Bramble_Hilbert} still hold for the continuous subspace:
        $$Z_{\alpha,r}^m(\mathcal{T}_h)= H^{1}_0(I)\cap W_{\alpha,r}^m(\mathcal{T}_h),\qquad \text{assuming that } r_0=1,$$
        where $C^{0}(I)$ is the space of scalar continuous functions on $I$ and $W_{\alpha,r}^m(\mathcal{T}_h)$ is defined in \eqref{eqn:global_IFE_space}. The assumption $r_0=1$ is essential to have a non-empty intersection. The space $Z_{\alpha,r}^m(\mathcal{T}_h)$ appears in \cite{adjeridPthDegreeImmersed2009} as a conforming IFE space for the second order elliptic problem with discontinuous coefficients:

        The first immersed finite element method was developed by Li \cite{liImmersedInterfaceMethod1998} to solve \eqref{eqn:elliptic_problem_statement} with piecewise linear basis functions and was extended to higher order basis functions in \cite{adjeridPthDegreeImmersed2009}. Although the problem \eqref{eqn:elliptic_problem_statement} is one-dimensional,  the properties of the IFE approximation are still investigated, see for instance \cite{caoSuperconvergenceImmersedFinite2017} for an analysis of the superconvergence properties of the IFE method. In this section, we will provide a new and simple approach for the error analysis of the IFE method based on the scaling argument and the immersed Bramble-Hilbert lemma detailed in \autoref{sec:Bramble_Hilbert} instead of the multi-point Taylor expansion used in \cite{adjeridPthDegreeImmersed2009}. In the rest of this paper, we assume that the right hand side $f$ is in $C^{m-1}(I)$ which implies that the solution $u\in \H^{m}_{\alpha,r}(I)$, where \begin{equation} r_0=1,\quad \text{ and }\quad   r_i=\frac{\beta^-}{\beta^+}\ \text{ for }\ i=1,2,\dots,m.\label{eqn:r_beta_connection}\end{equation}
        However, the results that we obtain here hold for  all sequences $\{r_i\}_{i=0}^m$ with $r_0=1$. Since the conditions of the Aubin-Nitsche trick are satisfied for the problem \eqref{eqn:elliptic_problem_statement}, it is enough to study the approximation capabilities of $Z^m_{\alpha,r}(\T_h)$ as a finite dimensional of $\H^{m}_{\alpha,r}(I)$. Moreover, we are only interested in Lobatto projections $P_h:\H^{m}_{\alpha,r}(\Ik)\to \V^{m}_{\alpha,r}(\Ik)$ that satisfy $P_hu(x_{k_0-1})=u(x_{k_0-1})$ and $P_hu(x_{k_0})=u(x_{k_0})$. The interpolation operator is an example of a such projection, we have shown that the interpolation exists in \autoref{thm:Lagrange basis}.
}
The continuity of functions in the IFE space suggests to consider the following immersed Lobatto projection  $\L^m_{\alpha,r}:\H^{m+1}_{\alpha,r}(\Ik)\to\V^m_{\alpha,r}(\Ik)$  defined by
\begin{equation}
    \begin{cases}
    \L^m_{\alpha,r} u(x_{k_0-1})=u(x_{k_0-1}),\\
    \L^m_{\alpha,r} u(x_{k_0})=u(x_{k_0}),\\
    \left( \L^m_{\alpha,r} u,v_h\right)_{w,\Ik}=\left( u,v_h\right)_{w,\Ik}, \quad \forall v_h\in \V^{m-2}_{\alpha,\tau^2(r)}(\Ik),
    \end{cases}\qquad w(x)=\begin{cases}
    r_1,& x\in\Ik^-,\\
    1, &x\in \Ik^+,
    \end{cases}
    \label{eqn:Lobatto_projection}
\end{equation}
where $\tau^2=\tau\circ\tau$ and $\tau$ is the  shift operator defined in \eqref{eqn:first_shift_operator}. The related reference immersed Lobatto projection $\cL:\H^{m+1}_{\calpha,r}(\cI)\to \cV$ is defined by the diagram \eqref{eqn:phys_ref_diagram}, that is, $\cL\check{u}=\L^m_{\alpha,r} u$ where $\check{u}=\mathcal{M}v$.



 For simplicity, let $\ucl=\cL \check{u}$ for a given $\check{u}\in \cV$ and note that the system \eqref{eqn:Lobatto_projection} is a square system of $m+1$ equations since the last line can be written as $m-1$ equations.
 Therefore, we only need to show that if $\check{u}\equiv 0$ then $\ucl\equiv 0$ to prove that $\cL$ is well defined.

\begin{lemma}
\label{lem:well_posdeness_of_lobatto}
The reference immersed Lobatto projection $\cL$ is well defined.
\end{lemma}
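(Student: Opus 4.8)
The plan is to lean on the observation recorded just before the lemma: transported to $\cI$ via \eqref{eqn:phys_ref_diagram}, the defining system \eqref{eqn:Lobatto_projection} is a square linear system of $m+1$ equations in the $m+1$ unknown coordinates of $\ucl=\cL\cu\in\cV$ (the two endpoint conditions together with the orthogonality conditions obtained by testing against a basis of $\V^{m-2}_{\calpha,\tau^2(r)}(\cI)$, whose dimension is $m-1$ by \autoref{lem:Dimension and uniqueness of extensions}). Hence well-posedness reduces to injectivity, and it suffices to show that $\cu\equiv 0$ forces $\ucl\equiv 0$. So I would assume $\ucl\in\cV$ satisfies $\ucl(0)=\ucl(1)=0$ and $(\ucl,v_h)_{w,\cI}=0$ for every $v_h\in\V^{m-2}_{\calpha,\tau^2(r)}(\cI)$, where $w=r_1\mathbbm{1}_{\cI^-}+\mathbbm{1}_{\cI^+}>0$, and argue by contradiction that $\ucl\not\equiv 0$ is impossible.

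The argument splits on $j$, the number of distinct interior roots of odd multiplicity of $\ucl$ in $(0,1)$. If $j\ge m-1$, then counting these together with the two boundary roots at $0$ and $1$ gives $\ucl$ at least $j+2\ge m+1$ roots (with multiplicity), contradicting \autoref{thm:Zeros of IFE basis functions} since $\ucl$ is a nonzero element of $\cV$. If instead $j\le m-2$, I would mimic the construction in the proofs of \autoref{thm:Lagrange basis} and \autoref{thm:Zeros of orthogonal IFE functions}: solving a homogeneous system of $j$ conditions on the $(j+1)$-dimensional space $\V^{j}_{\calpha,\tau^2(r)}(\cI)$ produces a nonzero $\cpsi\in\V^{j}_{\calpha,\tau^2(r)}(\cI)$ vanishing at the $j$ odd-multiplicity roots of $\ucl$; by \autoref{thm:Zeros of IFE basis functions} these are all of its roots and are simple, so $\cpsi$ changes sign exactly at them. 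Since $j\le m-2$, the nesting $\V^{j}_{\calpha,\tau^2(r)}(\cI)\subseteq\V^{m-2}_{\calpha,\tau^2(r)}(\cI)$ makes $\cpsi$ an admissible test function. The product $w\,\ucl\,\cpsi$ then changes sign nowhere on $\cI$ — at each odd-multiplicity root both factors flip sign, at even-multiplicity roots of $\ucl$ neither does, and across $\calpha$ the one-sided limits keep their sign because $r_0$ and the entries of $\tau^2(r)$ are positive — so $(\ucl,\cpsi)_{w,\cI}\ne 0$, contradicting the orthogonality. In either case $\ucl\equiv 0$, so $\cL$ is well defined.

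The step I expect to be the main obstacle is making the sign-definiteness of $w\,\ucl\,\cpsi$ rigorous across the interface, because $\ucl$ and $\cpsi$ live in IFE spaces with different jump sequences ($r$ versus $\tau^2(r)$) and may both be discontinuous at $\calpha$. The point to stress is that positivity of all jump ratios prevents either factor from flipping sign as $x$ crosses $\calpha$ (unless $\calpha$ is itself a listed odd-multiplicity root, in which case both flip together), exactly as in the proof of \autoref{thm:Zeros of orthogonal IFE functions}; the mismatch in jump sequences is therefore immaterial to the sign count. A minor bookkeeping point is the small-$m$ cases: for $m=1$ the orthogonality condition is vacuous, and the claim follows immediately from the root count since $j\ge 0=m-1$ always holds, a situation already subsumed by the dichotomy above.
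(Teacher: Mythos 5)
Your proof is correct, but it takes a genuinely different route from the paper's. The paper's own argument is a short energy computation: since $\ucl''\in \V^{m-2}_{\calpha,\tau^2(r)}(\cI)$ by the shift identity \eqref{eqn:first_shift_operator}, it tests the orthogonality condition with $v_h=\ucl''$ and integrates by parts on $\cI^-$ and $\cI^+$ separately; the interface term $\ucl(\calpha)\left[r_1\ucl'(\calpha^-)-\ucl'(\calpha^+)\right]$ vanishes precisely because the elliptic sequence has $r_0=1$ (so $\ucl$ is continuous at $\calpha$) and the weight ($w=r_1$ on $\cI^-$, $w=1$ on $\cI^+$) is matched to the jump condition $\ucl'(\calpha^+)=r_1\ucl'(\calpha^-)$, leaving $\int_0^1 w(x)[\ucl'(x)]^2\,dx=0$, whence $\ucl'\equiv 0$ and the endpoint conditions force $\ucl\equiv 0$. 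You instead run the root-counting and sign-change machinery of \autoref{thm:Zeros of IFE basis functions} and \autoref{thm:Zeros of orthogonal IFE functions}: the dichotomy on $j$, the construction of the auxiliary $\cpsi\in\V^{j}_{\calpha,\tau^2(r)}(\cI)$ with exactly $j$ simple roots, the nesting $\V^{j}_{\calpha,\tau^2(r)}(\cI)\subseteq\V^{m-2}_{\calpha,\tau^2(r)}(\cI)$, and the observation that positivity of all jump ratios rules out sign flips across $\calpha$ are all sound, including your handling of the mismatched sequences $r$ versus $\tau^2(r)$ and of the small-$m$ cases. What your route buys is generality: it uses only positivity of $w$ and of the $r_k$, so injectivity holds for an arbitrary positive jump sequence and an arbitrary positive piecewise-constant weight, with no need for $r_0=1$ or the compatibility of $w$ with $r_1$. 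What the paper's route buys is brevity and reusability: the same test-function trick ($v_h=q_2''$) and the resulting coercivity-type identity are exactly what drive the quantitative stability bound in \autoref{lem:Lobatto_is_uniformly_bounded}, an estimate your qualitative contradiction argument does not produce, so the paper's choice also streamlines the step that follows.
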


\begin{proof}
Let $\check{u}\equiv 0$, we will show that $\ucl=\cL \check{u}\equiv 0$. We have
$$\ucl(0)=\ucl(1)=0,\qquad \left( \ucl,v_h\right)_{\cw,\cI}=0,\ \forall v\in \V^{m-2}_{\calpha,\tau^2(r)}(\cI), $$
where $\check{w}=\mathcal{M}w$.
Using \eqref{eqn:first_shift_operator},  $\ucl''\in \V^{m-2}_{\calpha,\tau^2(r)}(\cI)$, then

\begin{align*}
0=\int_0^1 w(x)\ucl(x)\ucl''(x)\ dx&=
r_1\int_0^{\calpha} \ucl(x)\ucl''(x)\ dx+\int_{\calpha}^1 \ucl(x)\ucl''(x)\ dx
\\
&=\ucl(\calpha)\left[r_1\ucl'(\calpha^-)-\ucl'(\calpha^+)\right] -\int_0^1 w(x)[\ucl'(x)]^2\ dx\\
0&=-\int_0^1 w(x)[\ucl'(x)]^2\ dx,
\end{align*}
which implies that $\ucl$ is zero since $\ucl(0)=\ucl(1)=0$.
\end{proof}

Next, we will show that $\{\cL\}_{0\le \calpha<1}$ is a uniformly bounded collection of RIFE projections in the following lemma.

\begin{lemma}\label{lem:Lobatto_is_uniformly_bounded}
There is a constant $C(\beta^+,\beta^-,m)>0$ independent of $\calpha$ such that the following estimate holds for
every $\check{u}\in \H^{m+1}_{\calpha,r}(\cI)$
$$\norm{\ucl}_{0,\cI} \le C(\beta^+,\beta^-,m)\norm{\check{u}}_{1,\cI}.$$
\end{lemma}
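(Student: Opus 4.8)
The plan is to first establish the $H^1$-seminorm bound $\abs{\ucl}_{1,\cI}\le C(r_1)\abs{\check{u}}_{1,\cI}$ through an energy identity, and then upgrade it to the desired $L^2$ estimate by a Poincar\'e/trace argument. Throughout I write $\ucl=\cL\check{u}$ and recall that in the elliptic setting $r_0=1$, so every function in $\H^{m+1}_{\calpha,r}(\cI)$ and every function in $\cV$ is continuous across $\calpha$; in particular $\ucl\in H^1(\cI)$ globally. This global continuity is exactly what will let me pass from a bound on $\ucl'$ to a bound on $\ucl$.

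For the seminorm bound, the key observation is that $\ucl''$ is itself an admissible test function for the interior moment conditions in \eqref{eqn:Lobatto_projection}. Indeed, applying the commutation identity \eqref{eqn:DiffOp_EOp_exchange} twice shows that differentiation maps $\cV$ into $\V^{m-1}_{\calpha,\tau(r)}(\cI)$ and then into $\V^{m-2}_{\calpha,\tau^2(r)}(\cI)$, so that $\ucl''\in\V^{m-2}_{\calpha,\tau^2(r)}(\cI)$. Consequently the orthogonality condition gives $(\ucl,\ucl'')_{\cw,\cI}=(\check{u},\ucl'')_{\cw,\cI}$. I then integrate by parts twice on each of $\cI^-$ and $\cI^+$. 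The two interface contributions at $\calpha$ cancel: continuity of $\ucl$ (and of $\check{u}$) together with the derivative jump $\ucl'(\calpha^+)=r_1\ucl'(\calpha^-)$ and the weight value $\cw=r_1$ on $\cI^-$ make the bracket $r_1\ucl(\calpha)\ucl'(\calpha^-)-\ucl(\calpha)\ucl'(\calpha^+)$ vanish, exactly as in the proof of \autoref{lem:well_posdeness_of_lobatto}. The endpoint contributions at $0$ and $1$ survive on both sides, but after using the nodal conditions $\ucl(0)=\check{u}(0)$ and $\ucl(1)=\check{u}(1)$ the terms involving $\ucl'(0)$ and $\ucl'(1)$ cancel between the two sides of $(\ucl,\ucl'')_{\cw,\cI}=(\check{u},\ucl'')_{\cw,\cI}$, leaving the clean identity $(\ucl',\ucl')_{\cw,\cI}=(\check{u}',\ucl')_{\cw,\cI}$. (This bookkeeping holds uniformly for every $m\ge 1$; when $m=1$ the interior conditions are vacuous and $\ucl''\equiv 0$, yet the same integration by parts still produces the identity.) Cauchy--Schwarz then yields $(\ucl',\ucl')_{\cw,\cI}\le(\check{u}',\check{u}')_{\cw,\cI}$, and since $\cw$ is bounded above and below by positive constants depending only on $r_1=\beta^-/\beta^+$, this gives $\abs{\ucl}_{1,\cI}\le C(r_1)\abs{\check{u}}_{1,\cI}$.

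Finally, to recover the $L^2$ bound I use that $\ucl\in H^1(\cI)$ with $\ucl(0)=\check{u}(0)$, so $\ucl(x)=\check{u}(0)+\int_0^x\ucl'(t)\,dt$ for all $x\in\cI$; bounding the integral by $\abs{\ucl}_{1,\cI}$ via Cauchy--Schwarz on $[0,1]$ and the point value by the one-dimensional trace inequality $\abs{\check{u}(0)}\le C\norm{\check{u}}_{1,\cI}$ gives $\norm{\ucl}_{0,\cI}\le C(\beta^+,\beta^-,m)\norm{\check{u}}_{1,\cI}$, as claimed. The step I expect to be the most delicate is the vanishing of the interface terms in the double integration by parts: it is precisely here that the structural hypotheses $r_0=1$ and $\ucl'(\calpha^+)=r_1\ucl'(\calpha^-)$, matched against the weight $\cw$, are indispensable, for without them the interface bracket would not cancel and the estimate would acquire $\calpha$-dependent constants. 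The endpoint cancellation through the nodal conditions is the second point requiring care, since it is what removes the otherwise uncontrolled boundary terms $\ucl'(0)$ and $\ucl'(1)$.
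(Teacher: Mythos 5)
Your proof is correct, but it follows a genuinely different route from the paper's. The paper splits $\ucl=q_1+q_2$, where $q_1\in \V^{1}_{\calpha,r}(\cI)$ matches the endpoint values (its $\calpha$-uniform bound is quoted from \cite{liImmersedInterfaceMethod1998} rather than proved) and $q_2=\cL(\check{u}-q_1)$ has homogeneous nodal data; it then tests with $q_2''$, controls $\norm{q_2''}_{0,\cI}$ by $\norm{q_2'}_{0,\cI}$ via the inverse inequality of \autoref{lemma:inverse_inequality}, and finishes with Poincar\'e. You instead test directly with $\ucl''$ (admissible, since $\ucl''\in \V^{m-2}_{\calpha,\tau^2(r)}(\cI)$ by \eqref{eqn:DiffOp_EOp_exchange}, exactly as in \autoref{lem:well_posdeness_of_lobatto}) and observe that after integrating by parts on each subinterval the interface bracket cancels through $r_0=1$, $\ucl'(\calpha^+)=r_1\ucl'(\calpha^-)$ and $\cw=r_1$ on $\cI^-$, while the endpoint terms $\ucl(1)\ucl'(1)-r_1\ucl(0)\ucl'(0)$ and $\check{u}(1)\ucl'(1)-r_1\check{u}(0)\ucl'(0)$ cancel between the two sides thanks to the nodal conditions. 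This yields the identity $(\ucl',\ucl')_{\cw,\cI}=(\check{u}',\ucl')_{\cw,\cI}$ --- in effect, $\ucl'$ is the $\cw$-weighted $L^2$ projection of $\check{u}'$ --- hence $\abs{\ucl}_{1,\cI}\le C(r_1)\abs{\check{u}}_{1,\cI}$ with a constant independent of $m$ and $\calpha$, and your $L^2$ recovery is sound because $r_0=1$ makes both $\ucl$ and $\check{u}$ globally $H^1(\cI)$, so the trace constant at $x=0$ is taken on the fixed unit interval and is $\calpha$-independent. Your argument buys more with less: no auxiliary interpolant $q_1$, no inverse inequality, and an $H^1$-seminorm stability as a by-product; the paper's decomposition is in turn more modular, since reducing to homogeneous nodal data makes the test-with-$q_2''$ trick robust even in settings where the boundary terms would not cancel, and it reuses machinery already established in the framework. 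One cosmetic slip: you say you integrate by parts ``twice'' on each subinterval, but the brackets you display correspond to a single integration by parts applied to each side of the orthogonality identity; the computation itself is right, including the degenerate case $m=1$ where $\ucl''\equiv 0$ and the identity still follows.
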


\begin{proof} We  write $\ucl$ as  $\ucl=q_1+q_2$, where $q_1\in \V^{1}_{\calpha,r}(\cI)$  such that $$q_1(0)=\check{u}(0),\quad q_1(1)=\check{u}(1),$$
and $q_2=\cL(\check{u}-q_1)\in \V^m_{\calpha,r}(\cI)$. The construction of $q_1$ is straightforward (see \cite{liImmersedInterfaceMethod1998}) and we have $\norm{q_1}_{0,\cI}\le C(\beta^+,\beta^-)\norm{u}_{1,\cI}$. Now, the second term $q_2$ satisfies
$$q_2(0)=q_2(1)=0,\qquad \left( q_2,v_h\right)_{\cw,\cI}=\left(\check{u}-q_1,v_h\right)_{\cw,\cI},\ \forall v_h\in \V^{m-2}_{\calpha,\tau^2(r)}(\cI), $$
where $\check{w}=\mathcal{M}w$.
Following the proof of \autoref{lem:well_posdeness_of_lobatto}, we can choose $v_h=q_2''$ and integrate by parts to get
$$-\norm{wq_2'}_{0,\cI}^2 = \left(\check{u}-q_1,q_2''\right)_{\cw,\cI}.$$
We take the absolute value of each side and apply Cauchy-Schwarz inequality
$$\norm{q_2'}_{0,\cI}^2 \le  C(\beta^+,\beta^-,m)\norm{\tilde{u}-q_1}_{0,\cI}\norm{q_2''}_{0,\cI}.
$$
The inverse inequality in \autoref{lemma:inverse_inequality} implies that $\norm{q_2''}_{0,\cI}\le C(\beta^+,\beta^-,m)\norm{q_2'}_{0,\cI}$. Hence,
\begin{equation}\norm{q_2'}_{0,\cI}\le  C(\beta^+,\beta^-,m)\norm{\check{u}-q_1}_{0,\cI}\le C(\beta^+,\beta^-,m)\norm{\check{u}}_{1,\cI}.\label{eqn:lobatto_q2prime}\end{equation}
Since $q_2(0)=q_2(1)=0$, we can apply Poincaré's inequality to obtain $\norm{q_2}_{0,\cI}<C \norm{q_2'}_{0,\cI}$. Finally, we have

$$\norm{\tilde{u}}_{0,\cI}\le \norm{q_1}_{0,\cI}+\norm{q_2}_{0,\cI}\le C(m,\beta^+,\beta^-)\norm{\check{u}}_{1,\cI}.$$
\end{proof}
Then, we can use \autoref{thm:general_theorem_LIFE} to derive an error bound for the Lobatto projection
$\L^m_{\alpha,r} u$ in the following theorem which confirms the optimal approximation capability of the
IFE space established in \cite{adjeridPthDegreeImmersed2009} by a more complex analysis.

\begin{theorem}\label{thm:H1_local_approximation}
There is $C(\beta^+,\beta^-,m)>0$ such that the following estimate holds for every ${u}\in \H^{m+1}_{\alpha,r}(\Ik)$
$$|u-\L^m_{\alpha,r} u|_{i,\Ik}\le C(\beta^+,\beta^-,m)h^{m+1-i}|u|_{m+1,\Ik},\quad \forall i=0,1,\dots,m.
$$
\end{theorem}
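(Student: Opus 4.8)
The plan is to recognize that this theorem is an immediate consequence of the general scaling-argument machinery, namely \autoref{thm:general_theorem_LIFE}, once I verify that $\{\L^m_{\alpha,r}\}_{\alpha\in\overset{\circ}{I}_{k_0}}$ is a uniformly bounded collection of LIFE projections in the sense of \autoref{def:uniform_boundedness}. By the construction via the commutative diagram \eqref{eqn:phys_ref_diagram}, this reduces to checking two properties of the reference operator $\cL$: that it is a projection onto $\cV$ satisfying \eqref{eqn:RIFE_preserving}, and that it obeys the $\calpha$-independent bound \eqref{eqn:def_uniformly_bounded}.

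First I would establish the projection property $\cL\cp=\cp$ for every $\cp\in\cV$. By \autoref{lem:well_posdeness_of_lobatto}, $\cL\cu$ is the \emph{unique} element of $\cV$ satisfying the three defining conditions: the two endpoint interpolations at $0$ and $1$ together with the weighted orthogonality against $\V^{m-2}_{\calpha,\tau^2(r)}(\cI)$. Given $\cp\in\cV$, the function $\cp$ itself trivially satisfies these three conditions with $\cu$ replaced by $\cp$, so uniqueness forces $\cL\cp=\cp$, which is exactly \eqref{eqn:RIFE_preserving}.

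Next I would invoke \autoref{lem:Lobatto_is_uniformly_bounded}, which supplies a constant $C(\beta^+,\beta^-,m)$ independent of $\calpha$ with $\norm{\cL\cu}_{0,\cI}\le C(\beta^+,\beta^-,m)\norm{\cu}_{1,\cI}$ for all $\cu\in\H^{m+1}_{\calpha,r}(\cI)$. This is precisely the uniform boundedness estimate \eqref{eqn:def_uniformly_bounded} with the admissible index $j=1$ (note $1\le m+1$). Combined with the projection property just verified, this shows that $\{\cL\}_{0<\calpha<1}$ is a uniformly bounded collection of RIFE projections, and hence that $\{\L^m_{\alpha,r}\}_{\alpha\in\overset{\circ}{I}_{k_0}}$ is a uniformly bounded collection of LIFE projections.

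Finally, applying \autoref{thm:general_theorem_LIFE} with $\Pr$ taken to be $\L^m_{\alpha,r}$ yields $|u-\L^m_{\alpha,r}u|_{i,\Ik}\le Ch^{m+1-i}|u|_{m+1,\Ik}$ for $i=0,1,\dots,m$, with the constant inheriting the dependence on $\beta^+,\beta^-,m$ from \autoref{lem:Lobatto_is_uniformly_bounded}. I anticipate essentially no obstacle at this stage: all of the analytic difficulty was already absorbed into \autoref{lem:Lobatto_is_uniformly_bounded} — where the $\calpha$-uniform bound is obtained from the $H^1_0$ structure, integration by parts, Poincar\'e's inequality, and the $\calpha$-independent inverse inequality of \autoref{lemma:inverse_inequality} — and into the immersed Bramble--Hilbert estimate underlying \autoref{thm:general_theorem_LIFE}. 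The only point demanding care is the otherwise trivial verification that $\cL$ fixes $\cV$, since the hypotheses of \autoref{thm:general_theorem_LIFE} and \autoref{def:uniform_boundedness} explicitly require the projection property.
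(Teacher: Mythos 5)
Your proposal is correct and follows exactly the paper's route: the paper's own proof is a one-line appeal to \autoref{lem:Lobatto_is_uniformly_bounded} and \autoref{thm:general_theorem_LIFE}. Your explicit verification that $\cL$ fixes $\cV$ (via the uniqueness in \autoref{lem:well_posdeness_of_lobatto}) is a detail the paper leaves implicit, and your check is sound.
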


\begin{proof}
    This follows immediately from \autoref{lem:Lobatto_is_uniformly_bounded} and \autoref{thm:general_theorem_LIFE}.
\end{proof}

\commentout{
        Finally, we can apply Céa's lemma and the Aubin-Nitsche trick to obtain an $L^2$ estimate on the error of the IFE solution:

        \begin{theorem}\label{thm:IFE_elliptic_error}
        Let $u\in \H^m_{\alpha,r}(I)$ the  solution to \eqref{eqn:elliptic_problem_statement} and let $u_h\in Z^m_{\alpha,r}(I)$ be the solution to the discrete problem
        $$\left(\beta u_h',v_h'\right)_{I}= (f,v_h)_{I},\qquad \forall v_h\in Z^m_{\alpha,r}(I),$$
        then
        $$\norm{u-u_h}_{0,\cI}\le Ch^{m+1}|u|_{m+1,I},$$
        where $C$ is independent of $u$, $h$ and $\alpha$.
        \end{theorem}

        The result of \autoref{coro:H1_local_approximation} can be applied to the heat equation with discontinuous coefficients:

        \begin{equation}\begin{cases}u_t(x,t)=\beta(x)u_{xx}(x,t),\ x\in(a,b) \\
        u(a)=u(b)=0,\\
        u(x,0)=u_0(x),
        \end{cases}\qquad \beta(x)=\begin{cases}\beta^->0,& x\in (a,\alpha),\\
        \beta^+>0,& x\in (\alpha,b),\end{cases}\qquad [u]_{\alpha}=[\beta u_{x}]_{\alpha}=0.\label{eqn:parabolic_problem_statement}\end{equation}
        We first notice that by differentiating the interface conditions with respect to $t$, we obtain the following extended jump conditions:
        $$\frac{\partial^ku}{\partial x^k}(\alpha^+,t)= r_k\frac{\partial^ku}{\partial x^k}(\alpha^-,t),\qquad r_k=\left(\frac{\beta^-}{\beta^+}\right)^{\left\lceil \frac{k}{2}\right\rceil}, $$
        where $\lceil\cdot\rceil$ is the ceiling function.
}

\subsection{Euler-Bernoulli Beam interface problem} \label{subsec:euler_bernouli}

In this subsection, we apply the immersed scaling argument to reanalyze the cubic IFE space developed in
\cite{linImmersedFiniteElement2011} and \cite{wangHermiteCubicImmersed2005} for solving the following interface problem
of the Euler-Bernoulli beam equation:
\begin{equation}
\begin{cases}\beta(x)u^{(4)}(x)=f(x),\ x\in(a,\alpha)\cup(\alpha,b) \\
u(a)=u(b)=0,\\
u'(a)=u'(b)=0
\end{cases}\quad \beta(x)=\begin{cases}\beta^->0,& x\in (a,\alpha),\\
\beta^+>0,& x\in (\alpha,b),\end{cases}
\label{eqn:euler_bernoulli_problem_statement}
\end{equation}
where the solution $u$ satisfies the following jump conditions at $\alpha$
$$[u]_{\alpha}=[u']_{\alpha}=[\beta u'']_{\alpha}=[\beta u''']_{\alpha}=0.$$
First, let $r=\left(1,1,\frac{\beta^-}{\beta^+},\frac{\beta^-}{\beta^+}\right)$ be fixed throughout this subsection. Then, the
usual weak form of \eqref{eqn:euler_bernoulli_problem_statement} suggests to consider the following IFE method:
\begin{equation}\text{find } u_h\in Q^3_{\alpha,r}(I)\ \text{ such that }\ \left(\beta u_h'',v_h''\right)_{I}=(f,v_h)_{I},\ \forall v_h\in Q^3_{\alpha,r}(\T_h),
\label{eqn:Euler_bernoulli_discrete}
\end{equation}
where $Q^3_{\alpha,r}(\T_h)=H^{2}_0(I)\cap W^3_{\alpha,r}(\T_h)$. We note that the IFE space $Q^3_{\alpha,r}(\T_h)$ as well as the method described by \eqref{eqn:Euler_bernoulli_discrete} were discussed in \cite{linImmersedFiniteElement2011} and \cite{wangHermiteCubicImmersed2005}, and an error analysis based on a multipoint Taylor expansion was carried out to establish the optimality of this IFE method in \cite{linErrorAnalysisImmersed2017}. As another demonstration of the versatility of the immerse scaling argument, we now present an alternative analysis for the optimal approximation capability of this IFE space. This new analysis based on the framework developed in \autoref{sec:properties_of_v} and \autoref{sec:Bramble_Hilbert} is shorter and cleaner than the one in the literature.

\commentout{
    In , cubic Hermite IFE spaces were developed to solve \eqref{eqn:euler_bernoulli_problem_statement} and optimal convergence rate was reported. This was later proved in \cite{linErrorAnalysisImmersed2017}. We will provide a shorter proof here using \autoref{sec:properties_of_v} and \autoref{sec:Bramble_Hilbert}.
}

As usual, for the discussion of the approximation capability of the IFE space, we consider the interpolation on the reference element $\cI$ and map it to the physical element $\Ik$. To define the interpolation, we let $\{\sigma_i\}_{i=1}^4$ be the Hermite degrees of freedom, that is,
\begin{eqnarray*}
\sigma_0(v)=v(0),\quad \sigma_1(v)=v(1),\quad \sigma_2(v)=v'(0),\quad \sigma_3(v)=v'(1),\qquad \forall v\in H^2(\cI).
\end{eqnarray*}
It is known \cite{linImmersedFiniteElement2011,wangHermiteCubicImmersed2005} that there is a basis $\{L^i_{\calpha,r}\}_{i=0}^3$ of $\V^3_{\calpha,r}(\cI)$ that satisfies
\begin{equation}
\sigma_i( L^j_{\calpha,r})=\delta_{i,j},\qquad i,j=0,1,2,3. \label{eqn:Hermite_basis}
\end{equation}
These basis functions can then be used to define an immersed Hermite projection/interpolation operator
$\check{\S}_{\calpha,r}: \H^{4}_{\calpha,r}(\cI) \rightarrow \V^{3}_{\calpha,r}(\cI)$ such that $\check{u}_H=\check{\S}_{\calpha,r}\check{u}$ and
\begin{equation}
\check{u}_H=\sum_{i=0}^3 \sigma_i(\check{u})L^{i}_{\calpha,r}.\label{eqn:IHP_dof_definition}
\end{equation}

\commentout{
        Now, let $I^h_{\alpha,r}:\H^4_{\alpha,r}(I)\to Q^3_{\alpha,r}(\T_h)$ be the second order interpolation operator: $$(I^h_{\alpha,r}u)(x_k)=u(x_k),\quad (I^h_{\alpha,r}u)'(x_k)=u'(x_k)\qquad k=0,1,\dots, N.$$
        The construction of $I^h_{\alpha,r}u$ on non-interface elements is straightforward. Here, we focus on the construction of $I^h_{\alpha,r}u$ on the interface element.
        Let $\check{\S}_{\calpha,r} $ be the reference  immersed Hermite projection that maps $\cu \in \H^{4}_{\calpha,r}(\cI)$ to $\cu_H\in\V^{3}_{\calpha,r}(\cI)$, where $$\sigma_i(\cu_H)=\sigma_i(\cu),\qquad i=0,1,2,3.$$
        We first show that $\check{\S}_{\calpha,r}$ is well defined.

        \begin{lemma}
            \label{lem:Hermite_defined}
            Let $\beta^{\pm}>0$, $\calpha\in (0,1)$ and $\cu\in  \H^{4}_{\calpha,r}(\cI)$, then there is a unique $\cu_H\in \V^{4}_{\calpha,r}(\cI)$ such that
        \begin{equation}\sigma_i(\cu_H)=\sigma_i(\cu),\qquad i=0,1,2,3.\label{eqn:sigma_def_u_H}\end{equation}
        \end{lemma}

        \begin{proof}
            It is enough to show that if $\cu\equiv 0$, then $\cu_H$ defined in \eqref{eqn:sigma_def_u_H} is identically zero.  If $u\equiv 0$, then $\cu_H(0)=\cu_H(1)=0$, by Rolle's theorem, there is $c\in(0,1)$ such that $\cu_H'(c)=0$. Hence, $\cu_H'$ has three distinct zeros since $\cu_H'(0)=\cu_H'(1)=0$. Using \autoref{thm:Zeros of IFE basis functions}, we conclude that $\cu_H'\equiv 0$ since $\cu_H'\in \V^{2}_{\calpha,r}(\cI)$. Consequently, $\cu_H\equiv 0$.
        \end{proof}

}

\begin{lemma}\label{lem:bounds_on_Hermite}
    Let $\beta^\pm>0$ and $\calpha \in(0,1)$, then
\begin{align}
-1<L^i_{\calpha,r}(x)<1,\qquad \forall\ x\in[0,1],\quad i=0,1,2,3. \label{eq:bnds_for_beam_basis}
\end{align}
\end{lemma}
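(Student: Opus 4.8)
The plan is to treat the four basis functions according to the type of degree of freedom they carry, and in each case to reduce the bound to a root‑counting statement for a derivative of $L^i_{\calpha,r}$. Throughout I would use that differentiation maps $\V^3_{\calpha,r}(\cI)$ into $\V^2_{\calpha,\tau(r)}(\cI)$ and then into $\V^1_{\calpha,\tau^2(r)}(\cI)$ (cf. \eqref{eqn:first_shift_operator}), so that \autoref{thm:Zeros of IFE basis functions} caps the number of roots of each derivative at $2$ and $1$ respectively. Two structural facts will be used repeatedly: since $r_0=r_1=1$ every $\varphi\in\V^3_{\calpha,r}(\cI)$ is $C^1$ on all of $[0,1]$, so Rolle's theorem, the mean value theorem and the fundamental theorem of calculus apply with no special treatment at $\calpha$; and since all jump factors are positive, $\varphi$ and its IFE derivatives do not change sign across $\calpha$, so sign arguments proceed exactly as for ordinary piecewise polynomials.

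First I would dispatch the value‑nodal functions $L^0_{\calpha,r}$ and $L^1_{\calpha,r}$ by monotonicity. For $L^0_{\calpha,r}$ the Hermite conditions \eqref{eqn:Hermite_basis} give $(L^0_{\calpha,r})'(0)=(L^0_{\calpha,r})'(1)=0$, so $(L^0_{\calpha,r})'\in\V^2_{\calpha,\tau(r)}(\cI)$ already has two roots; by \autoref{thm:Zeros of IFE basis functions} these are all of its roots and both are simple, hence $(L^0_{\calpha,r})'$ keeps one sign on $(0,1)$ and $L^0_{\calpha,r}$ is monotone. Since $L^0_{\calpha,r}(0)=1$ and $L^0_{\calpha,r}(1)=0$, this confines $L^0_{\calpha,r}$ between its endpoint values $0$ and $1$; $L^1_{\calpha,r}$ is identical.

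The heart of the argument is the slope‑nodal function $L^2_{\calpha,r}$, where root‑counting alone does not suffice and a convexity estimate is required. Here $L^2_{\calpha,r}(0)=L^2_{\calpha,r}(1)=0$, so Rolle supplies an interior critical point $\xi$; together with $(L^2_{\calpha,r})'(1)=0$ this exhibits two roots of $(L^2_{\calpha,r})'\in\V^2_{\calpha,\tau(r)}(\cI)$, which are therefore its only roots and simple. Because $(L^2_{\calpha,r})'(0)=1>0$, a sign analysis gives $L^2_{\calpha,r}>0$ on $(0,1)$ with a unique maximum $M=L^2_{\calpha,r}(\xi)$, and $(L^2_{\calpha,r})'<0$ on $(\xi,1)$. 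The key step is to locate the single admissible root of $(L^2_{\calpha,r})''\in\V^1_{\calpha,\tau^2(r)}(\cI)$: since $(L^2_{\calpha,r})'$ vanishes at both ends of $[\xi,1]$ and is negative inside, it has an interior minimum there, forcing that root into $(\xi,1)$. Hence $(L^2_{\calpha,r})''$ has no root on $(0,\xi)$, and as $\int_0^\xi (L^2_{\calpha,r})''\,dx=(L^2_{\calpha,r})'(\xi)-(L^2_{\calpha,r})'(0)=-1<0$ it is strictly negative there, so $(L^2_{\calpha,r})'$ is strictly decreasing on $[0,\xi]$ and stays below its value $1$ at $0$. The fundamental theorem of calculus then yields $M=\int_0^\xi (L^2_{\calpha,r})'\,dx<\xi<1$, while $L^2_{\calpha,r}\ge 0>-1$. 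Finally $L^3_{\calpha,r}$ follows from the reflection $x\mapsto 1-x$, which exchanges $\beta^-$ and $\beta^+$, sends $\calpha$ to $1-\calpha$, and identifies $L^3_{\calpha,r}(x)$ with $-L^2_{\hat\calpha,\hat r}(1-x)$ for the reflected data; applying the bound just proved to that configuration gives $-1<L^3_{\calpha,r}\le 0$.

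The main obstacle is exactly the strict bound for $L^2_{\calpha,r}$ (hence for $L^3_{\calpha,r}$): its supremum is an interior value that root‑counting cannot push below $1$, so one must compare the interior hump to the prescribed endpoint slope through the sign of $(L^2_{\calpha,r})''$. The delicate point in executing this is justifying the calculus — Rolle, the interior minimum of $(L^2_{\calpha,r})'$, and the integral of $(L^2_{\calpha,r})''$ — uniformly across the interface $\calpha$, which the $C^1$ regularity and the positivity of the jump factors make legitimate.
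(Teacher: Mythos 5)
Your proposal is correct and follows essentially the same route as the paper: monotonicity via root-counting (\autoref{thm:Zeros of IFE basis functions}) for $L^0_{\calpha,r}$ and $L^1_{\calpha,r}$, localization of the unique root of $(L^2_{\calpha,r})''$ to the right of the interior critical point $\xi$ so that $L^2_{\calpha,r}(\xi)=\int_0^\xi (L^2_{\calpha,r})'(x)\,dx<\xi<1$, and the reflection $L^3_{\calpha,r}(x)=-L^2_{1-\calpha,\tilde{r}}(1-x)$ with $\tilde{r}=\{r_i^{-1}\}$. The one step you flag but only sketch --- the possibility that the extremum of $(L^2_{\calpha,r})'$ lands exactly at $\calpha$ --- is resolved in the paper precisely as you indicate, by using the positivity of the jump factor $\beta^-/\beta^+$ to conclude $q''(\calpha^-)=q''(\calpha^+)=0$.
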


\begin{proof}
 See \autoref{sec:Hermite_proof}
\end{proof}
Now, we are ready to establish that $\{\check{\S}_{\calpha,r}\}_{0<\calpha<1}$ is a collection of uniformly bounded  of RIFE projections.

\begin{lemma}\label{lem:ihp_basis_bounded}
Let $\beta^\pm>0, \calpha\in(0,1)$. Then there is a constant $C$ independent of $\calpha$ such that the following estimate holds for every
$\cu\in \H^{4}_{\calpha,r}(\cI)$ 
$$\norm{\check{\S}_{\calpha,r}\check{u}}_{0,\cI}\le C\norm{\check{u}}_{2,\cI}.$$
\end{lemma}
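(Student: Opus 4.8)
The plan is to exploit the explicit expansion of the interpolant in \eqref{eqn:IHP_dof_definition} together with the uniform pointwise bound on the basis functions from \autoref{lem:bounds_on_Hermite}, reducing everything to estimating the four Hermite degrees of freedom $\sigma_i(\check{u})$ by $\norm{\check{u}}_{2,\cI}$ with a constant free of $\calpha$. First I would take $L^2$ norms in \eqref{eqn:IHP_dof_definition} and apply the triangle inequality to obtain
\begin{equation*}
\norm{\check{\S}_{\calpha,r}\check{u}}_{0,\cI}\le \sum_{i=0}^3 |\sigma_i(\check{u})|\,\norm{L^i_{\calpha,r}}_{0,\cI}.
\end{equation*}
Since $|L^i_{\calpha,r}(x)|<1$ on $[0,1]$ by \autoref{lem:bounds_on_Hermite} and $\cI$ has unit length, each $\norm{L^i_{\calpha,r}}_{0,\cI}\le 1$ independently of $\calpha$, leaving $\norm{\check{\S}_{\calpha,r}\check{u}}_{0,\cI}\le |\check{u}(0)|+|\check{u}(1)|+|\check{u}'(0)|+|\check{u}'(1)|$.

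The remaining task is to control these four endpoint values. The naive route, bounding $\check{u}(0),\check{u}'(0)$ by a trace/Sobolev embedding on the subinterval $\cI^-$ and $\check{u}(1),\check{u}'(1)$ on $\cI^+$, does \emph{not} produce a uniform constant: when $\calpha$ approaches $0$ the subinterval $\cI^-$ degenerates and the embedding constant $H^2(\cI^-)\hookrightarrow C^1(\overline{\cI^-})$ blows up like a negative power of $|\cI^-|$. I expect this to be the main obstacle, and the key observation that removes it is that for the Euler--Bernoulli sequence $r=\left(1,1,\tfrac{\beta^-}{\beta^+},\tfrac{\beta^-}{\beta^+}\right)$ one has $r_0=r_1=1$. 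Hence every $\check{u}\in\H^4_{\calpha,r}(\cI)$ satisfies $\check{u}(\calpha^-)=\check{u}(\calpha^+)$ and $\check{u}'(\calpha^-)=\check{u}'(\calpha^+)$, so that $\check{u}$ and $\check{u}'$ are continuous across $\calpha$; combined with $\check{u}|_{\cI^\pm}\in H^4(\cI^\pm)$ this shows $\check{u}\in H^2(0,1)$ as a genuine Sobolev function on the whole reference interval, with $\norm{\check{u}}_{H^2(0,1)}=\norm{\check{u}}_{2,\cI}$.

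With this global regularity in hand I would invoke the Sobolev embedding $H^2(0,1)\hookrightarrow C^1([0,1])$ on the fixed interval $[0,1]$, whose constant $C$ is absolute and, in particular, independent of $\calpha$. This yields
\begin{equation*}
|\check{u}(0)|+|\check{u}(1)|+|\check{u}'(0)|+|\check{u}'(1)|\le 4\,\norm{\check{u}}_{C^1([0,1])}\le 4C\,\norm{\check{u}}_{H^2(0,1)}=4C\,\norm{\check{u}}_{2,\cI},
\end{equation*}
and combining this with the first display completes the proof. The only genuinely delicate point is the uniformity in $\calpha$, which hinges entirely on $r_0=r_1=1$ promoting the piecewise $H^4$ function to a global $H^2$ function on $(0,1)$; for a general sequence $r$ the endpoint values could not be controlled uniformly in $\calpha$ by this argument.
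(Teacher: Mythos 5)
Your proof is correct and follows essentially the same route as the paper: expand via \eqref{eqn:IHP_dof_definition}, bound $\norm{L^i_{\calpha,r}}_{0,\cI}\le 1$ using \autoref{lem:bounds_on_Hermite}, and control the degrees of freedom $\sigma_i(\check{u})$ by $\norm{\check{u}}_{2,\cI}$ via the embedding $H^2(\cI)\hookrightarrow C^1(\overline{\cI})$ on the fixed unit interval. In fact, you make explicit the step the paper states only as ``since $\check{u}\in H^2(\cI)$,'' namely that $r_0=r_1=1$ forces $\check{u}$ and $\check{u}'$ to be continuous across $\calpha$, so the embedding constant is $\calpha$-independent --- a correct and worthwhile elaboration of the paper's one-line justification.
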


\begin{proof}
    We know that $\sigma_i(\check{u})\le C \norm{\check{u}}_{2,\cI}$ since $\check{u}\in H^2(\cI)$. Now, we apply the triangle inequality to \eqref{eqn:IHP_dof_definition}  and \autoref{lem:bounds_on_Hermite} to get
    $$\norm{\check{\S}_{\calpha,r}\check{u}}_{0,\cI}\le C\norm{\check{u}}_{2,\cI} \left(\sum_{i=0}^3 \norm{L^i_{\calpha,r}}_{0,\cI}\right)\le 4C\norm{\check{u}}_{2,\cI}.$$
\end{proof}

Now, let $\S_{\alpha,r}=\M^{-1}\circ \S_{\calpha,r}\circ\M$ where $\M$ is defined in \eqref{eqn:translation}. By the commutative diagram in \eqref{eqn:phys_ref_diagram}, $\S_{\alpha,r}$ is the local immersed Hermite interpolation. Then, by \autoref{lem:ihp_basis_bounded}, $\{\S_{\alpha,r}\}_{\calpha\in \overset{\circ}{I}_{k_0}}$ is a collection of uniformly bounded LIFE projections. Hence, the following theorem follows from \autoref{thm:general_theorem_LIFE}.
\begin{theorem}
Let $\beta^\pm>0$, $i\in\{0,1,2,3\}$,  $\alpha\in\Ik$. Then, there is a constant $C$ independent of $\alpha$ such that 
the following estimate holds for every  $u\in \H^{4}_{\alpha,r}(\Ik)$
$$\norm{u-{\S}_{\alpha,r}u}_{i,\Ik}\le C h^{3-i}|u|_{4,\Ik}.$$
\end{theorem}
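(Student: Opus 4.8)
The plan is to observe that the scaling-argument machinery of \autoref{sec:Bramble_Hilbert} has already reduced this statement to a routine verification: it suffices to check that the local immersed Hermite interpolation $\S_{\alpha,r}$ satisfies the two hypotheses of \autoref{thm:general_theorem_LIFE} with $m=3$, invoke that theorem, and then pass from its seminorm conclusion to the full-norm estimate claimed here. First I would confirm the projection property \eqref{eqn:RIFE_preserving} for the reference operator $\check{\S}_{\calpha,r}$. For any $\cp\in\V^3_{\calpha,r}(\cI)$, writing $\cp=\sum_{j=0}^3 c_j L^j_{\calpha,r}$ in the Hermite basis and applying $\sigma_i$ together with the duality \eqref{eqn:Hermite_basis} gives $c_i=\sigma_i(\cp)$, whence $\check{\S}_{\calpha,r}\cp=\sum_{i=0}^3\sigma_i(\cp)L^i_{\calpha,r}=\cp$ by \eqref{eqn:IHP_dof_definition}; so $\check{\S}_{\calpha,r}$ fixes every RIFE function.

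Next I would invoke \autoref{lem:ihp_basis_bounded}, which supplies $\norm{\check{\S}_{\calpha,r}\cu}_{0,\cI}\le C\norm{\cu}_{2,\cI}$ with $C$ independent of $\calpha$; this is precisely the uniform bound \eqref{eqn:def_uniformly_bounded} of \autoref{def:uniform_boundedness} with $j=2$ (and indeed $0\le 2\le m+1=4$). Combined with the projection property, this shows $\{\check{\S}_{\calpha,r}\}_{0<\calpha<1}$ is a uniformly bounded collection of RIFE projections, and hence, through the commutative diagram \eqref{eqn:phys_ref_diagram}, $\{\S_{\alpha,r}\}$ is a uniformly bounded collection of LIFE projections. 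Applying \autoref{thm:general_theorem_LIFE} with $m=3$ then yields the seminorm estimate $|u-\S_{\alpha,r}u|_{i,\Ik}\le C h^{4-i}|u|_{4,\Ik}$ for $i=0,1,2,3$.

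Finally I would upgrade this to the stated full-norm bound by summing over the lower-order seminorms: since $\norm{u-\S_{\alpha,r}u}_{i,\Ik}^2=\sum_{k=0}^i|u-\S_{\alpha,r}u|_{k,\Ik}^2\le C^2|u|_{4,\Ik}^2\sum_{k=0}^i h^{2(4-k)}$ and (in the refinement regime) $h\le 1$, the term with the smallest exponent $k=i$ dominates, giving $\norm{u-\S_{\alpha,r}u}_{i,\Ik}\le C h^{4-i}|u|_{4,\Ik}\le C h^{3-i}|u|_{4,\Ik}$, which is the claim (in fact one power of $h$ sharper). I do not expect any genuine obstacle at this stage: the real difficulty of the subsection is concentrated in \autoref{lem:bounds_on_Hermite} and \autoref{lem:ihp_basis_bounded}, where the uniform-in-$\calpha$ control of the immersed Hermite basis functions is established; once those are in hand, the present theorem is a direct consequence of the unified scaling argument, with only the elementary seminorm-to-norm bookkeeping left to perform.
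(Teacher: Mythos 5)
Your proposal is correct and takes essentially the same route as the paper, which likewise deduces the theorem directly from \autoref{lem:ihp_basis_bounded} and \autoref{thm:general_theorem_LIFE} via the commutative diagram \eqref{eqn:phys_ref_diagram}. You merely make explicit two details the paper leaves tacit --- the verification that $\check{\S}_{\calpha,r}$ fixes every element of $\V^3_{\calpha,r}(\cI)$ (needed for the projection hypothesis \eqref{eqn:RIFE_preserving}) and the seminorm-to-norm bookkeeping, which, as you correctly observe, actually yields the sharper rate $h^{4-i}$ dominating the stated $h^{3-i}$ when $h\le 1$.
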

This theorem establishes the optimal approximation capability of the IFE space $Q^3_{\alpha,r}(\T_h)$ which was first derived in
\cite{linErrorAnalysisImmersed2017} with a lengthy and complex procedure.

\section{Conclusion}
In this manuscript  we developed a framework for analyzing the approximation properties of one-dimensional IFE spaces using the scaling argument. We have applied this IFE scaling argument to establish the optimal convergence of IFE spaces constructed for solving the acoustic interface problem, the elliptic interface problem and the Euler-Bernoulli beam interface problem, respectively. We are currently working on extending these results to IFE spaces and  methods for solving  interface problems in two and three dimensions.

\begin{appendices}
    \section{Proof of \autoref{lem:norm_on_Q}}
\label{sec:proof_of_lemma_4}
Our goal is to show that the ratio $\frac{\sqrt{\cp(0)^2+\cp(1)^2}}{\norm{\cp}_{0,\cI}}$ is bounded from below by a constant $c(m,r,w)$ independent of $\calpha$.
For simplicity, let $q_i\in\P^{m}([0,1])$ be the monomial basis $q_i(x)=x^i$ for $0\le i\le m$. Using the equivalence of norms, one can show that there is $c_1(m)>0$ such that
\begin{equation}\min\left(|p(0)|,|p(1)|\right)+\sum_{i=0}^{m-1}\left|(p,q_i)_{[0,1]}\right|\ge c_1(m)\norm{p}_{0,[0,1]},\qquad \forall\ p\ \in \P^{m}([0,1]).
\label{eqn:legendre_norm_equivalence}
\end{equation}

Unfortunately, if we extend \eqref{eqn:legendre_norm_equivalence} to $\V^{m}_{\calpha,r}$, then the constant on the right might depend on $\calpha$ and might grow unboundedly as $\calpha\to0^+$ or as $\calpha\to 1^{-}$. To circumvent this issue, we will use a scaling trick similar to the one used in the proof of \autoref{lem:norm_of_extension}. First, we bound $(\cp,q_i)_{w_s,\cI^s}$ as shown in the following lemma

\begin{lemma}
\label{lem:almost orthogonality}
Let $\tilde{m}\ge m\ge 0$, $\{r_{k}\}_{k=0}^m\subset \mathbb{R}_+$ and $\calpha\in (0,1)$, there is $C(m,r,w)>0$ such that if $\ch_s>\ch_{s'}$, then
$$\left|(\cp_s,q_i)_{\cI^s}\right|=\left|\int_{\cI^s}\cp_s(x)x^idx\right|\le C(m,r,w)h_{s'}\norm{\cp_s}_{0,\cI^s},\quad i=0,1,\dots,m-1,\ \forall\cp\in \cQ.$$
\end{lemma}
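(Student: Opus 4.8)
The plan is to exploit the defining property of $\cQ$: every $\cp=(\cp_-,\cp_+)\in\cQ$ is orthogonal to $\V^{m-1}_{\calpha,r}(\cI)$ with respect to the weighted product $(\cdot,\cdot)_{w,\cI}$. Since $q_i(x)=x^i$ has degree $i\le m-1$, it belongs to $\P^{m-1}(\cI^s)$, so by \autoref{lem:Dimension and uniqueness of extensions} it admits a unique extension $\cpsi_{s'}\in\P^{m-1}(\cI^{s'})$ for which the pair $\cpsi$ with components $\cpsi_s=q_i$ and $\cpsi_{s'}$ lies in $\V^{m-1}_{\calpha,r}(\cI)$. The idea is to feed precisely this $\cpsi$ into the orthogonality relation, turning a quantity on the large subinterval into one on the small subinterval.

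First I would split the weighted inner product across the interface. Because $w$ is piecewise constant with values $w_s,w_{s'}$, the orthogonality $(\cp,\cpsi)_{w,\cI}=0$ reads
$$0=(\cp,\cpsi)_{w,\cI}=w_s\,(\cp_s,q_i)_{\cI^s}+w_{s'}\,(\cp_{s'},\cpsi_{s'})_{\cI^{s'}},$$
which rearranges to
$$(\cp_s,q_i)_{\cI^s}=-\frac{w_{s'}}{w_s}\,(\cp_{s'},\cpsi_{s'})_{\cI^{s'}}.$$
Thus the partial integral over the large side $\cI^s$ is converted into an integral over the small side $\cI^{s'}$, and a single Cauchy--Schwarz step gives
$$\bigl|(\cp_s,q_i)_{\cI^s}\bigr|\le\frac{w_{s'}}{w_s}\,\norm{\cp_{s'}}_{0,\cI^{s'}}\,\norm{\cpsi_{s'}}_{0,\cI^{s'}}.$$

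The key gain now comes from bounding each small-side norm by a large-side norm via the extension estimate \eqref{eqn:norm_of_smallest} of \autoref{lem:norm_of_extension}, which is exactly valid under the hypothesis $\ch_s\ge\ch_{s'}$. Applying it once to $\cp\in\cV$ gives $\norm{\cp_{s'}}_{0,\cI^{s'}}\le C\sqrt{\ch_{s'}}\,\norm{\cp_s}_{0,\cI^s}$, and applying it in its degree-$(m-1)$ form to $\cpsi$ gives $\norm{\cpsi_{s'}}_{0,\cI^{s'}}\le C\sqrt{\ch_{s'}}\,\norm{q_i}_{0,\cI^s}$. Since $\norm{q_i}_{0,\cI^s}^2=\int_{\cI^s}x^{2i}\,dx\le\int_0^1 x^{2i}\,dx\le 1$, the monomial norm is bounded by a constant. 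Multiplying the two bounds produces two factors of $\sqrt{\ch_{s'}}$, hence a single factor $\ch_{s'}=h_{s'}$, and absorbing $w_{s'}/w_s$, the $r$-dependent constants from \eqref{eqn:norm_of_smallest}, and the monomial norm into one constant $C(m,r,w)$ yields the claimed bound.

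I do not expect a serious obstacle here; the only points requiring care are (i) checking that $q_i$ genuinely lies in $\P^{m-1}$ so that its extension lives in $\V^{m-1}_{\calpha,r}(\cI)$ and the orthogonality of $\cQ$ applies (this is why the range is restricted to $i\le m-1$), and (ii) invoking \eqref{eqn:norm_of_smallest} in the correct direction, bounding the small-side norm by the large-side one, which is exactly what the hypothesis $\ch_s>\ch_{s'}$ licenses. The conceptual crux is the choice of $q_i$ itself as the test polynomial on the large side: this converts the defining orthogonality into a statement localized on the small side, where the two extension steps each contribute a factor $\sqrt{\ch_{s'}}$ and together supply the decisive power $h_{s'}$.
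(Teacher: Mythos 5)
Your proposal is correct and is essentially identical to the paper's own proof: both test the orthogonality defining $\cQ$ against the IFE extension of the monomial $q_i$, apply Cauchy--Schwarz on the small subinterval, and then invoke \eqref{eqn:norm_of_smallest} twice (once for $\cp$ and once for the extended test function) so that the two factors of $\sqrt{\ch_{s'}}$ combine into the decisive factor $h_{s'}$. Your added remarks on why $q_i\in\P^{m-1}$ matters and on the direction in which \eqref{eqn:norm_of_smallest} is applied are accurate refinements of the same argument, not a different route.
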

\begin{proof} Since $\cp\in \cQ$, we have $$0=w_s(\cp_s,q_i)_{\cI^s}+w_{s'}(\cp_{s'},\Esp(q_i))_{\cI^{s'}}.$$
Then, by Cauchy-Schwarz inequality and \eqref{eqn:norm_of_smallest}, we have

\begin{align*}\left|(\cp,q_i)_{\cI^s}\right|&=\frac{w_{s'}}{w_s}\left|(\cp,\Esp(q_i))_{\cI^{s'}}\right|\le
C(w)\norm{\cp}_{0,\cI^{s'}} \norm{\Esp(q_i)}_{0,\cI^{s'}},\\
&\le C(m,r,w)\sqrt{h_{s'}}\norm{\cp}_{0,\cI^s}\sqrt{h_{s'}}\norm{q_i}_{0,\cI^s},\\
&\le C(m,r,w)h_{s'}\norm{\cp}_{\cI^s}.
\end{align*}
\end{proof}
The previous lemma shows that $(\cp,q_i)_{\cI^s}$ will approach $0$ if $h_{s}$ approaches $1$. This will allow us to obtain a restricted version of \eqref{eqn:legendre_norm_equivalence}.
\begin{lemma}
\label{lem:delta_pointwise_bound}
There is $\delta(m,r,w)\in(0,\frac{1}{2})$ and $C(m,r)>0$ such that if $\min(h_{-},h_{+})<\delta(m,r,w)$, then
$$
|\cp(0)|+|\cp(1)|\ge C(m,r)\norm{\cp}_{0,\cI}. \qquad \forall \cp\in\cQ.
$$
\end{lemma}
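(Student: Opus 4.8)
The plan is to transfer the problem, via scaling, onto the \emph{larger} of the two subintervals, where the classical norm equivalence \eqref{eqn:legendre_norm_equivalence} is available with a $\calpha$-independent constant, and where the hypothesis $\min(h_-,h_+)<\delta$ forces the moments of $\cp$ to be negligible. By symmetry I may assume $h_-=\min(h_-,h_+)<\delta$, so that $\cI^+$ is the long interval with $h_+\ge\tfrac12$; the case $h_+<h_-$ is identical after interchanging the roles of the endpoints $0$ and $1$, and there one controls $\cp(0)$ instead of $\cp(1)$.

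First I would rescale the right piece to the unit interval by setting $\hat p(\xi)=\cp_+(\calpha+h_+\xi)\in\P^m([0,1])$, noting that $\hat p(1)=\cp(1)$ and $\norm{\hat p}_{0,[0,1]}^2=h_+^{-1}\norm{\cp_+}_{0,\cI^+}^2$. The pullbacks $(\calpha+h_+\xi)^i$, $i=0,\dots,m-1$, are $m$ polynomials of degrees $0,\dots,m-1$ and hence form a basis of $\P^{m-1}([0,1])$; the change-of-basis matrix to the monomials $\xi^j$ is triangular with diagonal $h_+^i$, so since $h_+\ge\tfrac12$ its inverse has entries bounded by a constant depending only on $m$. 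Combining this with \autoref{lem:almost orthogonality} (applicable because $h_+>h_-$) and the norm scaling above gives
$$\left|(\hat p,\xi^j)_{[0,1]}\right|\le C(m,r,w)\,h_-\,\norm{\hat p}_{0,[0,1]},\qquad j=0,\dots,m-1.$$

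Next I would insert $\hat p$ into \eqref{eqn:legendre_norm_equivalence} and absorb the now-small moment sum: choosing $\delta(m,r,w)$ so that $C(m,r,w)\,\delta<\tfrac12 c_1(m)$, the condition $h_-<\delta$ yields $\min(|\hat p(0)|,|\hat p(1)|)\ge\tfrac12 c_1(m)\norm{\hat p}_{0,[0,1]}$. In particular $|\cp(1)|=|\hat p(1)|\ge\tfrac12 c_1(m)\norm{\hat p}_{0,[0,1]}$, so undoing the scaling gives $\norm{\cp_+}_{0,\cI^+}=h_+^{1/2}\norm{\hat p}_{0,[0,1]}\le C(m,r)\,|\cp(1)|$. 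Finally I would bound the short piece by the extension estimate \eqref{eqn:norm_of_smallest} (with $s=+$, $s'=-$, valid since $h_+\ge h_-$), obtaining $\norm{\cp_-}_{0,\cI^-}\le C(m,r)\sqrt{h_-}\,\norm{\cp_+}_{0,\cI^+}\le C(m,r)\,|\cp(1)|$. Adding the two contributions proves $\norm{\cp}_{0,\cI}\le C(m,r)\big(|\cp(0)|+|\cp(1)|\big)$, with $C$ depending only on $m$ and $r$.

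I expect the main obstacle to be verifying that the smallness of the moments survives the affine rescaling \emph{uniformly} in $\calpha$: a crude estimate of the triangular change-of-basis matrix would degenerate if the long interval were allowed to shrink, and the whole argument rests on the fact that $h_+\ge\tfrac12$ keeps that matrix uniformly invertible, so the constant multiplying $h_-$ depends only on $m,r,w$ and not on the interface location. Once this is in place, the choice of $\delta$ and the assembly of the two pieces are routine.
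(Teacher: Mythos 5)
Your proposal is correct and takes essentially the same route as the paper: rescale the long subinterval (whose length is at least $\tfrac12$) to $[0,1]$, invoke the reference-element equivalence \eqref{eqn:legendre_norm_equivalence}, absorb the moment terms, which \autoref{lem:almost orthogonality} makes $O(\min(\ch_-,\ch_+))$ uniformly in $\calpha$, and then control the short piece via the extension bound \eqref{eqn:norm_of_smallest}. The only difference is that you work out the mirrored case ($\cI^+$ long) explicitly, and your triangular change-of-basis observation --- uniformly invertible precisely because $\ch_+\ge\tfrac12$ keeps the diagonal entries $\ch_+^i$ bounded below --- is exactly the detail needed to justify the paper's unproved claim that its case ($\cI^-$ long, where the affine map fixes $0$ and monomials pull back to monomials) extends ``similarly'' to the other one.
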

\begin{proof} We will only discuss the case where $h_{-}>h_{+}$, the other case can be proved similarly. We define $\hpm\in \P^m([0,1])$  as $\hpm(\xi)=\cp_-(\ch_- \xi)$, then by
the fact that $h_- \geq 1/2$,
\begin{align*}
|\cp(0)|+\sum_{i=0}^{m-1}\left|\int_0^{h_-}\cp(x)x^i\ dx\right|&=|\hpm(0)|+\sum_{i=0}^{m-1}h_-^{i+1}\left|\int_0^{1}\hpm(\xi)\xi^i\ d\xi\right|\\
&\ge |\hpm(0)|+\sum_{i=0}^{m-1}2^{-m-1}\left|(\hpm,q_i)_{[0,1]}\right| \ge C(m)\left(|\hpm(0)|+\sum_{i=0}^{m-1}\left|(\hpm,q_i)_{[0,1]}\right|\right).\\
\end{align*}
Then, by \eqref{eqn:legendre_norm_equivalence}, $h_- \leq 1$, and \eqref{eqn:norm_of_smallest}, we have
\begin{align*}
|\cp(0)|+\sum_{i=0}^{m-1}\left|\int_0^{h_-}\cp(x)x^i\ dx\right| &\geq
C(m) \norm{\hpm}_{0,[0,1]} =C(m) h_-^{-1/2}\norm{\cp}_{0,\cI^-} \ge C(m) \norm{\cp}_{0,\cI^-}\ge C_0(m,r)\norm{\cp}_{0,\cI}.
\end{align*}
Now, we use \autoref{lem:almost orthogonality} to  estimate the inner product on the left hand side:
$$\sum_{i=0}^{m-1}\left|\int_0^{h_-}\cp(x)x^i\ dx\right|\le C_1(m,r,w)h_{+} \norm{\cp}_{0,\cI^-}.$$
We combine it with the previous inequality to get
$$|\cp(0)|\ge\norm{\cp}_{0,\cI} \left(C_0(m,r)-C_1(m,r,w)h_+\right).$$
Hence, if $h_+\le \delta= \min(1,\frac{C_0(m,r)}{2C_1(m,r,w)})$, then
\begin{equation}
|\cp(0)|\ge\frac{1}{2}C_0(m,r)\norm{\cp}_{0,\cI^-}
\label{eqn:cp0_lower_bound}
\end{equation}
A similar argument can be used to show that if $h_+\le \tilde{\delta}$ (where $\tilde{\delta}$ could be different than the previous $\delta$), then
\begin{equation}
|\cp(1)|\ge\frac{1}{2}\tilde{C}_0(m,r)\norm{\cp}_{0,\cI^+}.
\label{eqn:cp1_lower_bound}
\end{equation}
\end{proof}

So far, we have shown that if one of the sub-elements $\cI^{\pm}$ is small enough, then \autoref{lem:norm_on_Q} holds. It remains to show that the lemma holds for $\calpha\in[\delta,1-\delta]$, for which, we consider the following sequence $\{\mathcal{O}^{i}_{\calpha,w,r}\}_{i=0}^m$ by the  Gram-Schmidt process:
\begin{equation}\mathcal{O}^{0}_{\calpha,w,r}=\N^0_{\calpha,r},\quad
\mathcal{O}^{i}_{\calpha,w,r}=\N^i_{\calpha,r}-\sum_{j=0}^{i-1}\frac{(\N^i_{\calpha,r},\mathcal{O}^{j}_{\calpha,w,r})_{w,\cI}}{(\mathcal{O}^{j}_{\calpha,w,r},\mathcal{O}^{j}_{\calpha,w,r})_{w,\cI}}\mathcal{O}^{j}_{\calpha,w,r},\quad i=1,2,\dots,m.\label{eqn:Construction_of_O}\end{equation}
Clearly, we have $\cO\in \cQ$. The following lemma shows that when $\cO$ is expressed in terms of the canonical basis $\{\N^i_{\calpha,r}\}_{i=0}^m$, the coefficients of the expansion are rational functions.

\begin{lemma}
\label{lem:expansion_of_O}
Let $\tilde{m}\ge m\ge 0$, $\{r_{k}\}_{k=0}^m\subset \mathbb{R}_+$ and $\calpha\in (0,1)$, the orthogonal RIFE function $\cO$ defined in \eqref{eqn:Construction_of_O} satisfies
\begin{equation}
\cO=\sum_{i=0}^m R_{w,r}^{i,m}(\calpha)\N^i_{\calpha,r} \label{eqn:expansion_of_O}\end{equation}
for some rational functions $\left\{R_{w,r}^{i,m}\right\}_{i=0}^m$ of $\calpha$.
\end{lemma}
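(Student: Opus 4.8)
The plan is to show that every scalar produced by the Gram--Schmidt recursion \eqref{eqn:Construction_of_O} is a rational function of $\calpha$, by first observing that the entries of the Gram matrix of the canonical basis are \emph{polynomials} in $\calpha$, and then propagating rationality through the recursion by induction on the index. The coefficients $R^{i,m}_{w,r}$ in \eqref{eqn:expansion_of_O} will then appear as finite sums of products and quotients of these polynomial entries.

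First I would compute the weighted inner products of the canonical basis explicitly. Using $w_-,w_+>0$, the fact that $\ch_-=\calpha$ and $\ch_+=1-\calpha$, and the substitution $z=x-\calpha$, one gets for $0\le i,j\le m$
$$
(\N^i_{\calpha,r},\N^j_{\calpha,r})_{w,\cI}
=\frac{1}{i+j+1}\Big[\,w_-(-1)^{i+j}\calpha^{\,i+j+1}+w_+\,r_i r_j\,(1-\calpha)^{\,i+j+1}\,\Big],
$$
which is a polynomial in $\calpha$. Hence the entire Gram matrix $G=\big((\N^i_{\calpha,r},\N^j_{\calpha,r})_{w,\cI}\big)_{i,j}$ has polynomial entries.

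Next I would induct on $i$. Write $\mathcal{O}^{i}_{\calpha,w,r}=\sum_{l=0}^{i}a^{i}_{l}(\calpha)\,\N^l_{\calpha,r}$ and claim each $a^i_l$ is rational in $\calpha$. The base case $\mathcal{O}^{0}_{\calpha,w,r}=\N^0_{\calpha,r}$ gives $a^0_0\equiv 1$. Assuming the claim for all indices below $i$, bilinearity of $(\cdot,\cdot)_{w,\cI}$ together with the polynomial Gram entries yields
$$
(\N^i_{\calpha,r},\mathcal{O}^{j}_{\calpha,w,r})_{w,\cI}=\sum_{l=0}^{j}a^{j}_l(\calpha)\,(\N^i_{\calpha,r},\N^l_{\calpha,r})_{w,\cI},\qquad
(\mathcal{O}^{j}_{\calpha,w,r},\mathcal{O}^{j}_{\calpha,w,r})_{w,\cI}=\sum_{l,l'}a^{j}_l(\calpha)\, a^{j}_{l'}(\calpha)\,(\N^l_{\calpha,r},\N^{l'}_{\calpha,r})_{w,\cI},
$$
both rational in $\calpha$, being finite sums of products of rational functions with polynomials. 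Substituting these into \eqref{eqn:Construction_of_O} shows each Gram--Schmidt multiplier $\dfrac{(\N^i_{\calpha,r},\mathcal{O}^{j}_{\calpha,w,r})_{w,\cI}}{(\mathcal{O}^{j}_{\calpha,w,r},\mathcal{O}^{j}_{\calpha,w,r})_{w,\cI}}$ is rational, and collecting the coefficients of each $\N^l_{\calpha,r}$ gives rational $a^i_l$. Taking $i=m$ identifies $a^m_l=R^{l,m}_{w,r}$ and completes the induction, hence \eqref{eqn:expansion_of_O}.

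The one delicate point, which is really the main obstacle, is to guarantee that the denominators in this process are genuine nonzero rational functions rather than the zero polynomial. Since $w>0$ makes $(\cdot,\cdot)_{w,\cI}$ a bona fide inner product, and since $\{\N^l_{\calpha,r}\}_{l=0}^{j}$ is linearly independent for every $\calpha\in(0,1)$ by \autoref{lem:Dimension and uniqueness of extensions}, each orthogonalized function $\mathcal{O}^{j}_{\calpha,w,r}$ is nonzero, so $(\mathcal{O}^{j}_{\calpha,w,r},\mathcal{O}^{j}_{\calpha,w,r})_{w,\cI}>0$ for every $\calpha\in(0,1)$. Therefore each denominator is a polynomial in $\calpha$ that is strictly positive on $(0,1)$, hence not identically zero, and the quotients are well-defined rational functions of $\calpha$. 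This confirms that the expansion coefficients $R^{i,m}_{w,r}$ are rational, as claimed.
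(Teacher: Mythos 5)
Your proof is correct and takes essentially the same route as the paper's: induction through the Gram--Schmidt recursion, using that the Gram entries $(\N^i_{\calpha,r},\N^j_{\calpha,r})_{w,\cI}$ are polynomials in $\calpha$ so that every multiplier, and hence every expansion coefficient, is a rational function of $\calpha$. Your explicit Gram-entry formula and your observation that each denominator $(\mathcal{O}^{j}_{\calpha,w,r},\mathcal{O}^{j}_{\calpha,w,r})_{w,\cI}$ is strictly positive on $(0,1)$ (hence not the zero polynomial) merely make explicit two points the paper leaves implicit.
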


\begin{proof}
We will prove \autoref{lem:expansion_of_O} via strong induction. First, the case $m=0$ is obvious. Now,we assume that For every $i=0,1,\dots,m-1$, there are rational functions $R^{j,i}_{w,r}$ of $\calpha$ such that
\begin{equation}\mathcal{O}^i_{\calpha,w,r} =\sum_{j=0}^i R_{w,r}^{j,i}(\calpha)\N^j_{\calpha,r}.
\label{eqn:Oi_expansion}
\end{equation}
To show that $\cO$ satisfies \eqref{eqn:expansion_of_O}, we use the fact that $(\N^i_{\calpha,r},\N^j_{\calpha,r})_{w,\cI}$ is a polynomial in $\calpha$. Therefore,
\begin{equation}
\frac{(\N^i_{\calpha,r},\mathcal{O}^{j}_{\calpha,w,r})_{w,\cI}}{(\mathcal{O}^{j}_{\calpha,w,r},\mathcal{O}^{j}_{\calpha,w,r})_{w,\cI}}
\label{eqn:inner_prod_GM}
\end{equation}
is a rational function of $\calpha$. Furthermore, by plugging \eqref{eqn:Oi_expansion} into \eqref{eqn:Construction_of_O} and rearranging the terms, we get

$$\mathcal{O}^m_{\calpha,w,r}=\N^m_{\calpha,r}-\sum_{j=0}^{m-1}
\frac{(\N^m_{\calpha,r},\mathcal{O}^{j}_{\calpha,w,r})_{w,\cI}}{(\mathcal{O}^{j}_{\calpha,w,r},\mathcal{O}^{j}_{\calpha,w,r})_{w,\cI}}
\sum_{i=0}^j R^{i,j}_{w,r}(\calpha)\N^i_{\calpha,r} =
\N^m_{\calpha,r}-\sum_{i=0}^{m-1}\underbrace{\left(\sum_{j=i}^{m-1}
\frac{(\N^m_{\calpha,r},\mathcal{O}^{j}_{\calpha,w,r})_{w,\cI}}{(\mathcal{O}^{j}_{\calpha,w,r},\mathcal{O}^{j}_{\calpha,w,r})_{w,\cI}}
 R^{i,j}_{w,r}(\calpha)\right)}_{:=R^{j,m}_{w,r}(\calpha)}\N^i_{\calpha,r}.
$$
From the strong induction assumption and \eqref{eqn:inner_prod_GM}, we conclude that $R^{j,m}_{w,r}$ is a rational function.
\end{proof}

\begin{corollary}\label{coro:J_is_rational}
Given $m,w_{\pm}$ and $r$, the function $\mathcal{J}_{w,r}^m:(0,1)\to \mathbb{R}_+$ defined in \eqref{eqn:definition_of_J} is a rational function.
\end{corollary}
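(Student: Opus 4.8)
The plan is to interpret $\mathcal{J}^m_{w,r}$ as the scale-invariant ratio $\bigl(\cp(0)^2+\cp(1)^2\bigr)/\norm{\cp}_{0,\cI}^2$ evaluated on any nonzero $\cp\in\cQ$, and then to show that its numerator and denominator are separately rational functions of $\calpha$ by substituting the explicit Gram--Schmidt representative $\cO$ supplied by \autoref{lem:expansion_of_O}. Since $\cQ$ is one-dimensional by \autoref{thm:Zeros of orthogonal IFE functions}, this squared ratio does not depend on the choice of $\cp$, so I may simply take $\cp=\cO$. (It is the squared ratio, rather than the square-rooted one appearing in the statement of \autoref{lem:norm_on_Q}, that can be rational; I will use this reading of \eqref{eqn:definition_of_J}.) The ratio is genuinely defined because a nonzero $\cp\in\cQ$ vanishing at both $0$ and $1$ would have at least $m+2$ roots, contradicting \autoref{thm:Zeros of IFE basis functions}; hence the numerator is strictly positive and the denominator is nonzero on $(0,1)$.

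First I would invoke \autoref{lem:expansion_of_O} to write $\cO=\sum_{i=0}^m R_{w,r}^{i,m}(\calpha)\,\N^i_{\calpha,r}$ with each coefficient $R_{w,r}^{i,m}$ rational in $\calpha$. By the definition of the canonical basis in \eqref{eqn:definition_of_canonical_basis}, the endpoint values are $\N^i_{\calpha,r}(0)=(-\calpha)^i$ and $\N^i_{\calpha,r}(1)=r_i(1-\calpha)^i$, both polynomial in $\calpha$, so that $\cO(0)$ and $\cO(1)$ are rational in $\calpha$; therefore the numerator $\cO(0)^2+\cO(1)^2$ is rational. For the denominator I would expand
\[
\norm{\cO}_{0,\cI}^2=\sum_{i,j=0}^m R_{w,r}^{i,m}(\calpha)\,R_{w,r}^{j,m}(\calpha)\left(\int_0^{\calpha}(x-\calpha)^{i+j}\,dx+r_ir_j\int_{\calpha}^1(x-\calpha)^{i+j}\,dx\right),
\]
and note that each inner integral is a polynomial in $\calpha$, since $\int_0^{\calpha}(x-\calpha)^k\,dx=-\tfrac{(-\calpha)^{k+1}}{k+1}$ and $\int_{\calpha}^1(x-\calpha)^k\,dx=\tfrac{(1-\calpha)^{k+1}}{k+1}$. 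Multiplying these polynomials by the rational coefficients and summing keeps everything inside the field of rational functions, so $\norm{\cO}_{0,\cI}^2$ is rational; dividing the rational numerator by the rational denominator yields that $\mathcal{J}^m_{w,r}$ is rational on $(0,1)$, as claimed.

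I do not anticipate a genuine obstacle: the corollary is a bookkeeping consequence of \autoref{lem:expansion_of_O}, whose proof already did the substantive work of propagating rationality through the Gram--Schmidt recursion. The only points requiring mild care are confirming that $\mathcal{J}^m_{w,r}$ is the squared ratio so that no irrational square root survives, and confirming that the denominator does not vanish on $(0,1)$, which follows from $\cO\not\equiv 0$ together with the equivalence of norms on the finite-dimensional space $\cV$.
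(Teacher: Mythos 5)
Your proposal is correct and follows essentially the same route as the paper, which states \autoref{coro:J_is_rational} as an immediate consequence of \autoref{lem:expansion_of_O}: expand $\cO$ in the canonical basis $\{\N^i_{\calpha,r}\}_{i=0}^m$ with rational coefficients and observe that the endpoint values $\N^i_{\calpha,r}(0)=(-\calpha)^i$, $\N^i_{\calpha,r}(1)=r_i(1-\calpha)^i$ and the pairwise inner products $(\N^i_{\calpha,r},\N^j_{\calpha,r})_{\cI}$ are polynomials in $\calpha$, so both numerator and denominator of $\mathcal{J}^m_{w,r}$ are rational. Your extra verifications --- that the denominator is nonzero since $\cO\not\equiv 0$, and that the numerator is positive because a nonzero element of $\cQ$ vanishing at both endpoints would exceed the root bound of \autoref{thm:Zeros of IFE basis functions} --- are exactly the details the paper leaves implicit (and uses later when asserting continuity of $\mathcal{J}^m_{w,r}$ on $[\delta,1-\delta]$).
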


Now, we are ready to prove \autoref{lem:norm_on_Q}. We can rewrite \autoref{lem:delta_pointwise_bound} as: there is $\delta\in (0,\frac{1}{2})$ that depends on $m,w$ and $r$, and a constant $C_1(m,r)$ such that $$\sqrt{\cO(0)^2+\cO(1)^2}\ge C_1(m,r)\norm{\cO}_{0,\cI},\qquad \calpha\in(0,\delta)\cup(1-\delta,1).$$
For $\delta \in [\delta,1-\delta]$, the following function is continuous
\begin{equation}
\mathcal{J}^{m}_{w,r}:\calpha\mapsto \frac{\cO(0)^2+\cO(1)^2}{\norm{\cO}_{0,\cI}^2} \label{eqn:definition_of_J}
\end{equation}
because both of its numerator and denominator are rational functions of $\calpha$ and the denominator is not zero.
Therefore, there is $C_2(m,w,r)>0$ such that
$$\sqrt{\cO(0)^2+\cO(1)^2}\ge C_2(m,w,r)\norm{\cO}_{0,\cI},\qquad \calpha\in[\delta,1-\delta].$$
By letting $C(m,w,r)=\min(C_1(m,r),C_2(m,w,r))$, we know that $\cO$ satisfies inequality \eqref{eq:norm_on_Q} stated in Lemma \ref{lem:norm_on_Q}.
Consequently, the estimates in \eqref{eq:norm_on_Q} of Lemma \ref{lem:norm_on_Q} is true for every function in $\cQ$ because it is a 
one-dimensional space, and Lemma \ref{lem:norm_on_Q} is proven. 
    
   \section{Proof of \autoref{lem:bounds_on_Hermite}}
   \label{sec:Hermite_proof}
   Let us start with $p=L^0_{\calpha,r}$, we have
    $$p(0)=1,\quad p(1)=0,\quad p'(0)=0,\quad p'(1)=0.$$
    By \autoref{thm:Zeros of IFE basis functions}, $p'$ does not change sign in $(0,1)$ since $p'\in \V^{2}_{\calpha,\tau(r)}(\cI)$ and $p'(0)=
p'(1)=0$. Therefore, $p$ is monotonically decreasing from $p(0)=1$ to $p(1)=0$. The same argument applies to $L^1_{\calpha,r}$.

    Next, we show that $q=L^2_{\calpha,r}$ is bounded between $0$ and $1$. We have $$q(0)=0,\quad q(1)=0,\quad q'(0)=1,\quad q'(1)=0.$$
    By Rolle's theorem, there is $c\in (0,1)$ such that $q'(c)=0$. By \autoref{thm:Zeros of IFE basis functions}, $c$ the only root of $q'$ in $(0,1)$. Now, by the generalized Rolle's theorem, there is $d\in (c,1)$ such that $q''(d^-)q''(d^+) \leq 0$. If $d \not= \calpha$, then $q''(d^-) = q''(d^+) = q''(d)$ because $q$ is a polynomial on either sides of $\calpha$. In this case we have $q''(d) = 0$. If $d = \calpha$, then $q''(d^-)q''(d^+) \leq 0$ and jump condition implies
\begin{align*}
\frac{\beta^-}{\beta^+} \big(q''(\calpha^-)\big)^2 \leq 0
\end{align*}    
from which we have $q''(\calpha^-) = 0 = q''(\calpha^+)$. Hence, $q''(d)=0$. Furthermore, by \autoref{thm:Zeros of IFE basis functions}, $d$ is the only root of $q''$ since $q''\in \V^1_{\calpha,\tau^2(r)}(\cI)$. Since $q''$ is a linear polynomial on either sides of $\calpha$, the jump condition satisfied by $q$ further implies that $q''$ does not change its sign $(0, d)$ and $(d, 1)$. 
Because $q'(0)=1$ and $q'(c)=0$ and $0 < c < d$, we know that $q'$ is decreasing on $(0, d)$ but increasing on $(d, 1)$. These further imply $q'(x) \in [0, 1]$ for $x \in [0, c]$ and $q'(x) \leq 0$ for $x \in [c, d]$; hence, $q(x) \leq q(c)$ for all $x \in [0, d]$. Furthermore, since $q'(d) \leq 0, q'(1) = 0$ and $q'$ is monotonic on $[d, 1]$, we know that $q'(x) \leq 0$ for all $x \in [d, 1]$. Hence, $0 = q(1) \leq q(x) \leq q(d) \leq q(c)$ for all $x \in [d, 1]$. Consequently, $q(c) \geq q(x)$ for $x \in [0, 1]$. 
In addition, since $q$ has no local minimum point on $(0, d)$, we have $q(x) \geq \min\{q(0), q(d)\} \geq 0$ for all $x \in [0, d]$. 
Thus, $q(x) \geq 0 ~\forall x \in [0, 1]$. On the other hand, 
\begin{eqnarray*}
q(x) \leq q(c) = \int_0^c q'(x)dx\le \int_0^c 1dx=c<1 ~~\forall x \in [0, 1],
\end{eqnarray*}
The last two estimates lead us to conclude that $0 \leq q(x) = L^2_{\calpha,r}(x) < 1$. As for $L^3_{\calpha,r}$, we note 
that
\begin{eqnarray*}
L^{3}_{\calpha,r}(x)=-L^{2}_{1-\calpha,\tilde{r}}(1-x),\qquad \text{where } \tilde{r}= \{r_{i}^{-1}\}_{i=0}^3
\end{eqnarray*}
which leads to $L^3_{\calpha,r}(x)\in [-1,0]$.

\commentout{
        \begin{lemma}
            Let $\beta^\pm>0$ and $\calpha \in(0,1)$, then

            $$0<L^i_{\calpha,r}(x)<1,\qquad \forall\ x\in[0,1],\quad i=0,1,2,3,$$
            where $L^i_{\calpha,r}$ is defined in \eqref{eqn:Hermite_basis}.
        \end{lemma}

        \begin{proof}
            Let us start with $p=L^0_{\calpha,r}$, we have
            $$p(0)=1,\quad p(1)=0,\quad p'(0)=0,\quad p'(1)=0.$$
            By \autoref{thm:Zeros of IFE basis functions}, $p'$ does not change sign in $(0,1)$ since $p'\in \V^{2}_{\calpha,\tau(r)}(\cI)$ and $p'(0)=p'(1)=0$. Therefore, $p$ is monotonically decreasing from $p(0)=1$ to $p(1)=0$. The same argument applies to $L^1_{\calpha,r}$.

             Next, we show that $q=L^2_{\calpha,r}$ is bounded between $0$ and $1$. We have $$q(0)=0,\quad q(1)=0,\quad q'(0)=1,\quad q'(1)=0.$$
            By Rolle's theorem, there is $c\in (0,1)$ such that $q'(c)=0$. By \autoref{thm:Zeros of IFE basis functions}, $c$ the only root of $q'$ in $(0,1)$. Therefore
            \begin{itemize}
                \item $q(c)$ is the  maximum of $q$ on $[0,1]$.
                \item $q'(x)\in [0,1]$ for $x\in [0,c]$.
                \item $q(x)> 0$ on $(0,1)$ (Because $q$ is decreasing on $[c,1]$ from $q(c)$ to $0$).

            \end{itemize}
            Using these observations and $$q(c)=\int_0^c q'(x)dx\le \int_0^c 1dx=c<1,$$
            we conclude that $q(x)\in [0,1]$ for all $x\in[0,1]$. The same argument can be applied to $L^3_{\calpha,r}$.

        \end{proof}
}

\end{appendices}

\bibliographystyle{siam}

\begin{thebibliography}{10}

\bibitem{adjeridAsymptoticallyExactPosteriori2010}
{\sc S.~Adjerid and M.~Baccouch}, {\em Asymptotically exact a posteriori error
  estimates for a one-dimensional linear hyperbolic problem}, Applied Numerical
  Mathematics, 60 (2010), pp.~903--914.

\bibitem{adjeridHigherDegreeImmersed2018}
{\sc S.~Adjerid, M.~{Ben-Romdhane}, and T.~Lin}, {\em Higher degree immersed
  finite element spaces constructed according to the actual interface},
  Computers \& Mathematics with Applications, 75 (2018), pp.~1868--1881.

\bibitem{adjeridImmersedDiscontinuousFinite2019}
{\sc S.~Adjerid, N.~Chaabane, T.~Lin, and P.~Yue}, {\em An immersed
  discontinuous finite element method for the {{Stokes}} problem with a moving
  interface}, Journal of Computational and Applied Mathematics, 362 (2019),
  pp.~540--559.

\bibitem{adjeridHIGHDEGREEIMMERSED2017}
{\sc S.~Adjerid, R.~Guo, and T.~Lin}, {\em {{High degree immersed finite
  element spaces by a least squares method}}}, International Journal of
  Numerical Analysis And Modeling, 14 (2017), pp.~604--625.

\bibitem{adjeridPthDegreeImmersed2009}
{\sc S.~Adjerid and T.~Lin}, {\em A {{P-th}} degree immersed finite element for
  boundary value problems with discontinuous coefficients}, Applied Numerical
  Mathematics, 59 (2009), pp.~1303--1321.

\bibitem{adjeridErrorEstimatesImmersed2020}
{\sc S.~Adjerid, T.~Lin, and Q.~Zhuang}, {\em Error estimates for an immersed
  finite element method for second order hyperbolic equations in inhomogeneous
  media}, Journal of Scientific Computing, 84 (2020).

\bibitem{adjeridHigherOrderImmersed2014}
{\sc S.~Adjerid and K.~Moon}, {\em A {{Higher Order Immersed Discontinuous
  Galerkin Finite Element Method}} for the {{Acoustic Interface Problem}}}, in
  Advances in {{Applied Mathematics}}, Springer {{Proceedings}} in
  {{Mathematics}} \& {{Statistics}}, {Cham}, 2014, {Springer International
  Publishing}, pp.~57--69.

\bibitem{barbuDifferentialEquations2016}
{\sc V.~Barbu}, {\em Differential {{Equations}}}, Springer {{Undergraduate
  Mathematics Series}}, {Springer International Publishing}, 2016.

\bibitem{benzoni-gavageMultidimensionalHyperbolicPartial2006}
{\sc S.~{Benzoni-Gavage} and D.~Serre}, {\em Multi-Dimensional Hyperbolic
  Partial Differential Equations: {{First-order}} Systems and Applications},
  {Oxford University Press}, Nov. 2006.

\bibitem{brambleEstimationLinearFunctionals1970}
{\sc J.~H. Bramble and S.~R. Hilbert}, {\em Estimation of {{linear
  functionals}} on {{Sobolev spaces}} with {{application}} to {{Fourier
  transforms}} and {{spline interpolation}}}, SIAM Journal on Numerical
  Analysis, 7 (1970), pp.~112--124.

\bibitem{brennerPolynomialApproximationTheory1994}
{\sc S.~C. Brenner and L.~R. Scott}, {\em Polynomial {{Approximation Theory}}
  in {{Sobolev Spaces}}}, in The {{Mathematical Theory}} of {{Finite Element
  Methods}}, S.~C. Brenner and L.~R. Scott, eds., Texts in {{Applied
  Mathematics}}, {Springer}, {New York, NY}, 1994, pp.~91--122.

\bibitem{caoSuperconvergenceImmersedFinite2017}
{\sc W.~Cao, X.~Zhang, and Z.~Zhang}, {\em Superconvergence of immersed finite
  element methods for interface problems}, Advances in Computational
  Mathematics, 43 (2017), pp.~795--821.

\bibitem{2021ChengZhang}
{\sc Y.~Chen and X.~Zhang}, {\em A ${P}_2$-${P}_1$ partially penalized immersed
  finite element method for stokes interface problems}, International Journal
  of Numerical Analysis and Modeling, 18 (2021), pp.~120--141.

\bibitem{ciarletFiniteElementMethod2002}
{\sc P.~G. Ciarlet}, {\em The {{Finite Element Method}} for {{Elliptic
  Problems}}}, Classics in {{Applied Mathematics}}, {Society for Industrial and
  Applied Mathematics}, Jan. 2002.

\bibitem{cockburnIntroductionDiscontinuousGalerkin1998}
{\sc B.~Cockburn}, {\em An introduction to the {{Discontinuous Galerkin}}
  method for convection-dominated problems}, in Advanced {{Numerical
  Approximation}} of {{Nonlinear Hyperbolic Equations}}, Lecture {{Notes}} in
  {{Mathematics}}, {Springer}, {Berlin, Heidelberg}, 1998, pp.~150--268.

\bibitem{guoHigherDegreeImmersed2019}
{\sc R.~Guo and T.~Lin}, {\em A higher degree immersed finite element method
  based on a {{Cauchy}} extension}, Siam Journal On Numerical Analysis, 57
  (2019), pp.~1545--1573.

\bibitem{guoImmersedFiniteElement2020}
\leavevmode\vrule height 2pt depth -1.6pt width 23pt, {\em An immersed finite
  element method for elliptic interface problems in three dimensions}, Journal
  of Computational Physics, 414 (2020), p.~109478.

\bibitem{X.He_T.Lin_Y.Lin}
{\sc X.~He, T.~Lin, and Y.~Lin}, {\em Approximation capability of a bilinear
  immersed finite element space}, Numerical Methods for Partial Differential
  Equations, 24 (2008), pp.~1265--1300.

\bibitem{heImmersedFiniteElement2013}
{\sc X.~He, T.~Lin, Y.~Lin, and X.~Zhang}, {\em Immersed finite element methods
  for parabolic equations with moving interface}, Numerical Methods for Partial
  Differential Equations, 29 (2013), pp.~619--646.

\bibitem{2021JonesZhang}
{\sc D.~Jones and X.~Zhang}, {\em A class of nonconforming immersed finite
  element methods for {Stokes} interface problems}, Journal of Computational
  and Applied Mathematics, 392 (2021).

\bibitem{liImmersedInterfaceMethod1998}
{\sc Z.~Li}, {\em The immersed interface method using a finite element
  formulation}, Applied Numerical Mathematics, 27 (1998), pp.~253--267.

\bibitem{liImmersedFiniteElement2004}
{\sc Z.~Li, T.~Lin, Y.~Lin, and R.~Rogers}, {\em An immersed finite element
  space and its approximation capability}, Numerical Methods for Partial
  Differential Equations, 20 (2004), pp.~338--367.

\bibitem{liNewCartesianGrid2003b}
{\sc Z.~Li, T.~Lin, and X.~Wu}, {\em New cartesian grid methods for interface
  problems using finite element formulation}, Numerische Mathematik, 96 (2003),
  pp.~61--98.

\bibitem{linErrorAnalysisImmersed2017}
{\sc M.~Lin, T.~Lin, and H.~Zhang}, {\em Error analysis of an immersed finite
  element method for {{Euler-Bernoulli}} beam interface problems},
  International Journal of Numerical Analysis And Modeling, 14 (2017),
  pp.~822--841.

\bibitem{linRectangularImmersedFinite2001}
{\sc T.~Lin, Y.~Lin, R.~Rogers, and M.~L. Ryan}, {\em A rectangular immersed
  finite element space for interface problems}, Adv. Comput. Theory Pract, 7
  (2001), pp.~107--114.

\bibitem{linImmersedFiniteElement2011}
{\sc T.~Lin, Y.~Lin, W.-W. Sun, and Z.~Wang}, {\em Immersed finite element
  methods for 4th order differential equations}, Journal of Computational and
  Applied Mathematics, 235 (2011), pp.~3953--3964.

\bibitem{linImmersedFiniteElement2013}
{\sc T.~Lin, Y.~Lin, and X.~Zhang}, {\em Immersed finite element method of
  lines for moving interface problems with nonhomogeneous flux jump}, Vertex
  Operator Algebras and Related Areas, 586 (2013), pp.~257--265.

\bibitem{2019LinLinZhuan_Helmholtz1}
{\sc T.~Lin, Y.~Lin, and Q.~Zhuang}, {\em Solving interface problems of the
  {Helmholtz} equation by immersed finite element methods}, Communications on
  Applied Mathematics and Computation, 1 (2019), p.~187–206.

\bibitem{linOptimalErrorBounds2020}
{\sc T.~Lin and Q.~Zhuang}, {\em Optimal error bounds for partially penalized
  immersed finite element methods for parabolic interface problems}, Journal of
  Computational and Applied Mathematics, 366 (2020).

\bibitem{lombardModelisationNumeriquePropagation2002}
{\sc B.~Lombard}, {\em {Mod\'elisation num\'erique de la propagation des ondes
  acoustiques et \'elastiques en pr\'esence d'interfaces}}, PhD thesis,
  Universit\'e de la M\'editerran\'ee - Aix-Marseille II, Jan. 2002.

\bibitem{Lombard_Piraux_2001_1D}
{\sc B.~Lombard and J.~Piraux}, {\em A new interface method for hyperbolic
  problems with discontinuous coefficients: one-dimensional acoustic example},
  Journal of Computational Physics, 1168 (2001), pp.~227--248.

\bibitem{moonImmersedDiscontinuousGalerkin2016}
{\sc K.~Moon}, {\em Immersed {{Discontinuous Galerkin Methods}} for {{Acoustic
  Wave Propagation}} in {{Inhomogeneous Media}}}, Ph.D. thesis, Virginia Tech,
  (2016).

\bibitem{SVallaghe_TPapadopoulo_TriLinear_IFE}
{\sc S.~Vallagh\'{e} and T.~Papadopoulo}, {\em A trilinear immersed finite
  element method for solving the electroencephalography forward problem}, SIAM
  J. Sci. Comput., 32 (2010), pp.~2379--2394.

\bibitem{2022WangZhangZhuang}
{\sc J.~Wang, X.~Zhang, and Q.~Zhuang}, {\em An immersed crouzeix-raviart
  finite element method for navier-stokes interface problems. an immersed
  crouzeix-raviart finite element method for navier-stokes interface
  problems.}, International Journal of Numerical Analysis \& Modeling, 19
  (2022), pp.~563--586.

\bibitem{wangHermiteCubicImmersed2005}
{\sc T.~S. Wang}, {\em A {{Hermite}} Cubic Immersed Finite Element Space for
  Beam Design Problems}, PhD thesis, Virginia Tech, 2005.

\bibitem{yangAnalysisOptimalSuperconvergence2012}
{\sc Y.~Yang and C.-W. Shu}, {\em Analysis of optimal superconvergence of
  discontinuous galerkin method for linear hyperbolic equations.}, SIAM Journal
  on Numerical Analysis, 50 (2012), pp.~3110--3133.

\end{thebibliography}

\end{document}